\newtheorem{theorem}{Theorem}
\newtheorem{corollary}{Corollary}
\newtheorem{lemma}{Lemma}
\newtheorem{observation}{Observation}
\theoremstyle{remark} 
\def\:{{\scalebox{1}[.75]{$\, \vphantom{\int^0}\smash[t]{\vdots} \,$}}}
\newcommand{\Carol}[1]{{\color{cyan} Carol: #1}}
\begin{document}

\title
{$K_2$-Hamiltonian Graphs: II}
\author{{\sc Jan GOEDGEBEUR\footnote{Department of Computer Science, KU Leuven Campus Kulak-Kortrijk, 8500 Kortrijk, Belgium}\;\footnote{Department of Applied Mathematics, Computer Science and Statistics, Ghent University, 9000 Ghent, Belgium}\;,
Jarne RENDERS\footnotemark[1]\;,
G\'abor WIENER\footnote{Department of Computer Science and Information Theory, 
Faculty of Electrical Engineering and Informatics, Budapest University of 
Technology and Economics, Budapest, Hungary}\;, }\\[1mm]
and {\sc Carol T. ZAMFIRESCU\footnotemark[2]\;\footnote{Department of Mathematics, Babe\c{s}-Bolyai University, Cluj-Napoca, Roumania}}\;\footnote{E-mail addresses: jan.goedgebeur@kuleuven.be; jarne.renders@kuleuven.be;  wiener@cs.bme.hu; czamfirescu@gmail.com}}

\date{}

\maketitle
\begin{center}
\begin{minipage}{125mm}
{\bf Abstract.} In this paper we use theoretical and computational tools to continue our investigation of \emph{$K_2$-hamiltonian} graphs, that is, graphs in which the removal of any pair of adjacent vertices yields a hamiltonian graph, and their interplay with \emph{$K_1$-hamiltonian} graphs, that is, graphs in which every vertex-deleted subgraph is hamiltonian. Perhaps surprisingly, there exist graphs that are both $K_1$- and $K_2$-hamiltonian, yet non-hamiltonian, for example, the Petersen graph. Gr\"unbaum conjectured that every planar $K_1$-hamiltonian graph must itself be hamiltonian; Thomassen disproved this conjecture. Here we show that even planar graphs that are both $K_1$- and $K_2$-hamiltonian need not be hamiltonian, and that the number of such graphs grows at least exponentially.
Motivated by results of Aldred, McKay, and Wormald, we determine for every 
integer $n$ that is not 14 or 17 whether 
there exists a \textit{$K_2$-hypohamiltonian}, that is, non-hamiltonian and 
$K_2$-hamiltonian, graph of order~$n$, and 
characterise all orders for which such cubic graphs and such snarks 
exist. We also describe the smallest cubic planar graph which is 
$K_2$-hypohamiltonian,
as well as the smallest planar $K_2$-hypohamiltonian graph of girth $5$. 
We conclude with open problems and by correcting two inaccuracies from the 
first article.

\bigskip

{\bf Keywords.} hamiltonian cycle, hypohamiltonian,
snark, dot product, exhaustive generation

\bigskip

\textbf{MSC 2020.} 05C45, 05C38, 05C10, 05C85, 05C76

\end{minipage}
\end{center}

\vspace{1cm}

\section{Introduction}

We shall call a graph \emph{$K_1$-hamiltonian} if all of its vertex-deleted subgraphs are hamiltonian and \emph{$K_2$-hamiltonian} if the removal of any pair of adjacent vertices yields a hamiltonian graph. Non-hamiltonian $K_1$-hamiltonian graphs are called \emph{hypohamiltonian} and, in the same vein, we say that a graph is \emph{$K_2$-hypohamiltonian} if it is $K_2$-hamiltonian yet non-hamiltonian. Here we will mainly focus on properties of $K_2$-hypohamiltonian graphs, but we will also discuss their interplay with hamiltonicity and $K_1$-hamiltonicity. This paper is a continuation of~\cite{Za21}; for a motivation of our interest in $K_2$-hamiltonian graphs we refer to that article's introduction, and references therein.

An $n$-vertex $K_2$-hamiltonian graph may contain a hamiltonian cycle but no
$(n-1)$-cycle (e.g.\ a $d$-dimensional cube for $d \ge 3$), but it may also
contain no hamiltonian cycle yet be $K_1$-hamiltonian, e.g.\ the Petersen graph.
As we shall see, the Petersen graph is in fact the smallest $K_2$-hypohamiltonian
graph. Coxeter's graph, another famous hypohamiltonian graph, is not
$K_2$-hamiltonian. Furthermore, every planar 4-connected graph is hamiltonian,
$K_1$-hamiltonian, $K_2$-hamiltonian, and indeed even \emph{$K_3$-hamiltonian}
(i.e.\ the removal of any triangle yields a hamiltonian graph) by work of
Tutte~\cite{Tu56}, Nelson~\cite{Ne73}, Thomas and Yu~\cite{TY94}, and
Sanders~\cite{Sa96}. The reader might wonder if a $K_2$-hamiltonian graph on
$n$~vertices and containing neither $n$- nor $(n-1)$-cycles exists;
this is a good question to which we do not know the answer. It is a special
but interesting case of a conjecture of Gr\"unbaum~\cite{Gr74} which states 
that for any integer $k \ge 2$,
there exists no graph whose order and circumference differ by $k$ and in which
any set of $k$ vertices is avoided by some longest cycle. A relaxation of this
problem of Gr\"unbaum is also still open: Katona, Kostochka, Pach, and
Stechkin~\cite{KKPS89} asked whether an $n$-vertex graph in which any $n - 2$
vertices induce a hamiltonian graph, must itself be hamiltonian. (In fact,
Katona et al.\ had raised this problem far more generally; one obtains their
original question by replacing ``$n-2$'' with $k$ for $n/2 < k < n - 1$, where
$n$ denotes the graph's order.) 
Van Aardt, Burger, Frick, Llano, and Zuazua raised in~\cite{ABFLZ14} an 
equivalent problem; see their Question~1.

It follows from a result of Thomason~\cite{Th78-2} that if a cubic graph contains a vertex-deleted subgraph which has an odd number of hamiltonian cycles, then the graph itself must be hamiltonian; and that if a cubic graph has an odd number of hamiltonian cycles, it is $K_1$-hamiltonian. The first statement is not true if we replace $K_1$ by $K_2$: simply take the Petersen graph. It is cubic and non-hamiltonian, but removing adjacent vertices yields a graph containing exactly one hamiltonian cycle. The second statement's $K_2$-variant is not true either: take any graph obtained from $K_4$ by subsequently replacing vertices by triangles. The resulting graph contains exactly three hamiltonian cycles, but also a triangle whose vertices are cubic, so it cannot be $K_2$-hamiltonian (see~\cite[Proposition 1]{Za21}). 

In Section~\ref{sect:variations_grunbaum} we address, as announced 
in~\cite{Za21}, the natural question whether planar $K_2$-hypohamiltonian 
graphs exist. Gr\"unbaum had conjectured~\cite{Gr74} that every planar 
$K_1$-hamiltonian graph is hamiltonian, but Thomassen~\cite{Th76} disproved 
this conjecture. On the other hand, in a subsequent paper, Thomassen showed 
that every planar $K_1$-hamiltonian graph \emph{without cubic vertices} is 
hamiltonian~\cite{Th78}, thereby giving an elegant amendment of Gr\"unbaum's 
conjecture so that it does hold. It is natural to wonder about other ways in 
which this conjecture can be ``saved''. We will prove that by replacing $K_1$ 
with $K_2$ one \emph{cannot} save the conjecture, i.e.\ it is not true that 
every planar graph in which every $K_2$-deleted subgraph is hamiltonian, must 
itself be hamiltonian. Moreover, as we shall see, in a planar graph not even 
the hamiltonicity of every $K_1$-deleted \emph{and} $K_2$-deleted subgraph 
necessarily implies the hamiltonicity of the graph itself.
In fact, we will even show that the number of planar hypohamiltonian 
$K_2$-hamiltonian graphs grows at least exponentially, giving an alternative 
proof of a result of Collier and Schmeichel \cite{CS77}.

In Section~\ref{sect:orders} we determine all $K_2$-hypohamiltonian graphs up to a given order for specific classes of graphs (i.e.\ general graphs, cubic graphs, snarks, planar graphs, cubic planar graphs) and characterise the orders for which they exist using a combination of computational methods and theoretical arguments.

Finally, in Section~\ref{sect:last}, we conclude the paper with an erratum 
concerning \cite{Za21} and discuss potential future directions of research.

We end this section with the introduction of definitions and notations that will be used throughout the remainder of the article. We refer to~\cite{graph_theory_diestel} for any standard graph theory terminology which is not explicitly defined, but note that here we write $K_n$ (not $K^n$) for the $n$-vertex complete graph.

For a set $S$, we say that $A, B \subset S$ \emph{partition} $S$ if $A \cap B 
= 
\emptyset$ and $A \cup B = S$. In a graph $G$, consider $S \subset V(G)$. 
Then 
the \emph{induced subgraph} $G[S]$ is the graph whose vertex set is $S$ and 
whose edge set consists of all of the edges in $E(G)$ with both endpoints in 
$S$. Let $G$ be a non-complete graph of connectivity $k$ and order greater 
than 
$k$, $X$ a $k$-cut in $G$, and $C$ a component of $G - X$. Then $G[V(C) \cup 
X]$ is an \emph{$X$-fragment} of $G$ with \emph{attachments} $X$. An 
$X$-fragment is \emph{trivial} if it contains exactly $|X| + 1$ vertices. 
Sometimes we wish to emphasise the cardinality of the set $X$ of attachments, 
so we also say that $G[V(C) \cup X]$ is an \emph{$|X|$-fragment}. Let $F,F'$ 
be 
disjoint 3-fragments of graphs of connectivity~3, and let $F$ have 
attachments 
$x_1,x_2,x_3$ and $F'$ have attachments $x'_1,x'_2,x'_3$. Identifying $x_i$ 
with $x'_i$ for all $i$, we obtain the graph $(F, \{ x_1,x_2,x_3 \}) \: (F', 
\{ 
x'_1,x'_2,x'_3 \})$. If each of two disjoint fragments admits a planar 
embedding with cofacial attachments, it is easy to see that the 
identifications 
from the aforementioned operation $\:$ can be applied such that the resulting 
graph is planar, as well.
 A cut $X$ of $G$ is \emph{trivial} if $G - X$ has exactly two components, 
 one 
 of which is $K_1$. A path with end-vertex $v$ is a \emph{$v$-path}, and a 
 $v$-path with end-vertex $w \ne v$ is a \emph{$vw$-path}.

\section{Variations of a conjecture of Gr\"unbaum}
\label{sect:variations_grunbaum}

Motivated by Gr\"unbaum's conjecture that every planar $K_1$-hamiltonian graph 
is hamiltonian~\cite{Gr74} and Thomassen's elegant way to save it~\cite{Th78}, 
we now give two proofs 
of the fact that even if in a planar graph the deletion of any single vertex \emph{and} the deletion of any pair of adjacent vertices yields a hamiltonian graph, the presence of a hamiltonian cycle is not guaranteed---a property shared by the Petersen graph. We give infinitely many examples, and note that they in fact satisfy the stronger property that any complete graph we remove, we get a hamiltonian graph (simply because they contain no $K_t$ for all integers $t \ge 3$). For our first proof we introduce a new method inspired by an old idea of Chv\'atal---his so-called \emph{flip-flop} graphs~\cite{Ch73}---while the second proof is based on a lemma from the first article in this series~\cite{Za21}. This second proof is shorter, but the first one builds a framework which we believe could be useful in attacking related problems concerning longest paths and longest cycles, in particular regarding graphs of minimum degree 4 and maybe even the notoriously difficult 4-connected case.

Let $G$ be a graph and let $a,b,c,d \in V(G)$ be pairwise distinct. The pair of vertices $(a,b)$ is said to be \emph{good} if there is a hamiltonian $ab$-path in $G$, and the pair of pairs of vertices $((a,b),(c,d))$ is \emph{good} if there exist an $ab$-path and a $cd$-path in $G$ whose vertex sets partition $V(G)$. Otherwise, in either case, the pair is said to be \emph{bad}. The quintuple $(G,a,b,c,d)$ is a \emph{cell} if $G$ is a graph and $a,b,c,d \in V(G)$ are pairwise distinct. These four vertices shall be called \emph{outer vertices} and every vertex that is not an outer vertex is an \emph{inner vertex}. A cell is \emph{suitable} if all of the following properties hold.


\smallskip

\noindent 1.1. The pairs $(a,b), (a,c), (b,d)$, and $(c,d)$ are good.\\[1mm]
1.2. The pairs $(a,d)$ and $(b,c)$ are bad.\\[1mm]
1.3. The pair of pairs $((a,b),(c,d))$ is good.\\[1mm]
1.4. The pairs of pairs $((a,d),(b,c))$ and $((a,c),(b,d))$ are bad.\\[1mm]
1.5(a). For any outer vertex $v$, the pairs $(a,b), (a,c), (b,d)$, and  $(c,d)$ which do not contain $v$ as one of the vertices are bad in $G - v$.\\[1mm]
1.5(b). For any outer vertex $v$, the pairs $(a,d)$ and $(b,c)$ which do not contain $v$ as one of the vertices are bad in $G - v$.\\[1mm]
1.6. Deleting any of the pairs of vertices $(a,c)$, $(a,d)$, $(b,c)$, $(b,d)$, the remaining pair of outer vertices is good in the resulting graph (e.g.~the pair $(b,d)$ is good in $G-a-c$, etc.).


\smallskip

Property 1.5 states that, for any outer vertex $v$, all pairs of outer vertices are bad in $G - v$. The distinction between cases (a) and (b) is needed in the following definitions. A suitable cell $(G,a,b,c,d)$ is a \emph{$K_1$-cell} if by deleting any inner vertex of $G$ we obtain a graph in which at least one of the bad pairs stated in Properties 1.2, 1.4, 1.5(a) becomes good. A suitable cell $(G,a,b,c,d)$ is a \emph{$K_2$-cell} if all of the following properties hold.

\smallskip

\noindent 2.1. By deleting any two neighbouring inner vertices of $G$ we obtain a graph in which at least one of the bad pairs stated in Properties 1.2, 1.4, 1.5(a) becomes good.\\[1mm]
2.2. If $x \in N(a)$, then $(b,c)$ is good in $G-a-x$.\\[1mm]
2.3. If $x \in N(d)$, then $(b,c)$ is good in $G-d-x$.\\[1mm]
2.4. If $x \in N(b)$, then $(a,d)$ is good in $G-b-x$.\\[1mm]
2.5. If $x \in N(c)$, then $(a,d)$ is good in $G-c-x$.

\smallskip

It is far from obvious whether $K_1$- or $K_2$-cells actually exist; we shall later see examples of $K_1$- and $K_2$-cells, and even cells that are both $K_1$- and $K_2$-cells. On the other hand, it is easy to verify that the underlying graph of a suitable cell must have at least five vertices. 

Let $k \ge 3$ be a fixed but arbitrary integer. For $i=0, \ldots, k-1$, 
consider pairwise disjoint cells $H_i=(G_i,a_i,b_i,c_i,d_i)$. Henceforth, 
indices are taken mod~$k$. For a fixed but arbitrary $j \in\{1, 2\}$, if 
$H_i$ is a $K_j$-cell for every $i$, then the graph $\Gamma_j^k$ is defined 
as 
$\bigcup_i G_i$ in which we identify $b_i$ with $a_{i+1}$ and $c_i$ with 
$d_{i+1}$, for $i=0, \ldots, k-1$. Note that if $H_i$ admits a planar 
embedding 
in which $a,b,c,d$ occur in the same facial cycle and in this order, for 
$i=0, 
\ldots, k-1$, then the aforementioned identifications can be performed such 
that $\Gamma_j^k$ is planar, as well.
Put $V_i := V(G_i)$, $E_i := E(G_i)$, and $P_i := \{ a_i, b_i, c_i, d_i \}$; 
vertices in $P_i$ are \emph{outer vertices} (of $G_i$), while vertices in 
$V_i 
\setminus P_i$ are \emph{inner vertices} (of $G_i$).

\begin{theorem}\label{thm:gamma}
For every odd integer $k \ge 3$ the graph $\Gamma_1^k$ is hypohamiltonian and the graph $\Gamma_2^k$ is $K_2$-hypohamiltonian.
\end{theorem}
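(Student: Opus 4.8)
The plan is to analyse the cyclic structure $\Gamma_j^k$ by understanding how a hamiltonian cycle — or a hamiltonian cycle of a vertex-deleted or pair-deleted subgraph — must interact with each cell $G_i$. The key observation is that the attachments between consecutive cells form $2$-cuts: identifying $b_i$ with $a_{i+1}$ and $c_i$ with $d_{i+1}$ means the pair $\{b_i,c_i\}=\{a_{i+1},d_{i+1}\}$ separates $V_i\setminus\{b_i,c_i\}$ from the rest. Hence any cycle of $\Gamma_j^k$ that meets the interior of $G_i$ must enter and leave $G_i$ through these cut-pairs, and so within $G_i$ it traces out either a single path, or two disjoint paths, whose endpoints lie among $\{a_i,b_i,c_i,d_i\}$. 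First I would set up this dictionary precisely: a spanning subgraph of $\Gamma_j^k$ that is a union of paths/cycles, restricted to a cell, corresponds to one of the "pair" or "pair of pairs" configurations in Properties 1.1–1.6. The goodness/badness conditions then translate cycle-existence questions in $\Gamma_j^k$ into purely combinatorial questions about which sequences of cell-configurations can be patched together around the $k$-cycle of cells.

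Second, I would prove non-hamiltonicity of $\Gamma_j^k$ for odd $k$. A hamiltonian cycle of $\Gamma_j^k$ induces, in each $G_i$, either a single spanning $xy$-path with $x,y\in P_i$, or two disjoint spanning paths whose four endpoints are exactly $P_i$ (it cannot avoid the interior of any cell since it is spanning, and it cannot use all four attachments as "pass-through" points in a way incompatible with Property 1.5 applied globally — here I would use 1.1–1.4 to enumerate the admissible local patterns). Tracking how the cycle crosses each $2$-cut $\{b_i,c_i\}$, one gets a consistency condition around the cyclic sequence of cells; Properties 1.2 and 1.4 forbid exactly the "straight-through" patterns $a_id_i$-type and the crossing pair-of-pairs, which forces an alternation that, as in Chvátal's flip-flop construction, closes up only when $k$ is even. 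For odd $k$ the parity obstruction kills every candidate hamiltonian cycle. This is where the analogy with flip-flop graphs does the real work, and it is the part I expect to be the main obstacle: one must be careful that the local case analysis in a single cell is genuinely exhaustive, because a spanning union of paths in $G_i$ could a priori have endpoints configured in several ways, and Properties 1.1–1.6 are designed precisely so that only the "good" transitions survive and they chain up with the wrong parity.

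Third, for the positive direction — that deleting a vertex (in the $\Gamma_1^k$ case) or a pair of adjacent vertices (in the $\Gamma_2^k$ case) yields a hamiltonian graph — I would argue that the deleted vertex/vertices lie in some cell $G_{i_0}$ (an attachment vertex lies in two consecutive cells, and one can pick either), and that the remaining cells $G_i$ with $i\ne i_0$ each still admit their good configurations from 1.1–1.3 and 1.6, while the punctured cell $G_{i_0}$ now, by the defining property of a $K_1$-cell (resp.\ Properties 2.1–2.5 of a $K_2$-cell), has one of the previously-bad patterns become good. The crucial point is that exactly this newly-available "bad-turned-good" pattern is the one of opposite parity, so it repairs the parity mismatch and lets the global cycle close up. One must handle separately the cases where the deleted vertices are inner versus where one or both are attachments: when an attachment $b_i=a_{i+1}$ is deleted the two neighbouring cells merge their boundary behaviour, and Properties 1.5 and 1.6 (resp.\ 2.2–2.5, which govern deletions adjacent to outer vertices) are exactly what guarantee a spanning path through the merged region. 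Assembling these local paths along the $k$-cycle of cells then yields the desired hamiltonian cycle of the deleted graph, completing the proof; if $\Gamma_j^k$ is built from planar cells with cofacial outer vertices in the stated cyclic order, planarity is preserved by the earlier remark on the operation $\:$, though that is not needed for this theorem itself.
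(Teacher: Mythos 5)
Your overall strategy is the paper's strategy (restrict a hypothetical hamiltonian cycle to each cell, show the admissible local patterns must alternate around the cycle of cells and hence force $k$ even; then use the $K_1$-/$K_2$-cell properties to patch together hamiltonian cycles after deletions), but the non-hamiltonicity half, which you yourself flag as the main obstacle, has a genuine gap as sketched. First, your enumeration of local patterns is not exhaustive: since an outer vertex $b_i=a_{i+1}$ can have both of its cycle-edges inside the neighbouring cell, the restriction ${\frak h}[V_i]$ may contain isolated outer vertices, so besides ``one spanning path'' and ``two spanning paths with endpoints $P_i$'' you must also handle the configurations with one or two isolated outer vertices; the paper's alternation is precisely between ``path plus two isolated outer vertices'' and ``single spanning path'', and the one-isolated-vertex configuration is killed only by invoking Property~1.5. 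Second, and more seriously, your claim that Properties~1.2 and~1.4 forbid the problematic patterns does not dispose of the two-path configuration: Property~1.4 only excludes the pairs of pairs $((a,d),(b,c))$ and $((a,c),(b,d))$, while the pair $((a,b),(c,d))$ is \emph{good} by Property~1.3, so a cell split into an $a_ib_i$-path and a $c_id_i$-path is locally perfectly admissible. It can only be excluded globally: if every cell were of this type, ${\frak h}$ would decompose into two disjoint cycles rather than one hamiltonian cycle, and if some cell is of this type while the next is not, then degree considerations at the shared outer vertices produce a hamiltonian $a_{j+1}d_{j+1}$- or $b_{j+2}c_{j+2}$-path contradicting Property~1.2. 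Without this argument the alternation, and hence the parity obstruction for odd $k$, does not follow.

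Two smaller points. Your structural premise that $\{b_i,c_i\}=\{a_{i+1},d_{i+1}\}$ is a $2$-cut separating $V_i\setminus\{b_i,c_i\}$ from the rest is false: the interior of $G_i$ also attaches to $G_{i-1}$ through $a_i,d_i$, so removing $\{b_i,c_i\}$ disconnects nothing; the correct (and sufficient) observation is simply that every edge incident with an inner vertex of $G_i$ lies in $G_i$, so inner vertices have degree~$2$ in ${\frak h}[V_i]$ and every component of ${\frak h}[V_i]$ meets $P_i$. For the positive direction your outline matches the paper's construction (inner deletions via the bad-pair-turned-good of 1.2/1.4/1.5(a), outer or mixed deletions via 1.6, 1.1, and 2.2--2.5, patched with the paths from 1.1, 1.3, 1.6 in the remaining cells), but note that the paper does not ``merge'' cells when an attachment is deleted; it simply works inside one cell containing the deleted vertex or edge and covers the neighbouring cell with a path avoiding its two identified outer vertices, which are already covered. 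These constructions still need to be written out, but they are routine once the non-hamiltonicity case analysis above is completed.
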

\begin{proof}
We will first show that both $\Gamma_1^k$ and $\Gamma_2^k$ are 
non-hamiltonian. 
Let us assume that $\Gamma_1^k$ has a hamiltonian cycle ${\frak h}$---the proof will be the same for $\Gamma_2^k$, since at this point we shall only use properties of suitable cells---and let us consider ${\frak h}_i := {\frak h}[V_i]$. All vertices of ${\frak h}_i$ have degree at most~2 and the inner vertices (of $G_i$) must have degree~2 in ${\frak h}_i$, while at least two of the outer vertices have degree~1. It is also straightforward to see that each component of ${\frak h}_i$ contains an outer vertex. Therefore, exactly one of the following cases must hold.

\smallskip \noindent Case 1. Two outer vertices have degree 1 and the other two 
outer vertices have degree 0 in ${\frak h}_i$. Then ${\frak h}_i$ consists of a 
path ${\frak p}_i$ between two outer vertices and two isolated (outer) vertices.

\smallskip \noindent Case 2. Two outer vertices have degree 1, the others have degree 0 and 2, respectively. Then ${\frak h}_i$ consists of a path ${\frak p}_i$ between two outer vertices and an isolated (outer) vertex.

\smallskip \noindent Case 3. Two outer vertices have degree 1, the others have degree 2. Then ${\frak h}_i$ is a path between two outer vertices.

\smallskip \noindent Case 4. All four outer vertices have degree 1. Then ${\frak h}_i$ is the disjoint union of two paths, both between two outer vertices.

\smallskip

\noindent We now treat these four cases. Case 2 is easy to handle: it cannot occur due to Property~1.5.

\smallskip

\noindent In Case~4, ${\frak h}_i$ must be the union of an $a_ib_i$- and a $c_id_i$-path, because of Property~1.4. Now we show that Case 4 is also impossible. Assume to the contrary that Case~4 holds for some ${\frak h}_i$. Then there exists an index $j$ such that Case~4 holds for ${\frak h}_j$, but not for ${\frak h}_{j+1}$. Indeed, if such a $j$ did not exist then ${\frak h}$ would not be connected, but the union of two cycles, since every ${\frak h}_i$ is the union of an $a_ib_i$- and a $c_id_i$-path. For ${\frak h}_{j+1}$ either Case~1 or Case~3 must hold and the degree 1 vertices of ${\frak h}_{j+1}$ are $a_{j+1}$ and $d_{j+1}$. Case~3 now cannot hold, because then $(a_{j+1},d_{j+1})$ would be a good pair in $G_{j+1}$, contradicting Property~1.2. Thus Case~1 holds, that is $b_{j+1}$ and $c_{j+1}$ are isolated vertices in ${\frak h}_{j+1}$. Then both $a_{j+2}$ and $d_{j+2}$ have degree 2 in ${\frak h}_{j+2}$, which is also impossible, since then ${\frak h}_{j+2}$ would be a hamiltonian $b_{j+2}c_{j+2}$-path in $G_{j+2}$, contradicting Property~1.2 (${\frak h}_{j+2}$ exists, since $k \ge 3$).

Now we prove that if ${\frak h}_i$ corresponds to Case~1 then ${\frak h}_{i+1}$ corresponds to Case~3 and vice versa. Let us assume first that ${\frak h}_i$ corresponds to Case~1. The end-vertices of ${\frak p}_i$ cannot be $a_i$ and $d_i$, as otherwise $(b_{j+1},c_{j+1})$ would be a good pair in $G_{j+1}$, contradicting Property~1.2. Similarly, the end-vertices of ${\frak p}_i$ cannot be $b_i$ and $c_i$, either. Thus, either $b_i$ or $c_i$ must be an isolated vertex of ${\frak h}_i$, so either $a_{i+1}= b_i$ or $d_{i+1} = c_i$ has degree~2 in ${\frak h}_{i+1}$. Therefore ${\frak h}_{i+1}$ cannot correspond to Case~1 and we have already seen that it cannot correspond to Case~2 or~4. Now let us assume that ${\frak h}_i$ corresponds to Case~3. Then the end-vertices of ${\frak h}_i$ cannot be $a_i$ and $d_i$, and they cannot be $b_i$ and $c_i$, by Property~1.2. Thus, either $b_i$ or $c_i$ has degree~2 in ${\frak h}_i$, whence, either $a_{i+1} = b_i$ or $d_{i+1} = c_i$ are isolated vertices in ${\frak h}_{i+1}$. Therefore ${\frak h}_{i+1}$ cannot correspond to Case~3 and we have already seen that it cannot correspond to Case~2 or~4.

We have proven that Cases 1 and 3 alternate among the ${\frak h}_i$'s, thus the 
number $k$ of ${\frak h}_i$'s should be even. Since this is not the case, we 
have finished the proof of the non-hamiltonicity of $\Gamma_1^k$ (and also 
$\Gamma_2^k$). 

\medskip
We proceed by proving that $\Gamma_1^k$ is hypohamiltonian.
Since $\Gamma_1^k$ is non-hamiltonian, it remains to show that for every 
vertex 
$x$, the graph $\Gamma_1^k - x$ is hamiltonian.
Let $x$ be an arbitrary vertex of $\Gamma_1^k$ and let us assume without loss of generality that $x \in V_0$. Suppose first that $x$ is an inner vertex of $G_0$. By the definition of $K_1$-cells, for $G_0 - x$ one of the pairs appearing in Properties~1.2, 1.4, 1.5(a) must be good.

\smallskip

\noindent Case 1. One of the pairs appearing in Property~1.2 becomes good after deleting $x$ from $G_0$. We may assume without loss of generality that $(b_0,c_0)$ is good in $G_0 - x$, i.e.\ there exists a hamiltonian $b_0c_0$-path ${\frak p}_0$ in $G_0 - x$. For $i = 1, \ldots, k - 2$ let ${\frak u}_i$ be the union of
the $a_ib_i$- and $c_id_i$-paths in $G_i$ guaranteed by Property~1.3
. Let furthermore ${\frak p}_{k - 1}$ be the hamiltonian $a_{k-1}d_{k-1}$-path of $G_{k-1} - b_{k-1} - c_{k-1}$ guaranteed by Property~1.6. Then ${\frak p}_0 \cup {\frak u}_1 \cup \ldots \cup {\frak u}_{k-2} \cup {\frak p}_{k-1}$ is a hamiltonian cycle of $\Gamma_1^k - x$.

\smallskip \noindent Case 2. One of the pair of pairs $((a_0,d_0),(b_0,c_0))$ 
and $((a_0,c_0),(b_0,d_0))$ appearing in Property~1.4 becomes good after 
deleting $x$ from $G_0$, i.e.\ either there exists an $a_0d_0$-path $P$ and a 
$b_0c_0$-path $Q$ which together partition $V_0 - x$ or there exists an 
$a_0c_0$-path $P$ and a $b_0d_0$-path $Q$ which together partition $V_0 - x$. 
For both cases let us define ${\frak u}_i$ for $i = 1, \ldots, k - 1$ as the 
union of the $a_ib_i$- and $c_id_i$-paths in $G_i$ guaranteed by Property~1.3. 
Then $P \cup Q \cup {\frak u}_1 \cup \ldots \cup {\frak u}_{k-1}$ is a 
hamiltonian cycle of $\Gamma_1^k - x$.

\smallskip \noindent Case 3. One of the pairs appearing in Property~1.5(a) becomes good after deleting $x$ from $G_0$. We may assume without loss of generality that $(a_0,b_0)$ or $(a_0,c_0)$ is good in $G_0 - d_0 - x$.

\noindent Case 3a. There exists a hamiltonian $a_0c_0$-path ${\frak p}_0$ in 
$G_0 - d_0 - x$. For all odd $j = 1, \ldots, k-2$, let ${\frak p}_j$ be a 
hamiltonian $b_jd_j$-path of $G_j - a_j - c_j$, guaranteed by Property~1.6.
If $k \ge 5$, for all even $j = 2, \ldots, k - 3$ let ${\frak p}_j$ be a 
hamiltonian $a_jc_j$-path of $G_j$, guaranteed by Property~1.1. Finally, let 
${\frak p}_{k-1}$ be a hamiltonian $a_{k-1}b_{k-1}$-path of $G_{k-1}$, again 
guaranteed by Property~1.1. Now ${\frak p}_0 \cup \ldots \cup {\frak p}_{k-1}$ 
is a hamiltonian cycle of $\Gamma_1^k - x$.

\noindent Case 3b. There exists a hamiltonian $a_0b_0$-path ${\frak p}_0$ in 
$G_0 - d_0 - x$. For all odd $j = 1, \ldots, k - 2$, let ${\frak p}_j$ be a 
hamiltonian $a_jc_j$-path of $G_j - b_j - d_j$, guaranteed by Property~1.6. For 
all even $j =2, \ldots, k - 1 $, let ${\frak p}_j$ be a hamiltonian 
$b_jd_j$-path of $G_j$, guaranteed by Property~1.1.
Now ${\frak p}_0 \cup \ldots \cup {\frak p}_{k-1}$ is a hamiltonian cycle of $\Gamma_1^k - x$.

\medskip

Let us suppose now that $x$ is an outer vertex of $G_0$; without loss 
of generality assume $x=a_0$. There is a hamiltonian $b_0d_0$-path ${\frak 
p}_0$ in $G_0 - a_0 - c_0$ by Property~1.6. Let ${\frak p}_1$ be a hamiltonian 
$a_1b_1$-path of $G_1$ (which exists by Property~1.1) and for all even $j = 2, 
\ldots, k - 1$ let ${\frak p}_j$ be a hamiltonian $a_jc_j$-path of $G_j - b_j - 
d_j$, guaranteed again by Property~1.6. If $k \ge 5$, for all odd $j = 3, 
\ldots, k-2 $, let furthermore ${\frak p}_j$ be a hamiltonian $b_jd_j$-path 
of $G_j$, guaranteed again by Property~1.1. Then ${\frak p}_0 \cup \ldots \cup 
{\frak p}_{k-1}$ is a hamiltonian cycle of $\Gamma_1^k - a_0$, finishing the 
proof of the hypohamiltonicity of $\Gamma_1^k$.

\medskip
Since $\Gamma_2^k$ is non-hamiltonian, we still have to show that for any 
neighbouring vertices $x$ and $y$, the graph $\Gamma_2^k - x - y$ is 
hamiltonian. By the construction of $\Gamma_2^k$ there exists a $G_i$ which 
contains both $x$ and $y$, so we may assume without loss of generality that 
$x,y \in V_0$.

If both $x$ and $y$ are inner vertices of $G_0$, then by Property~2.1, one of the pairs appearing in Properties~1.2, 1.4, 1.5(a) is good in $G_0 - x - y$ and the proof of the hamiltonicity of $\Gamma_2^k - x - y$ is the same as the proof of the hamiltonicity of $\Gamma_1^k - x$ (where $x$ is an inner vertex of $G_0$) we have just seen.

Let us suppose now that $y$ is an outer vertex of $G_0$ and $x\in V_0$ is an 
arbitrary, but fixed neighbour of $y$. Without loss of generality we may 
assume 
$y = a_0$. By Property~2.2 there exists a hamiltonian $b_0c_0$-path ${\frak 
p}_0$ in $G_0 - a_0 - x$. Now the proof is essentially the same as for Case~1 
of the proof of the hypohamiltonicity of $\Gamma_1^k$, but we include it 
here, 
since the setting is a slightly different one.

For $i = 1, \ldots, k-2$ let ${\frak u}_i$ be the union of the $a_ib_i$- and  
$c_id_i$-paths of $G_i$ guaranteed by Property~1.3. Let furthermore ${\frak 
p}_{k-1}$ be a hamiltonian $a_{k-1}d_{k-1}$-path of $G_{k-1} - b_{k-1} - 
c_{k-1}$ whose existence is guaranteed by Property~1.6. Now ${\frak p}_0 \cup 
{\frak u}_1 \cup \ldots \cup {\frak u}_{k-2} \cup {\frak p}_{k-1}$ is a 
hamiltonian cycle of $\Gamma_2^k - a_0 - x$, finishing the proof of the 
$K_2$-hypohamiltonicity of $\Gamma_1^k$. 

\end{proof}



We will now present a suitable cell that is both a $K_1$- and $K_2$-cell: the 
quintuple $(J_{18},a,b,c,d)$ of Figure~\ref{fig:j18}. It is obtained by 
removing two adjacent vertices from the dodecahedron. This particular graph was 
already used by Faulkner and Younger in the context of determining hamiltonian 
properties of planar 3-connected cubic graphs~\cite{FY74}. Although they 
mention two of the properties used below (more precisely, points 1 and 5), no 
proof is given; we therefore give all details here. 

\begin{figure}[!htb]
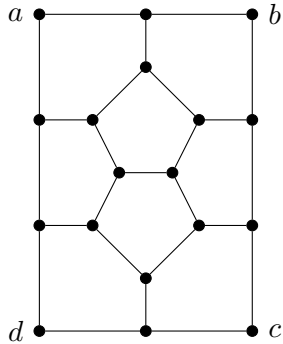

\begin{center}
\tikz[fo/.style={draw, fill=black, circle, minimum size={0.16cm}, inner sep=0cm, font=\bf, align=center, scale=0.88},
scale=0.7]
{
\node [fo, label=left:$d$] (1) at (0,0) {};
\node [fo] (2) at (2,0) {};
\node [fo, label=right:$c$] (3) at (4,0) {};
\node [fo] (4) at (0,2) {};
\node [fo] (5) at (0,4) {};
\node [fo, label=left:$a$] (6) at (0,6) {};
\node [fo] (7) at (4,2) {};
\node [fo] (8) at (4,4) {};
\node [fo, label=right:$b$] (9) at (4,6) {};
\node [fo] (10) at (2,6) {};
\node [fo] (11) at (2,1) {};
\node [fo] (12) at (2,5) {};
\node [fo] (13) at (1,2) {};
\node [fo] (14) at (1,4) {};
\node [fo] (15) at (3,2) {};
\node [fo] (16) at (3,4) {};
\node [fo] (17) at (1.5,3) {};
\node [fo] (18) at (2.5,3) {};

\draw

(1) edge node  {} (2)
    edge node {} (4)

(2) edge node {} (3)
    edge node {} (11)
		
(3) edge node {} (7)

(4) edge node {} (5)
    edge node {} (13)

(5) edge node {} (6)
    edge node {} (14)
	
(6) edge node {} (10)

(7) edge node {} (8)
    edge node {} (15)
		
(8) edge node {} (9)
    edge node {} (16)
		
(9) edge node {} (10)

(10) edge node {} (12)
     		
(11) edge node {} (13)
     edge node {} (15)

(12) edge node {} (14)
     edge node {} (16)
		
(13) edge node {} (17)

(14) edge node {} (17)

(15) edge node {} (18)

(16) edge node {} (18)

(17) edge node {} (18)
;}

\caption{The graph $J_{18}$.}
\label{fig:j18}
\end{center}
\end{figure}


\filbreak
\begin{lemma}\label{lem:egy}
The quintuple $(J_{18},a,b,c,d)$ of Figure~\ref{fig:j18} is a suitable cell.
\end{lemma}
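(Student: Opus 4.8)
The plan is to verify the six families of conditions 1.1--1.6 in turn. Throughout I would exploit the two reflective automorphisms of $J_{18}$ apparent from Figure~\ref{fig:j18}: the reflection in the vertical axis, which swaps $a\leftrightarrow b$ and $c\leftrightarrow d$, and the reflection in the horizontal axis, which swaps $a\leftrightarrow d$ and $b\leftrightarrow c$. These generate a Klein four-group acting on $\{a,b,c,d\}$, which roughly halves the work at each step. For the \emph{positive} conditions this leaves little to do: for 1.1 it suffices to exhibit a hamiltonian $ab$-path and a hamiltonian $ac$-path; for 1.3, one pair of vertex-disjoint paths (an $ab$-path and a $cd$-path) covering $V(J_{18})$; and for 1.6, a hamiltonian $bd$-path of $J_{18}-a-c$ together with a hamiltonian $ad$-path of $J_{18}-b-c$. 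Each of these is settled by writing down the path(s) explicitly, e.g.\ as sequences of vertices of Figure~\ref{fig:j18}; this is routine.

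For the \emph{negative} conditions 1.2 and 1.5 I would invoke Grinberg's theorem. First note that $J_{18}$, embedded as in Figure~\ref{fig:j18}, has eight pentagonal faces and one $10$-gon (the outer cycle $d,2,c,7,8,b,10,a,5,4$). To rule out a hamiltonian $xy$-path in a plane graph $H$ (here $H=J_{18}$ or $H=J_{18}-v$) one adjoins a vertex $z$ joined to $x$ and $y$ inside a face incident with both; hamiltonian $xy$-paths of $H$ then correspond exactly to hamiltonian cycles of $H+z$. For 1.2, with $x=a$ and $y=d$ (the pair $b,c$ being handled by the vertical reflection), the graph $J_{18}+z$ has nine pentagons and a single $9$-gon, so Grinberg's identity $\sum_i (i-2)(\phi_i-\phi_i')=0$ (with $\phi_i,\phi_i'$ the numbers of $i$-faces inside resp.\ outside a putative hamiltonian cycle) reads $3(\phi_5-\phi_5')=\pm 7$, which is impossible. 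For 1.5 it is enough, by symmetry, to treat $v=a$: the graph $J_{18}-a$ has seven pentagons and one $11$-gon, and for each of the three pairs $(b,d),(c,d),(b,c)$ the apex construction produces a face-length multiset for which the non-pentagonal contributions to Grinberg's sum total a nonzero residue modulo $3$ (namely $\pm 5\pm 6$, or $\pm 2\pm 9$, or $\pm 8$), again a contradiction.

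Condition 1.4 is where this breaks down --- the obvious apex constructions yield face lengths admitting a balanced Grinberg partition --- and I expect it to be the crux of the proof. Here I would argue through the dodecahedron $D$: one has $J_{18}=D-\{p,q\}$ for an edge $pq\in E(D)$, and since $D$ has girth $5$ the two degree-$2$ vertices arising from $p$ are non-adjacent with no common neighbour, which (comparing with the common-neighbour pairs $\{a,b\}$ and $\{c,d\}$ of $J_{18}$ and with the structure of the outer $10$-gon) forces $\{a,d\}=N_D(p)\setminus\{q\}$ and $\{b,c\}=N_D(q)\setminus\{p\}$. Consequently a spanning $ad$-path $\cup\, bc$-path of $J_{18}$ closes up, via the edges $pa,pd,qb,qc$, to a $2$-factor of $D$ avoiding the chord $pq$ whose two cycles separate $p$ from $q$, while a spanning $ac$-path $\cup\, bd$-path closes up to a hamiltonian cycle of $D$ avoiding $pq$ which, upon deletion of $p$ and $q$, falls into an $ac$-path and a $bd$-path (rather than into an $ab$-path and a $cd$-path, as the spanning system of 1.3 does). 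I would exclude both possibilities by a short case analysis: in a hamiltonian cycle or $2$-factor of $D$ missing the chord $pq$, planarity forces the two pentagons of $D$ on $pq$ to lie on one side of the curve and the third faces at $p$ and at $q$ on the other, and then following the forced edges around the pentagons at $a,b,c,d$ --- equivalently, the forced edges at the vertices $2,10,11,12$ of $J_{18}$ and around the central pentagon $\{12,14,16,17,18\}$ --- shows that only the $ab/cd$ pattern of 1.3 can occur. This bookkeeping, along with all the explicit paths above, can also be confirmed by computer.
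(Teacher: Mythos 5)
Your handling of Properties 1.1, 1.3 and 1.6 (explicit paths, reduced by the Klein four-group of symmetries) and of 1.2 and 1.5 is correct and is essentially the paper's own argument: the paper also uses Grinberg's criterion for these negative conditions, adding the missing edge instead of an apex vertex. Your face counts and residues check out ($9$ pentagons plus a $9$-gon for the apexed $J_{18}$; faces of sizes $7,8$, or $4,11$, or $5,10$ for the three pairs in $J_{18}-a$), and your variant even avoids the paper's short side-of-the-cycle argument, since your non-pentagonal contributions are $\not\equiv 0 \pmod 3$ for every choice of signs.

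The genuine gap is Property 1.4, which you correctly identify as the crux. Your reduction to the dodecahedron is sound --- with $p\sim a,d$ and $q\sim b,c$, a spanning union of an $ad$- and a $bc$-path corresponds to a $2$-factor of $D$ avoiding $pq$ and separating $p$ from $q$, and a spanning union of an $ac$- and a $bd$-path to a hamiltonian cycle of $D$ avoiding $pq$ with the crossed splitting --- but two things are missing. First, a small one: the no-common-neighbour observation only excludes the pairing $\{a,b\},\{c,d\}$; to force $\{a,d\}=N_D(p)\setminus\{q\}$ rather than $\{a,c\},\{b,d\}$ you still need an extra line (e.g.\ the two non-$q$ neighbours of $p$ are at distance $3$ in $J_{18}$, whereas $a,c$ and $b,d$ are farther apart, or an embedding argument). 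Second, and decisively: the step ``following the forced edges \dots\ shows that only the $ab/cd$ pattern of 1.3 can occur'' is precisely the content of 1.4 and is asserted, not proved. The side-of-the-curve observation is true for a single hamiltonian cycle but does not by itself determine which neighbour of $p$ is linked to which neighbour of $q$, it has to be reformulated for the two-cycle $2$-factor case, and the remaining bookkeeping over all hamiltonian cycles and $2$-factors of $D$ avoiding $pq$ is a nontrivial case analysis that you defer to a computer --- which defeats the purpose of this hand-checkable lemma. For comparison, the paper settles 1.4 completely and briefly: for $((a,d),(b,c))$ it closes a putative spanning system with the edges $ab$ and $cd$, so that $J_{18}+ab+cd$ has two triangles and an octagon among otherwise $2$-faces; each triangle shares its added edge (which lies on the cycle) with the octagon, so both triangles fall on the same side and Grinberg's sum is $\not\equiv 0\pmod 3$. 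For $((a,c),(b,d))$ it adds a planar apex $v$ over the $4$-cycle $abcd$ and notes that a spanning $ac$- and $bd$-path system would yield a $K_5$-subdivision on $a,b,c,d,v$ in a planar graph. You would need either to carry out your case analysis in full or to substitute arguments of this kind.
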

\begin{proof}
We have to verify Properties 1.1--1.6. Some of the upcoming arguments are most 
easily verified by using figures, which we have provided in the Appendix; all 
figures referenced in this proof are to be found there. Ignoring symmetric 
cases, for Property~1.1 it suffices to provide a hamiltonian $ab$-path and a 
hamiltonian $ac$-path in $J_{18}$, see Figure~\ref{fig:J18spanningpaths} in the 
Appendix. For Property~1.3 it is enough to present an $ab$-path and a $cd$-path 
whose vertex sets partition $V(J_{18})$; an easy task, verified in 
Figure~\ref{fig:J18spanningpaths}. Again by symmetry, for Property~1.6 it 
suffices to give a hamiltonian $bc$-path in $J_{18} - a - d$ and a hamiltonian 
$bd$-path in $J_{18} - a - c$; these are depicted in 
Figure~\ref{fig:J18bcpath} in the Appendix. For Properties~1.2, 1.4, and 1.5, 
we shall use the 
planarity of $J_{18}$ and Grinberg's Criterion~\cite{Gr68} stating that if a 
plane graph has a hamiltonian cycle such that there are $f_i$ faces of size $i$ 
in the exterior of the cycle and $f'_i$ faces of size $i$ in the interior of 
the cycle, then
\[ \sigma := \sum_i (i-2)(f_i-f'_i) = 0.\]
Let us call a face of a plane graph an \emph{$i$-face} if dividing its size by 3, we obtain the remainder~$i$ (thus $i \in \{ 0, 1, 2\}$). Now we are ready to verify Properties~1.2, 1.4, and 1.5. By symmetry, for these it suffices to show that
\begin{enumerate}
\item there is no hamiltonian $ad$-path in $J_{18}$ (Property~1.2),
\item there is no hamiltonian $bc$-path in $J_{18}-a$ (Property~1.5),
\item there is no hamiltonian $bd$-path in $J_{18}-a$ (Property~1.5),
\item there is no hamiltonian $cd$-path in $J_{18}-a$ (Property~1.5),
\item there exist no $ad$- and $bc$-path partitioning $V(J_{18})$ (Property~1.4), and
\item there exist no $ac$- and $bd$-path partitioning $V(J_{18})$ (Property~1.4).
\end{enumerate}

\noindent To prove point 1, let us assume to the contrary that there exists a hamiltonian $ad$-path ${\frak p}$ in $J_{18}$. Then we have a hamiltonian cycle ${\frak h} := {\frak p} + ad$ in the plane graph $J_{18} + ad$, which is a so-called Grinbergian graph (see e.g.\ \cite{JMOPZ17}, \cite{Wi18}), because all but one of its faces are 2-faces. These graphs are easily seen to be non-hamiltonian: let $f_i$ and $f_i'$ be as in Grinberg's theorem, using ${\frak h}$ as the hamiltonian cycle; now $\sigma$ is not divisible by 3, thus it cannot be $0$, a contradiction.

\smallskip \noindent Now we prove points 2, 3, and 4. The plane graph $J_{18} - a$ has only 2-faces and by adding any (one) of the edges $bc, bd, cd$ we obtain a graph with one 0-face and one 1-face that are on the opposite sides of the new edge, while all the other faces are still 2-faces. If there is a hamiltonian $bc$-path ${\frak p}'$ in $J_{18} - a$, then ${\frak h}' := {\frak p}' + bc$ is a hamiltonian cycle in $(J_{18} - a) + bc$. Let $f_i$ and $f_i'$ be as in Grinberg's theorem again, using ${\frak h}'$ as the hamiltonian cycle. The 0-face and the 1-face must be on different sides of ${\frak h}'$, since they both contain the edge $bc \in E({\frak h}')$, thus $\sigma$ is again indivisible by $3$, a contradiction. The proof is the same for points 3 and 4.

\smallskip \noindent Let us now prove point 5. Assume to the contrary that 
there exists the disjoint union ${\frak u}$ of an $ad$- and a $bc$-path 
spanning $J_{18}$. Then ${\frak h}'' := {\frak u} + ab + cd$ is a hamiltonian 
cycle of the plane graph $J_{18} + ab + cd$, which has two 0-faces and all 
other faces are 2-faces, one of which is an octagon. Let $f_i$ and $f_i'$ be as 
in Grinberg's theorem, using ${\frak h}''$ as the hamiltonian cycle. Then both 
0-faces are on a different side of ${\frak h}''$ than the octagon, since the 
octagon contains both $ab$ and $cd$ and these edges lie in ${\frak h}''$. So 
the 0-faces are on the same side of ${\frak h}''$, whence $\sigma$ is again 
indivisible by $3$, a contradiction.

\smallskip \noindent Finally, we prove point 6.  It is easy to see that the graph $J'_{18}$ obtained by adding a new vertex $v$ and the edges $ab, bc, cd, da, va, vb, vc, vd$ to $J_{18}$ is also a planar graph. Now assume to the contrary that there exists the disjoint union of an $ac$- and a $bd$-path in $J_{18}$. Then $J'_{18}$ would contain a subdivision of $K_5$ (consider the vertices $a,b,c,d,v$), a contradiction by Kuratowski's Theorem.
\end{proof}



\begin{lemma}\label{lem:j18}
The quintuple $(J_{18},a,b,c,d)$ of Figure~\ref{fig:j18} is both a $K_1$- and a $K_2$-cell.
\end{lemma}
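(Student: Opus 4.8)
The plan is to verify the two remaining bundles of properties on top of Lemma~\ref{lem:egy}: the $K_1$-cell condition and Properties~2.1--2.5 for the $K_2$-cell condition. Since $J_{18}$ has only $14$ inner vertices, all of these are finite case checks; the strategy is to organise them so that the checking is short and, wherever possible, reuses the Grinberg/Kuratowski arguments of Lemma~\ref{lem:egy} rather than exhibiting paths by hand. Concretely, for the $K_1$-cell property I would show: for every inner vertex $v$ of $J_{18}$, at least one pair from Properties~1.2, 1.4, 1.5(a) becomes good in $J_{18}-v$. The natural candidate is the pair $(a,d)$ (a bad pair by~1.2): I would argue that $J_{18}-v$ has a hamiltonian $ad$-path for each inner $v$, typically by displaying such a path in an appendix figure, or by noting that deleting $v$ destroys the Grinbergian obstruction from point~1 of Lemma~\ref{lem:egy}'s proof. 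If for some inner vertex no $ad$-path survives, I would instead use one of the $1.5(a)$ pairs, e.g.\ a hamiltonian $bc$-path in $J_{18}-a-v$.

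For the $K_2$-cell property, Properties~2.2--2.5 are the quick part: each asserts that a specific bad pair becomes good after deleting an outer vertex together with one of its neighbours. By the symmetry of $J_{18}$ (the automorphism swapping $a\leftrightarrow d$, $b\leftrightarrow c$, and the one swapping $a\leftrightarrow b$, $c\leftrightarrow d$), it suffices to verify, say, Property~2.2 for each neighbour $x$ of $a$; since $a$ has degree~$3$ in $J_{18}$, that is three hamiltonian $b_0c_0$-paths in $J_{18}-a-x$ to exhibit, and the rest follow by symmetry. Property~2.1 is the bulk of the work: for every edge $uv$ with both endpoints inner, one must check that some pair from~1.2,~1.4,~1.5(a) is good in $J_{18}-u-v$. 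Again $(a,d)$ is the natural target---one shows $J_{18}-u-v$ has a hamiltonian $ad$-path for each inner edge---and one only needs representatives up to the symmetry group. I would bound the number of inner edges, partition them into orbits, and pick one $ad$-path (or, failing that, a $1.5(a)$-path) per orbit, referencing appendix figures for the explicit paths.

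The main obstacle is genuinely bookkeeping rather than ideas: one must be certain the case analysis is \emph{complete}---every inner vertex for $K_1$, every inner edge for $2.1$, every outer-neighbour pair for $2.2$--$2.5$---and that the symmetry reductions are applied correctly, since a single missed orbit would break the proof. A secondary subtlety is that for $2.1$ one cannot always use the same target pair: the Grinberg parity obstruction that kills the $ad$-path in $J_{18}$ may or may not be destroyed by removing a particular inner edge, so for some edges one may need to fall back to a $1.5(a)$ pair, and verifying that \emph{some} good pair always exists is where care is required. I would therefore present the verification as a table (inner vertex or inner edge $\mapsto$ which bad pair becomes good $\mapsto$ figure reference), deferring the actual drawn paths to the Appendix, exactly as in Lemma~\ref{lem:egy}; the reader then only needs to trust that the listed paths are correct and that the list is exhaustive up to symmetry.
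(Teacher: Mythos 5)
Your plan is essentially the paper's proof: the authors verify the $K_1$- and $K_2$-cell properties by an exhaustive check---carried out by a computer program, with appendix figures listing, up to symmetry, every inner-vertex-deleted subgraph (for the $K_1$-property), every subgraph obtained by deleting a pair of adjacent inner vertices (for Property~2.1, additionally deleting the appropriate outer vertex when the newly good pair comes from 1.5(a)), and the subgraphs $J_{18}-a-x$ for Property~2.2 with 2.3--2.5 following by symmetry, each case exhibiting an explicit path that makes a previously bad pair good---which is exactly the case-table-plus-appendix-figures organisation you propose. Two small corrections: arguing that deleting a vertex ``destroys the Grinbergian obstruction'' would not suffice on its own, since Grinberg's criterion is only a necessary condition for hamiltonicity, so the explicit paths are genuinely needed in every case; and $a$ has degree $2$ in $J_{18}$ (the outer vertices are precisely the degree-2 vertices left after removing two adjacent cubic vertices from the dodecahedron), so Property~2.2 requires checking two neighbours of $a$, not three.
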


\begin{proof}
We implemented a straightforward computer program which we used to verify
that the quintuple $(J_{18},a,b,c,d)$ indeed has the desired properties and
thus is both a $K_1$-cell and a $K_2$-cell. The source code of this program can
be found on GitHub \cite{Re22}
and in Section~\ref{app:j18} of the Appendix we included drawings so
these results can also be verified by hand.

In order to show that $(J_{18},a,b,c,d)$ is a $K_1$-cell, by Lemma~\ref{lem:egy} it suffices to prove that if by deleting any inner vertex of
$J_{18}$ we obtain a graph in which at least one of the bad pairs of Properties~1.2, 1.4, 1.5(a) becomes good. Figure~\ref{fig:j18k1goodpairs} in
the Appendix shows all inner vertex-deleted subgraphs of $J_{18}$ up to symmetry and highlights a new good pair. Below each figure we mention which previously bad pair became good in this subgraph and to which property of suitable cells the bad pair belonged.

We now prove that $(J_{18},a,b,c,d)$ is a $K_2$-cell. It suffices to show that Properties~2.1--2.5 hold. Figure~\ref{fig:j182.1-2.5} illustrates the proof of these properties. This figure shows all subgraphs of $J_{18}$ up to symmetry obtained by deleting a pair of adjacent inner vertices and highlights a new good pair, i.e.\ all subgraphs relating to Property~2.1. If this new good pair belonged to Property~1.5, we show the subgraph in which we also deleted the appropriate outer vertex. The figure also shows the proof of Property~2.2, i.e.\ all subgraphs in which we deleted $a$ and one of its neighbours, together with a hamiltonian path highlighting the new good pair. Properties~2.3--2.5 are symmetric.
\end{proof}

From Theorem~\ref{thm:gamma} and Lemma~\ref{lem:j18} we can immediately infer the following result.

\begin{corollary}\label{cor:infinite_fam}
There exists an infinite family ${\cal G}$ of non-hamiltonian planar 
$K_1$-hamiltonian $K_2$-hamiltonian graphs.
\end{corollary}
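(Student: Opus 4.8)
The plan is to combine Theorem~\ref{thm:gamma} with Lemma~\ref{lem:j18} in the most direct way possible. By Lemma~\ref{lem:j18}, the quintuple $(J_{18},a,b,c,d)$ is simultaneously a $K_1$-cell and a $K_2$-cell, and, crucially, $J_{18}$ admits a planar embedding in which $a,b,c,d$ appear on a common facial cycle in this cyclic order (this is visible from Figure~\ref{fig:j18}: the four outer vertices lie on the outer face with the right cyclic arrangement, and in any case $J_{18}$ is obtained from the dodecahedron by deleting an edge's endpoints, so cofaciality of the four remaining attachment vertices is immediate). Hence the hypotheses needed to build $\Gamma_1^k$ and $\Gamma_2^k$ are met by taking every $H_i$ to be a copy of $(J_{18},a,b,c,d)$, and the planarity clause in the construction of $\Gamma_j^k$ applies, so each resulting graph is planar.

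First I would fix an odd integer $k\ge 3$ and set $H_0 = H_1 = \cdots = H_{k-1} = (J_{18},a,b,c,d)$ (using disjoint copies). Since this quintuple is a $K_1$-cell, $\Gamma_1^k$ is defined; since it is also a $K_2$-cell, $\Gamma_2^k$ is defined. The cells are pairwise disjoint and each admits the required planar embedding with $a,b,c,d$ cofacial and in order, so by the remark following the construction, both $\Gamma_1^k$ and $\Gamma_2^k$ can be realised as planar graphs. By Theorem~\ref{thm:gamma}, $\Gamma_1^k$ is hypohamiltonian and $\Gamma_2^k$ is $K_2$-hypohamiltonian. In particular each $\Gamma_1^k$ is planar, non-hamiltonian, and $K_1$-hamiltonian (that is the content of hypohamiltonicity).

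To get a graph that is planar, non-hamiltonian, \emph{and both $K_1$- and $K_2$-hamiltonian}, I would in fact want a single family where both properties hold at once. The natural candidate is $\Gamma^k$ built from a cell that is simultaneously a $K_1$-cell and a $K_2$-cell --- exactly what Lemma~\ref{lem:j18} provides --- so that one reruns the proof of Theorem~\ref{thm:gamma} and obtains a graph that is at once hypohamiltonian (via the $K_1$-cell argument) and $K_2$-hamiltonian (via the $K_2$-cell argument). Concretely, with all $H_i$ equal to $(J_{18},a,b,c,d)$ the identifications producing $\Gamma_1^k$ and $\Gamma_2^k$ coincide, yielding one graph $\Gamma^k$ that inherits both conclusions of Theorem~\ref{thm:gamma}; it is planar by the cofaciality remark. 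Then $\mathcal{G} := \{\Gamma^k : k\ge 3 \text{ odd}\}$ is the desired family: each member is planar, non-hamiltonian, $K_1$-hamiltonian, and $K_2$-hamiltonian. Distinct odd $k$ give graphs of distinct orders ($14k$ vertices each, since identifications merge $2k$ pairs out of $18k$), so $\mathcal{G}$ is infinite.

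The proof is essentially an assembly step and I do not expect a genuine obstacle; the only thing to be careful about is checking that the two constructions $\Gamma_1^k$ and $\Gamma_2^k$ really do agree when fed the same cell (they perform the same vertex identifications $b_i \sim a_{i+1}$, $c_i \sim d_{i+1}$), and that the planar embedding of $J_{18}$ with $a,b,c,d$ cofacial in that cyclic order is the one used --- both are routine to verify from Figure~\ref{fig:j18}. If one prefers to avoid introducing a combined $\Gamma^k$, an equally valid route is to observe directly that the family $\{\Gamma_1^k\}$ already works: each $\Gamma_1^k$ is hypohamiltonian hence $K_1$-hamiltonian and non-hamiltonian, and because every $H_i$ is also a $K_2$-cell, the second half of the proof of Theorem~\ref{thm:gamma} applies verbatim to show $\Gamma_1^k$ is $K_2$-hamiltonian as well.
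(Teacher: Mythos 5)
Your proposal is correct and takes essentially the same route as the paper, which derives the corollary immediately from Theorem~\ref{thm:gamma} and Lemma~\ref{lem:j18} by using copies of $(J_{18},a,b,c,d)$ as every cell, so that $\Gamma_1^k=\Gamma_2^k$ is planar, hypohamiltonian, and $K_2$-hypohamiltonian for each odd $k\ge 3$. Only a minor slip: the $2k$ identifications reduce $18k$ vertices to $16k$, not $14k$ (indeed $G_{48}$ has $48=16\cdot 3$ vertices), which does not affect the conclusion that distinct odd $k$ yield non-isomorphic graphs.
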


The smallest member of ${\cal G}$, which we will call $G_{48}$, is depicted in Figure~\ref{fig:48v}.

\begin{figure}[!htb]
\begin{center}
\begin{tikzpicture}[scale=0.05]
    \definecolor{marked}{rgb}{0.25,0.5,0.25}
    \node [circle,fill,scale=0.25] (48) at (34.827472,50.053745) {};
    \node [circle,fill,scale=0.25] (47) at (42.771147,50.064494) {};
    \node [circle,fill,scale=0.25] (46) at (29.710844,39.949478) {};
    \node [circle,fill,scale=0.25] (45) at (22.433622,44.001934) {};
    \node [circle,fill,scale=0.25] (44) at (22.433622,56.008813) {};
    \node [circle,fill,scale=0.25] (43) at (29.700094,60.125764) {};
    \node [circle,fill,scale=0.25] (42) at (51.445768,72.670106) {};
    \node [circle,fill,scale=0.25] (41) at (41.911209,64.156722) {};
    \node [circle,fill,scale=0.25] (40) at (41.900460,35.983016) {};
    \node [circle,fill,scale=0.25] (39) at (51.424269,27.372890) {};
    \node [circle,fill,scale=0.25] (38) at (40.976027,24.019133) {};
    \node [circle,fill,scale=0.25] (37) at (32.000427,29.178759) {};
    \node [circle,fill,scale=0.25] (36) at (17.134256,37.541652) {};
    \node [circle,fill,scale=0.25] (35) at (8.653121,49.967751) {};
    \node [circle,fill,scale=0.25] (34) at (17.102008,62.404599) {};
    \node [circle,fill,scale=0.25] (33) at (32.000427,70.896483) {};
    \node [circle,fill,scale=0.25] (32) at (40.943779,76.109855) {};
    \node [circle,fill,scale=0.25] (31) at (58.626247,76.916047) {};
    \node [circle,fill,scale=0.25] (30) at (57.669567,63.189292) {};
    \node [circle,fill,scale=0.25] (29) at (53.735352,56.331290) {};
    \node [circle,fill,scale=0.25] (28) at (53.735352,43.711704) {};
    \node [circle,fill,scale=0.25] (27) at (57.648068,36.832204) {};
    \node [circle,fill,scale=0.25] (26) at (58.583250,23.105449) {};
    \node [circle,fill,scale=0.25] (25) at (55.659462,15.312265) {};
    \node [circle,fill,scale=0.25] (24) at (38.697192,12.614208) {};
    \node [circle,fill,scale=0.25] (23) at (23.282811,21.514564) {};
    \node [circle,fill,scale=0.25] (22) at (0.000000,49.999999) {};
    \node [circle,fill,scale=0.25] (21) at (23.272060,78.463936) {};
    \node [circle,fill,scale=0.25] (20) at (38.772436,87.439535) {};
    \node [circle,fill,scale=0.25] (19) at (55.734707,84.709231) {};
    \node [circle,fill,scale=0.25] (18) at (69.009995,70.917983) {};
    \node [circle,fill,scale=0.25] (17) at (68.934749,62.576587) {};
    \node [circle,fill,scale=0.25] (16) at (66.440930,49.999999) {};
    \node [circle,fill,scale=0.25] (15) at (68.924001,37.434160) {};
    \node [circle,fill,scale=0.25] (14) at (68.988496,29.092765) {};
    \node [circle,fill,scale=0.25] (13) at (70.643876,14.151348) {};
    \node [circle,fill,scale=0.25] (12) at (25.002684,6.702140) {};
    \node [circle,fill,scale=0.25] (11) at (25.002684,93.297859) {};
    \node [circle,fill,scale=0.25] (10) at (70.751368,85.805652) {};
    \node [circle,fill,scale=0.25] (9) at (77.233147,72.250886) {};
    \node [circle,fill,scale=0.25] (8) at (77.157903,55.191872) {};
    \node [circle,fill,scale=0.25] (7) at (77.136406,44.786627) {};
    \node [circle,fill,scale=0.25] (6) at (77.190151,27.695366) {};
    \node [circle,fill,scale=0.25] (5) at (74.997310,6.702140) {};
    \node [circle,fill,scale=0.25] (4) at (74.997310,93.297859) {};
    \node [circle,fill,scale=0.25] (3) at (88.046863,58.921852) {};
    \node [circle,fill,scale=0.25] (2) at (88.036115,41.045898) {};
    \node [circle,fill,scale=0.25] (1) at (99.999999,49.999999) {};
    \draw [black] (48) to (46);
    \draw [black] (48) to (43);
    \draw [black] (48) to (47);
    \draw [black] (47) to (40);
    \draw [black] (47) to (41);
    \draw [black] (46) to (37);
    \draw [black] (46) to (45);
    \draw [black] (45) to (36);
    \draw [black] (45) to (44);
    \draw [black] (44) to (34);
    \draw [black] (44) to (43);
    \draw [black] (43) to (33);
    \draw [black] (42) to (32);
    \draw [black] (42) to (31);
    \draw [black] (42) to (30);
    \draw [black] (41) to (32);
    \draw [black] (41) to (29);
    \draw [black] (41) to (33);
    \draw [black] (40) to (37);
    \draw [black] (40) to (28);
    \draw [black] (40) to (38);
    \draw [black] (39) to (26);
    \draw [black] (39) to (38);
    \draw [black] (39) to (27);
    \draw [black] (38) to (24);
    \draw [black] (37) to (23);
    \draw [black] (36) to (35);
    \draw [black] (36) to (23);
    \draw [black] (35) to (22);
    \draw [black] (35) to (34);
    \draw [black] (34) to (21);
    \draw [black] (33) to (21);
    \draw [black] (32) to (20);
    \draw [black] (31) to (18);
    \draw [black] (31) to (19);
    \draw [black] (30) to (29);
    \draw [black] (30) to (17);
    \draw [black] (29) to (16);
    \draw [black] (28) to (16);
    \draw [black] (28) to (27);
    \draw [black] (27) to (15);
    \draw [black] (26) to (25);
    \draw [black] (26) to (14);
    \draw [black] (25) to (13);
    \draw [black] (25) to (24);
    \draw [black] (24) to (12);
    \draw [black] (23) to (12);
    \draw [black] (22) to (11);
    \draw [black] (22) to (12);
    \draw [black] (21) to (11);
    \draw [black] (20) to (11);
    \draw [black] (20) to (19);
    \draw [black] (19) to (10);
    \draw [black] (18) to (9);
    \draw [black] (18) to (17);
    \draw [black] (17) to (8);
    \draw [black] (16) to (7);
    \draw [black] (16) to (8);
    \draw [black] (15) to (14);
    \draw [black] (15) to (7);
    \draw [black] (14) to (6);
    \draw [black] (13) to (5);
    \draw [black] (13) to (6);
    \draw [black] (12) to (5);
    \draw [black] (11) to (4);
    \draw [black] (10) to (9);
    \draw [black] (10) to (4);
    \draw [black] (9) to (3);
    \draw [black] (8) to (3);
    \draw [black] (7) to (2);
    \draw [black] (6) to (2);
    \draw [black] (5) to (1);
    \draw [black] (4) to (1);
    \draw [black] (3) to (1);
    \draw [black] (2) to (1);
\end{tikzpicture}
\qquad
\begin{tikzpicture}[scale=0.05]
    \definecolor{marked}{rgb}{0.25,0.5,0.25}
    \node [circle,fill,scale=0.25] (48) at (17.876847,40.631206) {};
    \node [circle,fill,scale=0.25] (47) at (22.930919,23.857865) {};
    \node [circle,fill,scale=0.25] (46) at (9.766053,51.765613) {};
    \node [circle,fill,scale=0.25] (45) at (20.657689,67.038180) {};
    \node [circle,fill,scale=0.25] (44) at (27.002867,63.264179) {};
    \node [circle,fill,scale=0.25] (43) at (24.994481,50.551753) {};
    \node [circle,fill,scale=0.25] (42) at (41.425733,42.827189) {};
    \node [circle,fill,scale=0.25] (41) at (30.103728,33.028027) {};
    \node [circle,fill,scale=0.25] (40) at (25.005516,6.698299) {};
    \node [circle,fill,scale=0.25] (39) at (71.595673,16.011915) {};
    \node [circle,fill,scale=0.25] (38) at (75.005515,6.698299) {};
    \node [circle,fill,scale=0.25] (37) at (0.000000,49.999999) {};
    \node [circle,fill,scale=0.25] (36) at (28.492604,84.054292) {};
    \node [circle,fill,scale=0.25] (35) at (42.098873,82.487310) {};
    \node [circle,fill,scale=0.25] (34) at (36.989625,71.264621) {};
    \node [circle,fill,scale=0.25] (33) at (32.222466,48.554402) {};
    \node [circle,fill,scale=0.25] (32) at (38.501433,46.413594) {};
    \node [circle,fill,scale=0.25] (31) at (48.101963,50.949016) {};
    \node [circle,fill,scale=0.25] (30) at (40.851907,34.142572) {};
    \node [circle,fill,scale=0.25] (29) at (35.985432,25.656586) {};
    \node [circle,fill,scale=0.25] (28) at (40.951223,13.529021) {};
    \node [circle,fill,scale=0.25] (27) at (57.978370,17.534758) {};
    \node [circle,fill,scale=0.25] (26) at (79.430587,33.039063) {};
    \node [circle,fill,scale=0.25] (25) at (90.244977,48.344735) {};
    \node [circle,fill,scale=0.25] (24) at (99.999999,49.999999) {};
    \node [circle,fill,scale=0.25] (23) at (25.005516,93.301700) {};
    \node [circle,fill,scale=0.25] (22) at (59.126019,86.448907) {};
    \node [circle,fill,scale=0.25] (21) at (42.220259,65.912601) {};
    \node [circle,fill,scale=0.25] (20) at (47.186050,61.531670) {};
    \node [circle,fill,scale=0.25] (19) at (51.776648,60.825423) {};
    \node [circle,fill,scale=0.25] (18) at (51.754578,48.885454) {};
    \node [circle,fill,scale=0.25] (17) at (48.090928,39.020083) {};
    \node [circle,fill,scale=0.25] (16) at (45.376295,24.387551) {};
    \node [circle,fill,scale=0.25] (15) at (63.021407,28.735377) {};
    \node [circle,fill,scale=0.25] (14) at (73.008164,36.768923) {};
    \node [circle,fill,scale=0.25] (13) at (82.101081,59.401896) {};
    \node [circle,fill,scale=0.25] (12) at (75.005515,93.301700) {};
    \node [circle,fill,scale=0.25] (11) at (54.469211,75.446921) {};
    \node [circle,fill,scale=0.25] (10) at (59.037738,65.747075) {};
    \node [circle,fill,scale=0.25] (9) at (58.452879,57.029352) {};
    \node [circle,fill,scale=0.25] (8) at (52.648421,38.280732) {};
    \node [circle,fill,scale=0.25] (7) at (57.702492,34.010150) {};
    \node [circle,fill,scale=0.25] (6) at (74.939307,49.459279) {};
    \node [circle,fill,scale=0.25] (5) at (77.002867,76.175237) {};
    \node [circle,fill,scale=0.25] (4) at (63.926285,74.244094) {};
    \node [circle,fill,scale=0.25] (3) at (61.388214,53.442947) {};
    \node [circle,fill,scale=0.25] (2) at (67.689251,51.390420) {};
    \node [circle,fill,scale=0.25] (1) at (69.796953,66.839549) {};
    \draw [black] (48) to (46);
    \draw [black] (48) to (43);
    \draw [black] (48) to (47);
    \draw [black] (47) to (40);
    \draw [black] (47) to (41);
    \draw [black] (46) to (37);
    \draw [black] (46) to (45);
    \draw [black] (45) to (36);
    \draw [black] (45) to (44);
    \draw [black] (44) to (34);
    \draw [black] (44) to (43);
    \draw [black] (43) to (33);
    \draw [black] (42) to (32);
    \draw [black] (42) to (31);
    \draw [black] (42) to (30);
    \draw [black] (41) to (32);
    \draw [black] (41) to (29);
    \draw [black] (41) to (33);
    \draw [black] (40) to (37);
    \draw [black] (40) to (28);
    \draw [black] (40) to (38);
    \draw [black] (39) to (26);
    \draw [black] (39) to (38);
    \draw [black] (39) to (27);
    \draw [black] (38) to (24);
    \draw [black] (37) to (23);
    \draw [black] (36) to (35);
    \draw [black] (36) to (23);
    \draw [black] (35) to (22);
    \draw [black] (35) to (34);
    \draw [black] (34) to (21);
    \draw [black] (33) to (21);
    \draw [black] (32) to (20);
    \draw [black] (31) to (18);
    \draw [black] (31) to (19);
    \draw [black] (30) to (29);
    \draw [black] (30) to (17);
    \draw [black] (29) to (16);
    \draw [black] (28) to (16);
    \draw [black] (28) to (27);
    \draw [black] (27) to (15);
    \draw [black] (26) to (25);
    \draw [black] (26) to (14);
    \draw [black] (25) to (13);
    \draw [black] (25) to (24);
    \draw [black] (24) to (12);
    \draw [black] (23) to (12);
    \draw [black] (22) to (11);
    \draw [black] (22) to (12);
    \draw [black] (21) to (11);
    \draw [black] (20) to (11);
    \draw [black] (20) to (19);
    \draw [black] (19) to (10);
    \draw [black] (18) to (9);
    \draw [black] (18) to (17);
    \draw [black] (17) to (8);
    \draw [black] (16) to (7);
    \draw [black] (16) to (8);
    \draw [black] (15) to (14);
    \draw [black] (15) to (7);
    \draw [black] (14) to (6);
    \draw [black] (13) to (5);
    \draw [black] (13) to (6);
    \draw [black] (12) to (5);
    \draw [black] (11) to (4);
    \draw [black] (10) to (9);
    \draw [black] (10) to (4);
    \draw [black] (9) to (3);
    \draw [black] (8) to (3);
    \draw [black] (7) to (2);
    \draw [black] (6) to (2);
    \draw [black] (5) to (1);
    \draw [black] (4) to (1);
    \draw [black] (3) to (1);
    \draw [black] (2) to (1);
\end{tikzpicture}

\caption{Two depictions of the planar hypohamiltonian $K_2$-hamiltonian graph $G_{48}$ on 48 vertices obtained from three copies of $J_{18}$.}
\label{fig:48v}
\end{center}
\end{figure}
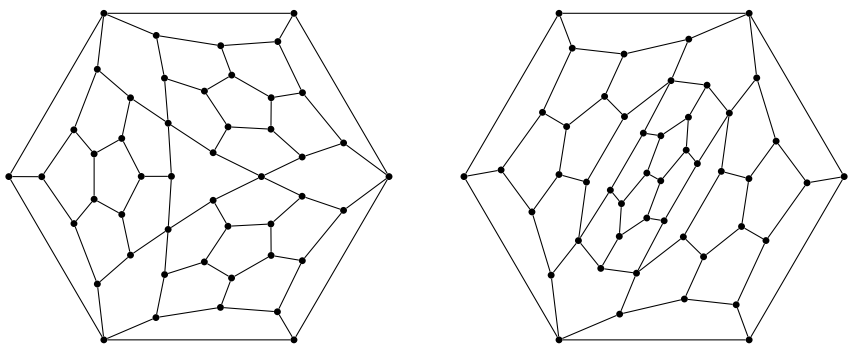

We provide a short alternative proof of Corollary~\ref{cor:infinite_fam} based only on the hypohamiltonicity and $K_2$-hypohamiltonicity of $G_{48}$ (which could be determined easily using a computer, without the framework laid out in Theorem~\ref{thm:gamma} and Lemma~\ref{lem:j18}). We require the following definition, a result of Thomassen, and a lemma from~\cite{Za21}. In a 2-connected non-hamiltonian graph $G$, we call ${\rm exc}(G) \subset V(G)$, which contains every vertex $v$ of $G$ such that $G - v$ is non-hamiltonian, the set of \emph{exceptional vertices} of $G$.

\begin{lemma}[Thomassen \cite{Th78}]\label{lem:thom}
Let $F_1$ and $F_2$ be disjoint $3$-fragments of hypohamiltonian graphs, not both trivial, with $F_i$ having attachments $X_i$. Then $(F_1,X_1) \: (F_2,X_2)$ is hypohamiltonian.
\end{lemma}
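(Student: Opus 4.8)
\emph{Overview.} The plan is to derive the statement from a handful of structural facts about $3$-fragments of hypohamiltonian graphs (these facts go back to Thomassen~\cite{Th78}), which I would establish first, and then to glue hamiltonian paths across the identified $3$-cut. The single technical device is this: whenever a $3$-element set $\{v_1,v_2,v_3\}$ separates a non-empty vertex set $A$ from the remaining vertices of a graph, the intersection of any hamiltonian cycle with the subgraph induced on $A\cup\{v_1,v_2,v_3\}$ is a disjoint union of paths whose end-vertices all lie in $\{v_1,v_2,v_3\}$ and which together cover $A$; a parity count forces this intersection to have one of exactly two shapes, which I shall call \emph{Type I} (a single path between two of the $v_i$ covering everything except the third, together with that third vertex isolated) and \emph{Type II} (a hamiltonian path of the induced subgraph between two of the $v_i$, passing through the third). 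Applying this to a hypohamiltonian host $H$, to $H-v_i$, and to $H-w$, one obtains for a $3$-fragment $F$ of $H$ with attachments $v_1,v_2,v_3$ and non-attachment set $C$: \textbf{(i)} $F$ has no hamiltonian path between two of its attachments --- otherwise, deleting the ``middle'' attachment $v_i$ from $H$ and restricting a hamiltonian cycle of $H-v_i$ to the side of the cut not meeting $C$ produces a path that closes the one in $F$ into a hamiltonian cycle of $H$; \textbf{(ii)} for each $i$, $F-v_i$ has a hamiltonian path between the other two attachments, since with only two attachments surviving, the restriction of a hamiltonian cycle of $H-v_i$ is forced; \textbf{(iii)} if $F$ is non-trivial then, for every $w\in V(C)$, $F-w$ has a hamiltonian path between two attachments through the third --- indeed the restriction to $F-w$ of a hamiltonian cycle of $H-w$ is of Type I or Type II, and Type I would force the complementary side to be of Type II, contradicting \textbf{(i)} there; and \textbf{(iv)} a non-trivial $F$ is itself hamiltonian, while a trivial $F$ equals $K_{1,3}$ (the unique non-attachment vertex of a trivial $3$-fragment is cubic in $H$, and a two-line surgery shows that a cubic vertex of a hypohamiltonian graph lies in no triangle). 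In \textbf{(i)} and \textbf{(iii)} the complementary side may be a union of several components, but the arguments are insensitive to this.

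\emph{Non-hamiltonicity.} With this in hand, set $G:=(F_1,X_1)\:(F_2,X_2)$, identify the attachments, write $X=\{v_1,v_2,v_3\}$ for the resulting $3$-cut of $G$ and $C_i:=F_i-X$, and assume without loss of generality (using ``not both trivial'') that $F_1$ is non-trivial; note $X$ separates $C_1$ from $C_2$ in $G$. A hamiltonian cycle of $G$ would restrict on each of the two sides $F_1,F_2$ to a Type I or Type II configuration. A Type II restriction on either side is a hamiltonian path of that fragment between two attachments, impossible by \textbf{(i)}; and a Type I restriction on the $F_1$-side forces the attachment isolated there to receive both of its cycle-edges from the $F_2$-side, whence the restriction on the $F_2$-side is of Type II --- again impossible. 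Hence $G$ is non-hamiltonian.

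\emph{Hamiltonicity of every $G-w$.} I would distinguish cases. If $w\in X$, say $w=v_3$: glue a hamiltonian $v_1v_2$-path of $F_1-v_3$ to a hamiltonian $v_1v_2$-path of $F_2-v_3$, both provided by \textbf{(ii)} (when $F_2=K_{1,3}$ the latter is simply the path through $v_1$, the centre, and $v_2$). If $w\in V(C_1)$: take a hamiltonian path of $F_1-w$ from $v_i$ to $v_j$ through $v_k$ by \textbf{(iii)}, and a $v_iv_j$-path of $F_2$ covering $V(F_2)\setminus\{v_k\}$ by \textbf{(ii)} applied to $F_2$ with $v_k$ deleted; these two paths meet only in $\{v_i,v_j\}$, so their union is a hamiltonian cycle of $G-w$. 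The case $w\in V(C_2)$ with $F_2$ non-trivial is symmetric. Finally, if $F_2$ is trivial (hence $F_2=K_{1,3}$, with centre $c$) and $w=c$, then $G-c=F_1$, which is hamiltonian by \textbf{(iv)}. Together with the previous step, this finishes the proof.

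\emph{Main obstacle.} The delicate point is fact \textbf{(iv)} for non-trivial fragments, namely that a non-trivial $3$-fragment of a hypohamiltonian graph is hamiltonian in its own right: deleting a vertex outside $F$ and applying the dichotomy only yields a path covering all of $F$ but one attachment, and \textbf{(i)} actively prevents upgrading such a path to a hamiltonian cycle of $F$ in the obvious way, so this has to be extracted from the hypohamiltonicity of $H$ by a more careful argument (or cited from~\cite{Th78}). Everything else --- the Type I/Type II dichotomy, facts \textbf{(i)}--\textbf{(iii)}, and all of the gluings --- is routine bookkeeping once the dichotomy is set up.
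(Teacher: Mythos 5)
First, note that the paper does not prove this lemma at all: it is quoted from Thomassen~\cite{Th78} and used as a black box, so there is no proof in the paper to compare your route against; what follows is an assessment of your argument on its own terms. Your skeleton is the standard one and most of it is sound: the Type I/Type II dichotomy for the intersection of a hamiltonian cycle with one side of a $3$-cut, facts \textbf{(i)}--\textbf{(iii)} with the proofs you indicate (including the observation that the complementary side may be disconnected), fact \textbf{(B)} that a trivial $3$-fragment is exactly $K_{1,3}$ (your ``two-line surgery'' works: in a hamiltonian cycle of $H-b$ the cubic vertex $w$ is forced to use the edge $wa$, and $b$, being adjacent to both $a$ and $w$, can be inserted into that edge, giving a hamiltonian cycle of $H$), and all of the gluings producing hamiltonian cycles of $G-w$ for $w$ an attachment or an inner vertex of a fragment.

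The genuine gap is exactly the point you flag, and it cannot be discharged the way you suggest. When $F_2$ is trivial with centre $c$, hypohamiltonicity of $G$ \emph{requires} $G-c=F_1$ to be hamiltonian, so your fact \textbf{(iv)} for non-trivial fragments is not an auxiliary lemma but is literally equivalent to the remaining case of the statement (given \textbf{(B)}, one may always take $F_2$ to be the trivial fragment of the Petersen graph). Writing ``or cited from~\cite{Th78}'' is therefore circular: \cite{Th78} is the very source of the lemma you are proving. Moreover, the information extractable from single-vertex deletions in the host $H_1$ --- which is all that hypohamiltonicity provides --- only ever yields, on the $F_1$-side, hamiltonian paths of $F_1-v_k$ between the other two attachments (fact \textbf{(ii)}) or of $F_1-u$ through the third attachment (fact \textbf{(iii)}); it never yields a spanning cycle of $F_1$, and these path properties do not abstractly imply hamiltonicity of $F_1$. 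So a new idea is genuinely needed here, and your proposal does not contain it; as written, your proof is complete only when both fragments are non-trivial (which, incidentally, covers every application made in this paper). A smaller point to tighten: in the non-hamiltonicity argument, an edge of $G$ joining two attachments lies in the induced subgraph on \emph{both} sides, so ``Type II on side $i$'' is a priori a hamiltonian path of $G[V(F_i)]$ rather than of $F_i$; the fix is easy (such an edge belongs to $E(F_1)\cup E(F_2)$, and appending it to the path on a side that owns it produces a hamiltonian path of that fragment between two attachments, contradicting \textbf{(i)}), but it should be said, since nothing prevents a $3$-cut of a hypohamiltonian graph from containing an edge when the fragment is non-trivial.
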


\begin{lemma}[\cite{Za21}]\label{lem:carol}
Let $G_1$ and $G_2$ be disjoint $K_2$-hypohamiltonian graphs. For $i\in \{1, 
2\}$, 
let $G_i$ contain a $3$-cut $X_i$ and $X_i$-fragments $F_i$ and $F'_i$ such 
that for each $x \in X_i$ there is a hamiltonian path in $F_i - x$ and in $F'_i 
- x$ between the two vertices of $X_i - x$. This is fulfilled e.g.\ when $X_i$ 
is non-trivial, or ${\rm exc}(G_i) \cap X_i = \emptyset$. Then, if both $F_1$ 
and $F_2$ are non-trivial, or both $F_i$ and $F'_{3-i}$ are trivial, $(F_1,X_1) 
\: (F_2,X_2)$ is $K_2$-hypohamiltonian.
\end{lemma}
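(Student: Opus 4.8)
The plan is to adapt to the $K_2$-setting the trace analysis underlying Thomassen's dot-product results (cf.\ Lemma~\ref{lem:thom}). Write $H:=(F_1,X_1)\:(F_2,X_2)$, let $X=\{x_1,x_2,x_3\}$ denote its attachment set (the common image of $X_1$ and of $X_2$ under the identification), write $\mathrm{int}(\cdot)$ for the interior of a fragment (its vertex set minus its attachment set), and for $i\in\{1,2\}$ let $F'_i$ be the part of $G_i$ complementary to $F_i$, so $G_i=(F_i,X_i)\:(F'_i,X_i)$. Then $V(H)$ is the disjoint union of the connected sets $\mathrm{int}(F_1)$, $\mathrm{int}(F_2)$ and $X$, one has $E(H)=E(F_1)\cup E(F_2)$, and $X$ is a $3$-cut of $H$. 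I would first prove a \emph{trace lemma}: whenever a $3$-fragment $K$ with attachments $X$ lies in a graph in which $X$ is a $3$-cut, every hamiltonian cycle of that graph meets $K$ in a single path that covers $\mathrm{int}(K)$ and has both endpoints in $X$; hence this path is of \emph{type $a$} --- a hamiltonian $x_jx_k$-path of $K-x_\ell$ --- or of \emph{type $b$} --- a hamiltonian $x_jx_k$-path of $K$ passing through $x_\ell$, where $\{j,k,\ell\}=\{1,2,3\}$. The proof is the familiar degree count: interior vertices have degree $2$ in the trace, so it is a disjoint union of paths with all endpoints in $X$, and there can be at most $\lfloor|X|/2\rfloor=1$ of them; the same statement holds with $X$ shrunk accordingly when one or two attachments are deleted too.

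Next I would record two facts. \emph{(O1): no type-$b$ paths.} Neither $F_i$ nor $F'_i$ admits a type-$b$ hamiltonian path, for gluing a type-$b$ hamiltonian $x_jx_k$-path of $F_i$ (through $x_\ell$) to the hamiltonian $x_jx_k$-path of $F'_i-x_\ell$ provided by the hypothesis would yield a hamiltonian cycle of $G_i$, contradicting that $G_i$ is non-hamiltonian (and symmetrically with the roles of $F_i$ and $F'_i$ interchanged). \emph{(O2): no chord in a $3$-cut.} A $K_2$-hamiltonian graph has no edge inside any of its $3$-cuts, since deleting two of the three cut-vertices would otherwise leave the third as a cut-vertex (or disconnect the graph), violating $K_2$-hamiltonicity; in particular $H$ has no edge inside $X$, and whenever $F_1$ or $F_2$ is trivial the absence of chords forces it to be a claw $K_{1,3}$. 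Finally I would dispose of the hypothesis clause ``both $F_i$ and $F'_{3-i}$ are trivial'': there $H$ is obtained from one of $G_1,G_2$ by exchanging a claw fragment for an identical claw, hence is isomorphic to a $K_2$-hypohamiltonian graph and we are done. From now on assume $F_1$ and $F_2$ non-trivial.

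To see that $H$ is non-hamiltonian, suppose $\mathfrak h$ is a hamiltonian cycle of $H$. By the trace lemma and (O1) its traces on $F_1$ and $F_2$ are both of type $a$, say a hamiltonian $x_jx_k$-path of $F_1-x_\ell$ and a hamiltonian $x_{j'}x_{k'}$-path of $F_2-x_{\ell'}$, and $\mathfrak h$ is precisely their union. But two $2$-element subsets of the $3$-element set $X$ either coincide --- and then the union is a cycle that misses the remaining vertex --- or share exactly one vertex --- and then the union is a spanning path, which by (O2) has no chord in $X$ available to close it into a cycle. In either case $\mathfrak h$ cannot be a spanning cycle of $H$, a contradiction.

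It remains to show that $H-u-v$ is hamiltonian for every edge $uv\in E(H)$. By symmetry assume $uv\in E(F_1)$, so $uv\in E(G_1)$ and, by (O2), $u$ and $v$ are not both in $X$. Since $G_1$ is $K_2$-hamiltonian, $G_1-u-v$ has a hamiltonian cycle $\mathfrak g$. I would examine the trace of $\mathfrak g$ on $F'_1$: it is a single path covering $\mathrm{int}(F'_1)$ with endpoints in $X$ (minus the deleted attachment, if $v\in X$), and by (O1) it is not of type $b$, so it is a hamiltonian $x_jx_k$-path of $F'_1-x_\ell$ for suitable $x_j,x_k,x_\ell$ (with $x_\ell=v$ when $v\in X$). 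I would then swap this path for the hamiltonian $x_jx_k$-path of $F_2-x_\ell$ granted by the hypothesis on $F_2$, keeping the trace of $\mathfrak g$ on $F_1-u-v$ untouched; since the two original traces met precisely in their shared endpoints $x_j,x_k$ and the substituted path covers the very same attachments, the outcome is a hamiltonian cycle of $H-u-v$. A handful of degenerate configurations --- a fragment whose interior is exhausted by $\{u,v\}$, say --- must still be dismissed, but each of these would already contradict $K_2$-hamiltonicity of $G_1$ through the same trace argument. The main obstacle is (O1): without knowing that the relevant traces in $F'_1$ and in $F_2$ cannot be of type $b$, the transplantation has nothing compatible to land on, since the hypotheses only supply type-$a$ hamiltonian paths in those fragments; establishing (O1) cleanly, and combining it with (O2) to keep the deleted edge off the cut, is where the real work lies.
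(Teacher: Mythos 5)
You should note first that this paper never proves Lemma~\ref{lem:carol}: it is quoted from the first article of the series \cite{Za21}, so there is no in-paper proof to compare against, and your argument has to stand on its own. Judged that way, your route is the natural one and, in its main lines, sound: the trace lemma, the observation (O1) that a ``type-$b$'' spanning path of any of the four fragments would close up, together with the hypothesised path in the complementary fragment, into a hamiltonian cycle of $G_1$ or $G_2$, and the path exchange that replaces the trace on $F'_1$ of a hamiltonian cycle of $G_1-u-v$ by the hypothesised path in $F_2$ (this is the same machinery behind Lemma~\ref{lem:thom}). Two repairs are needed in the write-up. The smaller one: you invoke (O2) for $H$ itself, which is circular since $H$ is exactly what you are trying to prove $K_2$-hamiltonian; apply (O2) to $G_1$ and $G_2$ instead, which suffices because every edge of $H$ inside $X$ would be an edge of $G_1$ inside $X_1$ or of $G_2$ inside $X_2$, and note that identifying a trivial fragment as a claw also uses the $3$-connectivity of $K_2$-hypohamiltonian graphs, not only the absence of chords.

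The genuine gap is the degenerate case you flag and then dismiss with ``the same trace argument'', namely ${\rm int}(F_1)=\{u,v\}$ (and symmetrically for $F_2$). There your substitution really does break down: the hamiltonian cycle of $G_1-u-v$ lies entirely inside $F'_1$, so its trace on $F'_1$ is a cycle, not a type-$a$ path, and $H-u-v$ is literally the graph $F_2$, whose hamiltonicity is not among the hypotheses --- so the case can neither be absorbed into the main argument nor waved off as an immediate contradiction with $K_2$-hamiltonicity. It can, however, be excluded, and this needs a short extra argument: since $G_1$ is $3$-connected, $u$ has at least two neighbours in $X_1=\{a,b,c\}$; applying $K_2$-hamiltonicity to the adjacent pairs $\{u,a\}$ and $\{u,b\}$ forces (via the degree-two requirement on the surviving interior vertex $v$) that $v$ is adjacent to all of $a,b,c$, and then the pairs $\{v,a\}$, $\{v,b\}$ force the same for $u$. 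Consequently $a\,u\,c\,v\,b$ is a hamiltonian $ab$-path of $F_1$ through $c$, i.e.\ a type-$b$ path, and gluing it to the hypothesised hamiltonian $ab$-path of $F'_1-c$ yields a hamiltonian cycle of $G_1$, contradicting its non-hamiltonicity. With this exclusion (and its mirror image for $F_2$) inserted, your proof is complete.
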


\begin{proof}[Second proof of Corollary~\ref{cor:infinite_fam}.]
As $G_{48}$ is hypohamiltonian, ${\rm exc}(G_{48}) = \emptyset$. Consider a cubic vertex $v$ in $G_{48}$ and let $F$ and $F'$ denote two copies of the non-trivial $N(v)$-fragment of $G_{48}$ isomorphic to $G_{48} - v$. Let $X$ ($X'$) be the 3-cut in $G_{48}$ corresponding to $N(v)$ in $F$ ($F'$). By Lemma~\ref{lem:carol}, $G := (F,X) \: (F',X')$ is $K_2$-hypohamiltonian; here we make use of the fact that, since $G_{48}$ is hypohamiltonian, Lemma~\ref{lem:thom} implies that $G$ is hypohamiltonian so ${\rm exc}(G) = \emptyset$. By construction $G$ contains many cubic vertices, so we may iterate this procedure ad infinitum.
\end{proof}




%

Now that we know of infinitely many planar $K_2$-hypohamiltonian graphs, one 
can ask how quickly the number of pairwise non-isomorphic such graphs grows as 
a function of the graphs' order. Indeed, even leaving planarity aside, no 
non-trivial lower bound was known hitherto. Using the aforementioned lemmas, we 
can prove the following. 

\begin{theorem}
The number of planar hypohamiltonian $K_2$-hamiltonian graphs grows at least exponentially.
\end{theorem}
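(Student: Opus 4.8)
The plan is to combine the two gluing lemmas just stated (Lemma~\ref{lem:thom} of Thomassen and Lemma~\ref{lem:carol} from~\cite{Za21}) with a standard counting argument on a rooted binary-tree family of graphs built by iterated dot products, exactly in the spirit of the classical proofs that hypohamiltonian or snark families grow exponentially. First I would fix the planar hypohamiltonian $K_2$-hamiltonian graph $G_{48}$ and observe, as in the second proof of Corollary~\ref{cor:infinite_fam}, that every dot product $(F,X)\:(F',X')$ of two copies of the non-trivial $N(v)$-fragment obtained from a cubic vertex $v$ is again planar (because $J_{18}$, and hence $G_{48}$, has a planar embedding with the relevant attachments cofacial, so the identifications of the $\:$-operation preserve planarity), hypohamiltonian (Lemma~\ref{lem:thom}, since $\mathrm{exc}(G_{48})=\emptyset$ and the fragments are non-trivial), and $K_2$-hypohamiltonian (Lemma~\ref{lem:carol}, whose hypotheses are met because $G_{48}$ is hypohamiltonian so $\mathrm{exc}\cap X=\emptyset$, and both fragments are non-trivial). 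The key structural point is that the dot product of two copies of an $(n-1)$-vertex fragment has $2(n-1)-3 = 2n-5$ vertices but, more importantly, still contains lots of cubic vertices (each original cubic vertex other than the root survives), so the construction can be iterated and the resulting graphs again lie in the desired planar, hypohamiltonian, $K_2$-hamiltonian class.

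Next I would set up the recursive family carefully enough to count it. Let $\mathcal{F}$ be the smallest family of planar graphs containing $G_{48}$ and closed under the operation ``pick two members $G_1,G_2\in\mathcal{F}$ (possibly equal, up to isomorphism), pick a cubic vertex $v_i$ in $G_i$, and form the dot product of the fragments $G_i-v_i$ along their $N(v_i)$-attachments''; by the previous paragraph every member of $\mathcal{F}$ is a planar hypohamiltonian $K_2$-hamiltonian graph. A member built from $t$ copies of $G_{48}$ has order $48t - 3(t-1) = 45t + 3$, so order grows linearly in the number of ``building blocks''. To get an exponential lower bound on the number of pairwise non-isomorphic graphs of a given order, I would restrict to a clean subfamily: for instance, build a ``path''-shaped or ``caterpillar''-shaped dot-product tree, attaching building blocks one at a time at a distinguished cubic vertex, where at each step one has a genuine binary choice — e.g.\ choosing which of two non-equivalent cubic vertices of $J_{18}$ (or of $G_{48}$) to use as the next root, or inserting one of two distinct small gadgets. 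Each such sequence of $t$ binary choices yields a graph of the same order $45t+3$ (or a fixed linear function of $t$), so there are $2^{\Omega(t)} = 2^{\Omega(n)}$ such graphs; this is the same bookkeeping used in Thomassen's and Collier--Schmeichel's exponential constructions, and it gives the claimed alternative proof of the Collier--Schmeichel result mentioned in the introduction.

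The last step is to bound the number of isomorphisms among these $2^{\Omega(t)}$ graphs, so that the count of isomorphism classes is still exponential. Here I would argue that the decomposition of a member of the subfamily into its $J_{18}$- (or $G_{48}$-) building blocks is essentially canonical: the $3$-cuts used in the dot products are the small cuts of the graph, the building blocks are recoverable as the fragments, and hence an isomorphism of two members must induce an isomorphism of the underlying labelled/ordered sequences of choices. Since each building block is small and has a bounded automorphism group, only a bounded number (independent of $t$, e.g.\ at most $2$ from reversing a path-shaped decomposition) of distinct choice-sequences can yield isomorphic graphs, so the number of isomorphism classes among graphs of order $45t+3$ is at least $2^{\Omega(t)}/O(1) = 2^{\Omega(n)}$.

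I expect the main obstacle to be precisely this last rigidity argument: one must verify that the $3$-connectivity (or near-$3$-connectivity) of the members of $\mathcal{F}$ forces the separating triples used in the dot products to be the \emph{only} relevant $3$-cuts, so that the building-block decomposition — and hence the sequence of binary choices — is reconstructible up to a bounded ambiguity. Once $J_{18}$'s automorphisms and the cut structure of $G_{48}$ are pinned down (e.g.\ by checking $G_{48}$ is $3$-connected and its non-trivial $3$-cuts are exactly the copies of $N(v)$ used in the construction), the counting is routine. The non-hamiltonicity, $K_2$-hamiltonicity, and planarity are all handed to us for free by Lemmas~\ref{lem:thom} and~\ref{lem:carol} and the cofacial-attachments remark, so the substance of the proof is entirely in organising the recursion and the isomorphism bound.
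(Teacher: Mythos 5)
Your overall strategy is the paper's: build linear chains of blocks glued along $3$-cuts using Lemmas~\ref{lem:thom} and~\ref{lem:carol}, make one binary choice per block, and then argue that the block decomposition is recoverable from the $3$-cut structure so that distinct choice sequences give non-isomorphic graphs. The first half of your plan (closure of the planar hypohamiltonian $K_2$-hamiltonian class under the gluing of vertex-deleted fragments, with planarity preserved by cofacial attachments) is exactly the paper's second proof of Corollary~\ref{cor:infinite_fam} and is fine. However, the decisive step is missing. You explicitly defer both the existence of a binary choice that is provably recorded in the isomorphism type and the rigidity argument, and the concrete choices you propose make that step genuinely problematic. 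The paper resolves it by constructing two explicit \emph{non-isomorphic} blocks $A$ and $B$ of the same order, obtained from the $52$-vertex graph $G_{52}$ of Figure~\ref{fig:52v} by deleting two cubic vertices at distance $3$ on the outer face, and by forming $K_{1,3} \: F_1 \: \cdots \: F_k \: K_{1,3}$ with $F_i\in\{A,B\}$; the crucial rigidity fact is that $G_{52}$ has exactly \emph{one} non-cubic vertex, so by Proposition~1(ii) of \cite{Za21} it has only trivial $3$-cuts, whence the chain has exactly $k-1$ non-trivial $3$-cuts (the gluing cuts), and component-size bookkeeping forces any isomorphism to preserve or reverse the chain, giving at least $2^k/2$ isomorphism classes per order. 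Your proposal to use $G_{48}$ as the building block undercuts precisely this mechanism: $G_{48}$ has six degree-$4$ vertices (the identified outer vertices of its three $J_{18}$-copies), so the ``too few non-cubic vertices, hence only trivial $3$-cuts'' argument is unavailable, and each gluing along $N(v)$ creates three further attachment vertices of degree at least $4$; you would have to verify separately that no non-trivial $3$-cuts other than the gluing cuts occur. Moreover, your binary choice (``which of two non-equivalent cubic vertices to use as the next root'', or unspecified ``gadgets'') is not shown to produce non-isomorphic glued graphs even after the decomposition is recovered --- this distinguishability is exactly what the paper's $A\not\cong B$ supplies, and it is the content of the theorem rather than routine bookkeeping.

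Two smaller points. First, your order count is off: each application of $\:$ to two vertex-deleted fragments loses five vertices (the two deleted cubic vertices plus the three identifications), so a chain of $t$ copies of $G_{48}$ has $43t+5$ vertices, not $48t-3(t-1)=45t+3$; this does not affect the linear-growth claim but should be fixed. Second, note that the paper caps its chains with two copies of $K_{1,3}$ so that each partial chain $K_{1,3}\:F_1$ is literally a vertex-deleted subgraph of $G_{52}$, which is what lets Lemmas~\ref{lem:thom} and~\ref{lem:carol} be applied inductively without re-verifying hypotheses at every step; if you keep bare fragments at the ends you must track the fragment conditions of Lemma~\ref{lem:carol} explicitly through the induction.
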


\begin{proof}
Let $A$ and $B$ be the two graphs shown in 
Figure~\ref{fig:expGrowthBuildingBlocks}.
They are non-isomorphic, of the same order, each obtained by removing two cubic vertices $v,w$ from the planar hypohamiltonian $K_2$-hamiltonian graph $G_{52}$ shown in Figure~\ref{fig:52v} from Section~\ref{subsect:planar}, where $v$ and $w$ lie on the boundary of the infinite face and are at distance 3. Consider
$$G := K_{1,3} \: F_1 \: F_2 \: \ldots \: F_k \: K_{1,3},$$
where each $F_i$ is $A$ or $B$, the operation $\:$ is performed such that the corresponding copies of $N(v)$ and $N(w)$ are identified, and in the case of $K_{1,3}$, its three leaves are used for the identification. We abbreviate this convention by $(\dagger)$.

Since $A$ and $B$ as well as $K_{1,3}$ are planar, it is clear that the 
identifications behind the application of $\:$ can be performed such that $G$ 
is also planar. We now show that $G$ is hypohamiltonian and 
$K_2$-hypohamiltonian. By $(\dagger)$, we will suppress mentioning explicitly 
the triples of vertices to which the identifications behind $\:$ are applied, 
in order to improve readability. Clearly, since both $K_{1,3} \: F_1$ and 
$F_2 
\: K_{1,3}$ are vertex-deleted subgraphs of $G_{52}$, which itself is a 
hypohamiltonian $K_2$-hamiltonian graph, by Lemmas~\ref{lem:thom} 
and~\ref{lem:carol}, the graph $K_{1,3} \: F_1 \: F_2 \: K_{1,3}$ is 
hypohamiltonian and $K_2$-hypohamiltonian. Using the same argument, applying 
$\:$ to $K_{1,3} \: F_1 \: F_2$ and $F_3 \: K_{1,3}$ yields the 
hypohamiltonian 
$K_2$-hamiltonian graph $K_{1,3} \: F_1 \: F_2 \: F_3 \: K_{1,3}$. Iterating 
this procedure yields that $G$ must be hypohamiltonian and 
$K_2$-hypohamiltonian. 

Now we show that if $$G_1:=K_{1,3} \: F^1_1 \: F^1_2 \: \ldots \: F^1_k \: 
K_{1,3} \qquad {\rm and} \qquad G_2:=K_{1,3} \: F^2_1 \: F^2_2 : \ldots \: 
F^2_k 
\: K_{1,3}$$ are isomorphic, then we either have $F^1_i \cong F^2_i$ for all 
$i=1,\ldots, k$ or $F^1_i \cong F^2_{k-i+1}$  for all $i = 1, \ldots, k$, 
yielding an exponential number of pairwise distinct $G$'s.

$G_{52}$ has exactly one non-cubic vertex. For a non-trivial $3$-cut to exist, 
we need at least three such vertices (see Proposition 1 (ii) of \cite{Za21}).
By construction and the fact that $G_{52}$ has only trivial 3-cuts,
$G$ has exactly $k - 1$ non-trivial 3-cuts, namely the ones between the $F_i$'s.
Let $f$ be an isomorphism that maps $G_1$ onto $G_2$ and denote the 3-cut between $F^1_i$ and $F^1_{i+1}$ by $X_i$ and the 3-cut between $F^2_i$ and $F^2_{i+1}$ by $Y_i$. For a subset $Z$ of vertices of $G_1$ let us denote by $f(Z)$ the set of vertices onto which the vertices of $Z$ are mapped by $f$.   
It is obvious that $f(X_1)=Y_i$ for some $i$. Moreover, the possible values 
for 
$i$ are only $1$ and $k$, since the deletion of the vertices of $Y_i$ must 
give 
a two component graph with components of $48$ and $47(k-1)+1$ vertices. If 
$i=1$, then $f(X_2)=Y_2$, because the deletion of the vertices of 
$f(X_1)=Y_1$ 
and $f(X_2)$ must give a three component graph with components of $48$, $44$, 
and $47(k-2)+1$ vertices. The proof that $f(X_j)=Y_j$ in the $i=1$ case is 
similar. Now it is obvious that $F^1_i \cong F^2_i$ for all 
$i=1,\ldots, k$. If $i=k$, then the same argument shows that 
$F^1_i \cong F^2_{k-i+1}$ for all $i = 1, \ldots, k$, finishing the proof.
\end{proof}

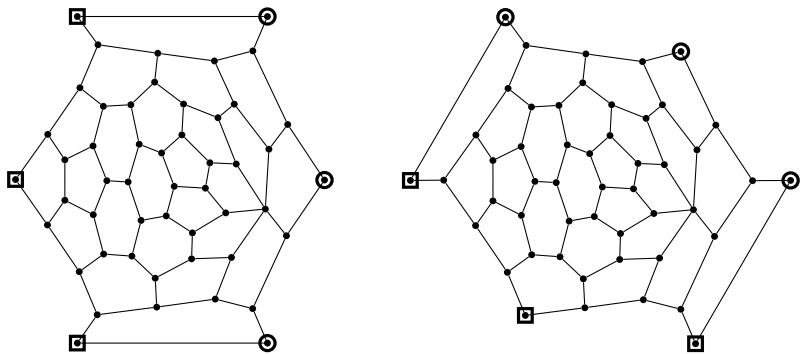
\begin{figure}[!htb]
	\begin{center}
		\begin{tikzpicture}[scale=0.05]
			\definecolor{marked}{rgb}{0.25,0.5,0.25}
			\node [circle,fill,scale=0.25] (52) at (38.356969,49.427872) {};
			\node [circle,fill,scale=0.25] (51) at (47.158925,57.114914) {};
			\node [circle,fill,scale=0.25] (50) at (41.770171,39.227383) {};
			\node [circle,fill,scale=0.25] (49) at (32.762836,49.760391) {};
			\node [circle,fill,scale=0.25] (48) at (41.271394,59.432763) {};
			\node [circle,fill,scale=0.25] (47) at (52.498778,61.916870) {};
			\node [circle,fill,scale=0.25] (46) at (50.493888,48.264058) {};
			\node [circle,fill,scale=0.25] (45) at (48.371639,40.430317) {};
			\node [circle,fill,scale=0.25] (44) at (39.344744,29.760391) {};
			\node [circle,fill,scale=0.25] (43) at (29.193154,40.743276) {};
			\node [circle,fill,scale=0.25] (42) at (21.721271,44.586797) {};
			\node [circle,fill,scale=0.25] (41) at (21.711492,55.295843) {};
			\node [circle,fill,scale=0.25] (40) at (29.124695,58.982885) {};
			\node [circle,fill,scale=0.25] (39) at (39.061125,69.907091) {};
			\node [circle,fill,scale=0.25] (38) at (52.938876,70.083129) {};
			\node [circle,fill,scale=0.25] (37) at (59.872862,54.513447) {};
			\node [circle,fill,scale=0.25] (36) at (58.679707,47.784841) {};
			\node [circle,fill,scale=0.25] (35) at (55.266504,35.921760) {};
			\node [circle,fill,scale=0.25] (34) at (55.051345,29.046455) {};
			\node [circle,fill,scale=0.25] (33) at (45.486553,23.911980) {};
			\node [circle,fill,scale=0.25] (32) at (31.911981,30.308068) {};
			\node [circle,fill,scale=0.25] (31) at (17.144255,38.024450) {};
			\node [circle,fill,scale=0.25] (30) at (17.232275,62.053789) {};
			\node [circle,fill,scale=0.25] (29) at (31.853302,69.486552) {};
			\node [circle,fill,scale=0.25] (28) at (45.330074,75.911979) {};
			\node [circle,fill,scale=0.25] (27) at (61.985331,66.464548) {};
			\node [circle,fill,scale=0.25] (26) at (66.797067,54.190709) {};
			\node [circle,fill,scale=0.25] (25) at (64.039121,41.232274) {};
			\node [circle,fill,scale=0.25] (24) at (65.535453,29.408313) {};
			\node [circle,fill,scale=0.25] (23) at (45.877751,16.244499) {};
			\node [circle,fill,scale=0.25] (22) at (25.506113,25.633252) {};
			\node [circle,fill,scale=0.25] (21) at (8.753056,50.073349) {};
			\node [circle,fill,scale=0.25] (20) at (25.682151,74.415648) {};
			\node [circle,fill,scale=0.25] (19) at (46.161370,83.559901) {};
			\node [circle,fill,scale=0.25] (18) at (66.278729,70.063570) {};
			\node [circle,fill,scale=0.25] (17) at (75.383863,58.112469) {};
			\node [circle,fill,scale=0.25] (16) at (74.444988,42.249388) {};
			\node [circle,fill,scale=0.25] (15) at (61.271395,18.366748) {};
			\node [circle,fill,scale=0.25] (14) at (30.239609,14.210270) {};
			\node [circle,fill,scale=0.25] (12) at (30.435208,85.819070) {};
			\node [circle,fill,scale=0.25] (11) at (61.056235,81.515893) {};
			\node [circle,fill,scale=0.25] (10) at (80.332518,64.674816) {};
			\node [circle,fill,scale=0.25] (9) at (80.000001,35.168704) {};
			\node [circle,fill,scale=0.25] (8) at (71.119805,15.843520) {};
			\node [circle,fill,scale=0.25] (7) at (24.997555,6.699266) {};
			\node [circle,fill,scale=0.25] (6) at (24.997555,93.300733) {};
			\node [circle,fill,scale=0.25] (5) at (71.158924,84.185820) {};
			\node [circle,fill,scale=0.25] (4) at (90.004891,49.936430) {};
			\node [circle,fill,scale=0.25] (3) at (75.002446,6.699266) {};
			\node [circle,fill,scale=0.25] (2) at (75.002446,93.300733) {};
			\draw [black] (52) to (48);
			\draw [black] (52) to (50);
			\draw [black] (52) to (49);
			\draw [black] (51) to (47);
			\draw [black] (51) to (46);
			\draw [black] (51) to (48);
			\draw [black] (50) to (44);
			\draw [black] (50) to (45);
			\draw [black] (49) to (43);
			\draw [black] (49) to (40);
			\draw [black] (48) to (39);
			\draw [black] (47) to (38);
			\draw [black] (47) to (37);
			\draw [black] (46) to (36);
			\draw [black] (46) to (45);
			\draw [black] (45) to (35);
			\draw [black] (44) to (32);
			\draw [black] (44) to (33);
			\draw [black] (43) to (42);
			\draw [black] (43) to (32);
			\draw [black] (42) to (31);
			\draw [black] (42) to (41);
			\draw [black] (41) to (30);
			\draw [black] (41) to (40);
			\draw [black] (40) to (29);
			\draw [black] (39) to (28);
			\draw [black] (39) to (29);
			\draw [black] (38) to (27);
			\draw [black] (38) to (28);
			\draw [black] (37) to (36);
			\draw [black] (37) to (26);
			\draw [black] (36) to (25);
			\draw [black] (35) to (34);
			\draw [black] (35) to (25);
			\draw [black] (34) to (24);
			\draw [black] (34) to (33);
			\draw [black] (33) to (23);
			\draw [black] (32) to (22);
			\draw [black] (31) to (21);
			\draw [black] (31) to (22);
			\draw [black] (30) to (20);
			\draw [black] (30) to (21);
			\draw [black] (29) to (20);
			\draw [black] (28) to (19);
			\draw [black] (27) to (18);
			\draw [black] (27) to (26);
			\draw [black] (26) to (16);
			\draw [black] (25) to (16);
			\draw [black] (24) to (15);
			\draw [black] (24) to (16);
			\draw [black] (23) to (14);
			\draw [black] (23) to (15);
			\draw [black] (22) to (14);
			\draw [black] (20) to (12);
			\draw [black] (19) to (11);
			\draw [black] (19) to (12);
			\draw [black] (18) to (17);
			\draw [black] (18) to (11);
			\draw [black] (17) to (10);
			\draw [black] (17) to (16);
			\draw [black] (16) to (9);
			\draw [black] (15) to (8);
			\draw [black] (14) to (7);
			\draw [black] (12) to (6);
			\draw [black] (11) to (5);
			\draw [black] (10) to (4);
			\draw [black] (10) to (5);
			\draw [black] (9) to (8);
			\draw [black] (9) to (4);
			\draw [black] (8) to (3);
			\draw [black] (7) to (3);
			\draw [black] (6) to (2);
			\draw [black] (5) to (2);
			\path[draw=black, very thick] (21) +(-1.8,-1.8) 
			rectangle +(1.8,1.8);
			\path[draw=black, very thick] (7) +(-1.8,-1.8) 
			rectangle +(1.8,1.8);
			\path[draw=black, very thick] (6) +(-1.8,-1.8) 
			rectangle +(1.8,1.8);
			\path[draw=black,very thick] (4) circle[radius=2];
			\path[draw=black,very thick] (3) circle[radius=2];
			\path[draw=black,very thick] (2) circle[radius=2];
		\end{tikzpicture}
	\qquad
	\begin{tikzpicture}[scale=0.05]
		\definecolor{marked}{rgb}{0.25,0.5,0.25}
		\node [circle,fill,scale=0.25] (52) at (38.356969,49.427872) {};
		\node [circle,fill,scale=0.25] (51) at (47.158925,57.114914) {};
		\node [circle,fill,scale=0.25] (50) at (41.770171,39.227383) {};
		\node [circle,fill,scale=0.25] (49) at (32.762836,49.760391) {};
		\node [circle,fill,scale=0.25] (48) at (41.271394,59.432763) {};
		\node [circle,fill,scale=0.25] (47) at (52.498778,61.916870) {};
		\node [circle,fill,scale=0.25] (46) at (50.493888,48.264058) {};
		\node [circle,fill,scale=0.25] (45) at (48.371639,40.430317) {};
		\node [circle,fill,scale=0.25] (44) at (39.344744,29.760391) {};
		\node [circle,fill,scale=0.25] (43) at (29.193154,40.743276) {};
		\node [circle,fill,scale=0.25] (42) at (21.721271,44.586797) {};
		\node [circle,fill,scale=0.25] (41) at (21.711492,55.295843) {};
		\node [circle,fill,scale=0.25] (40) at (29.124695,58.982885) {};
		\node [circle,fill,scale=0.25] (39) at (39.061125,69.907091) {};
		\node [circle,fill,scale=0.25] (38) at (52.938876,70.083129) {};
		\node [circle,fill,scale=0.25] (37) at (59.872862,54.513447) {};
		\node [circle,fill,scale=0.25] (36) at (58.679707,47.784841) {};
		\node [circle,fill,scale=0.25] (35) at (55.266504,35.921760) {};
		\node [circle,fill,scale=0.25] (34) at (55.051345,29.046455) {};
		\node [circle,fill,scale=0.25] (33) at (45.486553,23.911980) {};
		\node [circle,fill,scale=0.25] (32) at (31.911981,30.308068) {};
		\node [circle,fill,scale=0.25] (31) at (17.144255,38.024450) {};
		\node [circle,fill,scale=0.25] (30) at (17.232275,62.053789) {};
		\node [circle,fill,scale=0.25] (29) at (31.853302,69.486552) {};
		\node [circle,fill,scale=0.25] (28) at (45.330074,75.911979) {};
		\node [circle,fill,scale=0.25] (27) at (61.985331,66.464548) {};
		\node [circle,fill,scale=0.25] (26) at (66.797067,54.190709) {};
		\node [circle,fill,scale=0.25] (25) at (64.039121,41.232274) {};
		\node [circle,fill,scale=0.25] (24) at (65.535453,29.408313) {};
		\node [circle,fill,scale=0.25] (23) at (45.877751,16.244499) {};
		\node [circle,fill,scale=0.25] (22) at (25.506113,25.633252) {};
		\node [circle,fill,scale=0.25] (21) at (8.753056,50.073349) {};
		\node [circle,fill,scale=0.25] (20) at (25.682151,74.415648) {};
		\node [circle,fill,scale=0.25] (19) at (46.161370,83.559901) {};
		\node [circle,fill,scale=0.25] (18) at (66.278729,70.063570) {};
		\node [circle,fill,scale=0.25] (17) at (75.383863,58.112469) {};
		\node [circle,fill,scale=0.25] (16) at (74.444988,42.249388) {};
		\node [circle,fill,scale=0.25] (15) at (61.271395,18.366748) {};
		\node [circle,fill,scale=0.25] (14) at (30.239609,14.210270) {};
		\node [circle,fill,scale=0.25] (13) at (0.000000,50.004890) {};
		\node [circle,fill,scale=0.25] (12) at (30.435208,85.819070) {};
		\node [circle,fill,scale=0.25] (11) at (61.056235,81.515893) {};
		\node [circle,fill,scale=0.25] (10) at (80.332518,64.674816) {};
		\node [circle,fill,scale=0.25] (9) at (80.000001,35.168704) {};
		\node [circle,fill,scale=0.25] (8) at (71.119805,15.843520) {};
		\node [circle,fill,scale=0.25] (6) at (24.997555,93.300733) {};
		\node [circle,fill,scale=0.25] (5) at (71.158924,84.185820) {};
		\node [circle,fill,scale=0.25] (4) at (90.004891,49.936430) {};
		\node [circle,fill,scale=0.25] (3) at (75.002446,6.699266) {};
		\node [circle,fill,scale=0.25] (1) at (99.999999,50.004890) {};
		\draw [black] (52) to (48);
		\draw [black] (52) to (50);
		\draw [black] (52) to (49);
		\draw [black] (51) to (47);
		\draw [black] (51) to (46);
		\draw [black] (51) to (48);
		\draw [black] (50) to (44);
		\draw [black] (50) to (45);
		\draw [black] (49) to (43);
		\draw [black] (49) to (40);
		\draw [black] (48) to (39);
		\draw [black] (47) to (38);
		\draw [black] (47) to (37);
		\draw [black] (46) to (36);
		\draw [black] (46) to (45);
		\draw [black] (45) to (35);
		\draw [black] (44) to (32);
		\draw [black] (44) to (33);
		\draw [black] (43) to (42);
		\draw [black] (43) to (32);
		\draw [black] (42) to (31);
		\draw [black] (42) to (41);
		\draw [black] (41) to (30);
		\draw [black] (41) to (40);
		\draw [black] (40) to (29);
		\draw [black] (39) to (28);
		\draw [black] (39) to (29);
		\draw [black] (38) to (27);
		\draw [black] (38) to (28);
		\draw [black] (37) to (36);
		\draw [black] (37) to (26);
		\draw [black] (36) to (25);
		\draw [black] (35) to (34);
		\draw [black] (35) to (25);
		\draw [black] (34) to (24);
		\draw [black] (34) to (33);
		\draw [black] (33) to (23);
		\draw [black] (32) to (22);
		\draw [black] (31) to (21);
		\draw [black] (31) to (22);
		\draw [black] (30) to (20);
		\draw [black] (30) to (21);
		\draw [black] (29) to (20);
		\draw [black] (28) to (19);
		\draw [black] (27) to (18);
		\draw [black] (27) to (26);
		\draw [black] (26) to (16);
		\draw [black] (25) to (16);
		\draw [black] (24) to (15);
		\draw [black] (24) to (16);
		\draw [black] (23) to (14);
		\draw [black] (23) to (15);
		\draw [black] (22) to (14);
		\draw [black] (21) to (13);
		\draw [black] (20) to (12);
		\draw [black] (19) to (11);
		\draw [black] (19) to (12);
		\draw [black] (18) to (17);
		\draw [black] (18) to (11);
		\draw [black] (17) to (10);
		\draw [black] (17) to (16);
		\draw [black] (16) to (9);
		\draw [black] (15) to (8);
		\draw [black] (13) to (6);
		\draw [black] (12) to (6);
		\draw [black] (11) to (5);
		\draw [black] (10) to (4);
		\draw [black] (10) to (5);
		\draw [black] (9) to (8);
		\draw [black] (9) to (4);
		\draw [black] (8) to (3);
		\draw [black] (4) to (1);
		\draw [black] (3) to (1);
		\path[draw=black, very thick] (14) +(-1.8,-1.8) 
		rectangle +(1.8,1.8);
		\path[draw=black, very thick] (13) +(-1.8,-1.8) 
		rectangle +(1.8,1.8);
		\path[draw=black, very thick] (3) +(-1.8,-1.8) 
		rectangle +(1.8,1.8);
		\path[draw=black,very thick] (6) circle[radius=2];
		\path[draw=black,very thick] (5) circle[radius=2];
		\path[draw=black,very thick] (1) circle[radius=2];
	\end{tikzpicture}
	\end{center}
	\caption{Graph $A$ (left) and graph $B$ (right) are two non-isomorphic 
	graphs obtained from $G_{52}$ by removing two cubic vertices. The 
	neighbours 
	of these removed vertices are indicated by squares (for the first vertex) 
	and 
	circles (for the second vertex).
}
	\label{fig:expGrowthBuildingBlocks}
\end{figure}

We note that this also gives an alternative proof of Collier and Schmeichel's theorem stating that the number of hypohamiltonian graphs grows at least exponentially~\cite{CS77}. (In fact, here we show that this even holds restricted to planar graphs.)

One might wonder if there are further, not necessarily planar suitable cells 
that are both $K_1$- and $K_2$-cells (which are preferably smaller than the 
quintuple $(J_{18},a,b,c,d)$). A related question is whether there exist 
$K_1$-cells in which all inner vertices, i.e.\ all vertices except $a,b,c,d$, 
have degree at least 4. This would solve an old question of Thomassen, namely 
whether 
hypohamiltonian graphs of minimum degree at least 4 exist~\cite{Th78}. (We note 
that all $K_2$-hypohamiltonian graphs we shall encounter in the remainder of 
this text have minimum degree $3$.)
Using such cells might even allow to find 4-connected hypohamiltonian graphs, 
whose existence is another open question of Thomassen~\cite{Th78} and one of 
the most important unresolved problems  related to hypohamiltonicity, and the 
behaviour of longest cycle intersections in general. 
We recall that infinitely many 4-connected non-hamiltonian graphs exist in 
which exactly one vertex has the property that its removal yields a 
non-hamiltonian graph~\cite{Za15}, and that there are infinite families of 
planar hypohamiltonian graphs containing a constant number of cubic vertices 
(note that hypohamiltonian graphs have minimum degree at least~3), based on an 
operation of Thomassen~\cite[p.~41]{Th81}.


Using the generator \verb|geng|~\cite{MP14}, which can generate 
all connected (simple) graphs of a given order (and lower bound on the girth), 
in combination with our program from the proof of Lemma~\ref{lem:j18}, we 
searched for other suitable cells, $K_1$-cells, as well as $K_2$-cells other 
than $(J_{18},a,b,c,d)$. The results of these computations are summarised in 
the following observations and Table~\ref{table:counts_suitable_cells_g5}. The 
suitable cells up to 18 vertices reported in this table can be obtained from 
the database of interesting graphs from the \textit{House of 
Graphs}~\cite{CDG23} by searching for the keywords ``suitable cell''.

\begin{observation}
There are no suitable cells of order at most $11$ and none of girth at least $4$ of order $12$ and $13$. The smallest suitable cells of girth at least $5$ have order $16$ and their exact counts are listed in Table~\ref{table:counts_suitable_cells_g5}.
\end{observation}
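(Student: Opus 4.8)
The plan is to settle the statement by an exhaustive computer search, cut down to a manageable size by a few structural observations. First I would restrict the host graphs: the underlying graph of a suitable cell is connected (Property~1.1 needs a Hamiltonian $ab$-path) and, in fact, $2$-connected. Indeed, if $w$ were a cutvertex, then since $w$ can be neither an inner vertex (it could not be an end-vertex of the Hamiltonian $ab$-path of Property~1.1) nor an outer vertex (a similar short argument), every Hamiltonian path of $G$ has its two end-vertices in different components of $G-w$; the four good outer pairs of Property~1.1 then force $a,d$ to lie in one component of $G-w$ and $b,c$ in the other, so the $ab$-path and the $cd$-path of Property~1.3 both pass through $w$, contradicting that their vertex sets partition $V(G)$. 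Together with the fact, already noted in the excerpt, that a suitable cell has at least five vertices, this means it suffices to run \texttt{geng} from \texttt{nauty}~\cite{MP14} over all $2$-connected graphs on $n$ vertices for $5\le n\le 11$ (all girths), separately over those of girth at least $4$ on $12$ and $13$ vertices, and over those of girth at least $5$ on $14$, $15$ and $16$ vertices.

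For each generated graph $G$ I would decide whether some labelling of four vertices makes it a suitable cell. The only expensive primitives are ``does $H$ have a Hamiltonian $uv$-path?'' and ``do there exist a $uv$-path and an $xy$-path whose vertex sets partition $V(H)$?'', both handled by plain backtracking, which is trivial on graphs of this size. The efficient way is to precompute, once per graph, the \emph{good-pair relation} on $V(G)$ and on each $G-v$; then Properties~1.1, 1.2 and~1.5 become table look-ups, and in particular a necessary condition is that the good-pair relation restricted to $\{a,b,c,d\}$ is exactly the $4$-cycle $a\,b\,d\,c$, which already leaves very few ordered $4$-tuples to test. Over these survivors, reduced further modulo the automorphism group of $G$ and the Klein four-group $\{\mathrm{id},(ab)(cd),(ac)(bd),(ad)(bc)\}$ of symmetries of Properties~1.1--1.6, I would check the remaining conditions~1.3, 1.4 and~1.6 directly. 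The search then returns no suitable cell for $n\le 11$, none of girth at least $4$ for $n\in\{12,13\}$, none of girth at least $5$ for $n\in\{14,15\}$, and for $n=16$ and girth at least $5$ precisely the graphs counted in Table~\ref{table:counts_suitable_cells_g5}.

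The main obstacle is scale combined with trust: the number of connected graphs on $11$ vertices already runs into the billions, so the per-graph test must fail fast---this is exactly the purpose of the precomputed good-pair relation and the ``restricted $4$-cycle'' filter---and the girth restrictions at orders $12$ and $13$ are imposed precisely because the corresponding all-girth searches are out of reach. For correctness I would (i) re-run the program on $(J_{18},a,b,c,d)$ and confirm it reproduces Lemmas~\ref{lem:egy} and~\ref{lem:j18}, whose conclusions are independently checked by hand in the Appendix, (ii) validate the Hamiltonian-path primitive against a separate implementation, and (iii) regenerate the girth-$5$, order-$16$ list by an independent pipeline to cross-check the counts. Finally, depositing the suitable cells of order at most $18$ found in this way in the House of Graphs~\cite{CDG23}, as already announced, lets the positive part of the statement be audited independently.
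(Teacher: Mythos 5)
Your proposal is correct and matches the paper's approach: the observation is established purely computationally, by exhaustively generating the relevant graphs with \texttt{geng} (with the stated girth lower bounds) and testing each candidate quadruple of outer vertices against Properties~1.1--1.6 using the same kind of program as in the proof of Lemma~\ref{lem:j18}, with the resulting suitable cells made available in the House of Graphs for independent auditing. Your additional pruning steps (the $2$-connectivity restriction---whose justification you should tidy up, since the correct argument is that a cutvertex cannot be an end-vertex of a hamiltonian path, hence cannot be an outer vertex, while an inner cutvertex forces $a,d$ and $b,c$ into different components and contradicts Property~1.3---and the Klein four-group symmetry reduction) are sound but are efficiency refinements rather than a different method.
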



\begin{table}[ht!]
\centering
\begin{tabular}{ l | ccccc}
Order 		& $\leq 15$ & 16 & 17 & 18 & 19\\
\hline
Number of cells & 0 & 4 & 28 & 365 & 6\,861\\
\end{tabular}
\caption{Counts of all suitable cells of girth at least 5 of a given order up 
to $19$ vertices.}
\label{table:counts_suitable_cells_g5}
\end{table}

\begin{observation}
There is exactly one suitable cell of girth at least $5$ up to $19$ vertices 
which is a $K_1$-cell and exactly one of those suitable cells is a $K_2$-cell. 
In both cases it is the same cell, namely $(J_{18},a,b,c,d)$ from 
Figure~\ref{fig:j18}.
\end{observation}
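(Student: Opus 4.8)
The proof is computational and builds directly on the enumeration that produced Table~\ref{table:counts_suitable_cells_g5}. The plan is as follows. Using \verb|geng| with its girth option, I would generate every connected graph of girth at least $5$ on $n$ vertices for $n \le 19$, and for each such graph $G$ and each ordered quadruple $a,b,c,d$ of pairwise distinct vertices — taken up to the action of $\operatorname{Aut}(G)$ on quadruples, so that only one representative per orbit is examined — I would run the program from the proof of Lemma~\ref{lem:j18} to test Properties~1.1--1.6 and thereby decide whether $(G,a,b,c,d)$ is a suitable cell. By the previous observation this produces exactly $0+4+28+365+6\,861 = 7\,258$ suitable cells in the range under consideration.

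Next, for each of these $7\,258$ suitable cells I would apply the same program to decide the two remaining properties. For the $K_1$-cell test, one deletes each inner vertex in turn and checks, via Hamiltonian-path queries in the resulting graph, whether at least one of the bad pairs listed in Properties~1.2, 1.4, and 1.5(a) has become good. For the $K_2$-cell test, one verifies Properties~2.1--2.5: Property~2.1 requires examining every pair of adjacent inner vertices, and Properties~2.2--2.5 require examining every neighbour of each outer vertex. The outcome of these runs is that exactly one of the $7\,258$ suitable cells passes the $K_1$-cell test, exactly one passes the $K_2$-cell test, and in both cases it is $(J_{18},a,b,c,d)$. Since Lemma~\ref{lem:j18} already establishes that this particular cell is both a $K_1$- and a $K_2$-cell, the only genuinely new content of the computation is the uniqueness assertion, namely that no other suitable cell of girth at least $5$ on at most $19$ vertices has either property.

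The obstacle here is not mathematical but one of scale and of trustworthiness of the computation. The number of connected graphs of girth at least $5$ on $19$ vertices is large, and for each one there are on the order of $n^4$ candidate outer quadruples and, for each that survives to a later stage, several Hamiltonian-path instances in graphs of order up to $19$ to solve; making this tractable relies on the automorphism reduction above together with aggressive early termination as soon as any of Properties~1.1--1.6 (or later $2.1$--$2.5$) fails. To guard against implementation errors, the source code is made available~\cite{Re22}, and — since the only positive case is $(J_{18},a,b,c,d)$ — it can be corroborated independently: the relevant spanning and vertex-/edge-deleted path configurations in $J_{18}$ are drawn explicitly in Section~\ref{app:j18} of the Appendix, and the corresponding non-existence statements also follow from the Grinberg-type arguments of Lemma~\ref{lem:egy}, so the crucial claim that this cell really is both a $K_1$- and a $K_2$-cell is verifiable by hand. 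Finally, the underlying enumeration behind Table~\ref{table:counts_suitable_cells_g5} can be cross-checked against the House of Graphs database, closing the remaining gap.
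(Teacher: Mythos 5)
Your proposal matches the paper's own justification of this observation: the paper likewise obtains it purely computationally, by running \verb|geng| (restricted to girth at least $5$ and order at most $19$) together with the cell-testing program from the proof of Lemma~\ref{lem:j18}, and then checking the $K_1$- and $K_2$-cell conditions on the resulting suitable cells, with the positive case $(J_{18},a,b,c,d)$ independently verifiable via the Appendix figures. The implementation details you add (automorphism reduction of quadruples, early termination, cross-checking against the House of Graphs data) are sensible refinements but do not change the approach.
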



\begin{corollary}
The quintuple $(J_{18},a,b,c,d)$ from Figure~\ref{fig:j18} is the smallest 
$K_1$- and $K_2$-cell of girth at least $5$. Moreover, any other $K_1$- or 
$K_2$-cell of girth at least $5$ must have at least $20$ vertices.
\end{corollary}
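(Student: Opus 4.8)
The plan is to obtain the corollary as an immediate consequence of the two preceding observations together with Lemma~\ref{lem:j18}, so the only real work is to assemble these ingredients correctly. First I would record the trivial but essential fact that, by the definitions in Section~\ref{sect:variations_grunbaum}, every $K_1$-cell and every $K_2$-cell is in particular a suitable cell; hence any result restricting the set of suitable cells of girth at least $5$ automatically restricts the sets of $K_1$- and of $K_2$-cells of girth at least $5$.

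Next I would invoke Lemma~\ref{lem:j18}, which tells us that $(J_{18},a,b,c,d)$ is simultaneously a $K_1$-cell and a $K_2$-cell, and note that $J_{18}$ has girth $5$ and order $18$ (its membership in the class of suitable cells of girth at least $5$ is already implicit in the second observation). Combining this with the second observation --- which asserts that among all suitable cells of girth at least $5$ on at most $19$ vertices, $(J_{18},a,b,c,d)$ is the \emph{only} $K_1$-cell and also the \emph{only} $K_2$-cell --- shows that no $K_1$- or $K_2$-cell of girth at least $5$ has order $16$, $17$, or $19$, and that the only one of order $18$ is $(J_{18},a,b,c,d)$ itself. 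The first observation covers the remaining small orders: there is no suitable cell of order at most $11$, none of girth at least $4$ (a fortiori none of girth at least $5$) of order $12$ or $13$, and the smallest suitable cells of girth at least $5$ have order $16$; in particular there is no $K_1$- or $K_2$-cell of girth at least $5$ on at most $15$ vertices.

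Putting these facts together yields the statement: every $K_1$- or $K_2$-cell of girth at least $5$ either equals $(J_{18},a,b,c,d)$ or has order at least $20$, so $(J_{18},a,b,c,d)$ is the smallest $K_1$- and $K_2$-cell of girth at least $5$ and any other one has at least $20$ vertices. The genuine content sits entirely in the exhaustive computer search underlying the observations --- generating, via \verb|geng|, all connected graphs of girth at least $5$ on up to $19$ vertices and filtering them for the cell properties with the program from Lemma~\ref{lem:j18} --- and the point I would flag as the main subtlety is that the search must reach $19$ rather than merely $18$ vertices: it is exactly the emptiness of the $19$-vertex list (combined with uniqueness on $18$ vertices) that upgrades ``smallest among cells on at most $19$ vertices'' to the unconditional lower bound of $20$ for every other such cell.
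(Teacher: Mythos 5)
Your proposal is correct and matches the paper's (implicit) argument exactly: the corollary is stated in the paper as an immediate consequence of Lemma~\ref{lem:j18} and the two computational observations obtained from the exhaustive \verb|geng| search of girth-$\ge 5$ graphs through order $19$, which is precisely how you assemble it. Your remark that the search must extend to $19$ vertices (with no further $K_1$- or $K_2$-cells found there) in order to obtain the lower bound of $20$ for any other cell is the right point to emphasise.
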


\section{Orders}
\label{sect:orders}

In this section we determine all $K_2$-hypohamiltonian graphs up to a given 
order for certain important classes of graphs and characterise the orders for 
which they exist. In particular, in 
Sections~\ref{subsect:general}--\ref{subsect:planar_cubic}, we deal with 
general graphs, cubic graphs (including snarks), planar graphs, and cubic 
planar graphs, respectively.
These results were obtained by using a combination of computational methods and theoretical arguments.

For the computational part, we amongst others implemented an efficient
algorithm to test if a given graph is $K_2$-hypohamiltonian. The core of our 
algorithm is a straightforward procedure which searches for hamiltonian cycles 
in a graph.
Starting with a path of two edges centered at a vertex of
minimum degree with a chosen end, we recursively grow a path by adding a vertex
to this end in each possible way. Given a path $P$ with fewer than
$n$ vertices, if the first vertex of $P$ is not adjacent to a
vertex not in $P$, or there is a vertex not in $P$ that is
adjacent to fewer than two vertices that are not interior
vertices of $P$, then $P$ cannot be part of a hamiltonian cycle
so we discontinue extending it. If $P$ has $n$ vertices and the ends are adjacent, we have found a hamiltonian cycle.

This algorithm for finding hamiltonian cycles can then be used to determine
whether graphs are hamiltonian, $K_1$-hamiltonian, and $K_2$-hamiltonian.

The source code of this algorithm can be downloaded from
GitHub \cite{Re22} and in the subsections below we describe how we
verified the correctness of our implementation.

\subsection{The general case}
\label{subsect:general}

A characterisation of the orders for which $K_2$-hypohamiltonian graphs exist is a $K_2$-analogue of a natural question---\emph{For which $n$ do there exist hypohamiltonian graphs on $n$ vertices?}---addressed in a series of articles, among which the paper by Aldred, McKay, and Wormald~\cite{AMW97} which completely settles the problem by proving that there is no hypohamiltonian graph on 17 vertices. Thus, the answer to the aforementioned question is that there exists a hypohamiltonian graph of order $n$ if and only if $n \in \{ 10, 13, 15, 16 \}$ or $n \ge 18$ is an integer.

The first author determined that all (seven) hypohamiltonian graphs on at most
16 vertices are $K_2$-hamiltonian, while the last author showed that the
generalised Petersen graph ${\rm GP}(2,11)$ is $K_2$-hamiltonian~\cite{Za21}
(that ${\rm GP}(2,11)$ is hypohamiltonian was shown by Bondy~\cite{Bo72}).
Combining these graphs via Lemmas~\ref{lem:thom} and~\ref{lem:carol}, we obtain
hypohamiltonian $K_2$-hamiltonian graphs of order 10, 13, 15, 16, 18, and every
integer which is at least $20$; a detailed account is given in the proof of
Theorem~\ref{thm:orders_general}. For all other (non-trivial) orders it was an
open question whether a $K_2$-hypohamiltonian graph of that order exists. It is
not difficult to verify that $K_2$-hypohamiltonian graphs are 3-connected.
Hence, their minimum degree is at least $3$.





We used the program \verb|geng|~\cite{MP14} to generate all
connected simple graphs of a given order (with a given lower bound on the
girth and minimum degree) and then filtered these graphs using our program to
check whether they
are $K_2$-hypohamiltonian or not. The results of these
computations are summarised in Table~\ref{table:counts_general} in 
Section~\ref{app:counts_k2hypo} of the Appendix. We verified all results for 
orders 5--12, 14--16 (with girth at least $4$), and 17--19 (with girth at least 
$5$) 
using 
an
independent program. Checking higher orders with the independent program would 
be unfeasible without allocating a lot of computational resources.
Previously, all non-hamiltonian graphs up to 13 vertices were already determined in~\cite{GMZ20}.
The $K_2$-hypohamiltonian graphs from Table~\ref{table:counts_general} can be
downloaded from the database of interesting graphs from the \textit{House of 
Graphs}~\cite{CDG23} by searching for the keywords ``K2-hypohamiltonian''.





\begin{theorem}\label{thm:orders_general}
 The Petersen graph is the only $K_2$-hypohamiltonian graph of order at 
 most~$12$, any $K_2$-hypohamiltonian graph of order~$14$ (if such a graph exists) must have girth~$3$, and any $K_2$-hypohamiltonian graph of order~$17$
 (if such a graph exists) must have girth at most~$4$. Moreover, for any integer $n \in \{10, 13, 15, 16\}$ or $n \geq 18$, there exists a $K_2$-hypohamiltonian graph of order~$n$.
\end{theorem}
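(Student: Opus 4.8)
The plan is to prove Theorem~\ref{thm:orders_general} by splitting it into its several distinct assertions and handling each with the tools already developed. The first assertion---that the Petersen graph is the unique $K_2$-hypohamiltonian graph of order at most~$12$---is verified by exhaustive computer search: one generates with \verb|geng| all connected graphs on $n \le 12$ vertices with minimum degree at least $3$ (legitimate because $K_2$-hypohamiltonian graphs are $3$-connected, hence have minimum degree $\ge 3$), and applies the $K_2$-hypohamiltonicity test described above. The entries of Table~\ref{table:counts_general} record that only one such graph appears, the Petersen graph on $10$ vertices. The second and third assertions (about orders $14$ and $17$) are likewise computational, but restricted: one checks that there is no $K_2$-hypohamiltonian graph of order $14$ with girth $\ge 4$, and none of order $17$ with girth $\ge 5$; both follow from the generated counts in Table~\ref{table:counts_general}. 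The extra restriction on girth is what makes these orders feasible to search while leaving the full girth-$3$ (resp.\ girth-$\le 4$) cases open, which is why the statement is phrased conditionally.

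The substantive part is the last sentence: for every $n \in \{10,13,15,16\}$ and every $n \ge 18$, a $K_2$-hypohamiltonian graph of order~$n$ exists. First I would list the ``seed'' graphs available from the literature and earlier in the excerpt: the $K_2$-hypohamiltonian graphs of orders $10, 13, 15, 16$ (the seven hypohamiltonian graphs on at most $16$ vertices, all of which are $K_2$-hamiltonian by the first author's verification), and the hypohamiltonian $K_2$-hamiltonian graph ${\rm GP}(2,11)$ of order~$18$ (hypohamiltonian by Bondy~\cite{Bo72}, $K_2$-hamiltonian by~\cite{Za21}). This immediately covers $n \in \{10,13,15,16,18\}$. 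For $n = 48$ we already have $G_{48}$ from Corollary~\ref{cor:infinite_fam}, and for $n = 52$ the graph $G_{52}$ from Section~\ref{subsect:planar}. The remaining orders must be produced by the gluing operation $\:$, using Lemma~\ref{lem:thom} (hypohamiltonicity is preserved) together with Lemma~\ref{lem:carol} ($K_2$-hypohamiltonicity is preserved) applied to $3$-fragments of these seed graphs.

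Concretely, for a hypohamiltonian graph $H$ and a cubic vertex $v \in V(H)$, the graph $H - v$ is a non-trivial $N(v)$-fragment, and since $H$ is hypohamiltonian we have ${\rm exc}(H) = \emptyset$, so the hypothesis ``${\rm exc}(H_i) \cap X_i = \emptyset$'' of Lemma~\ref{lem:carol} is satisfied automatically. Hence gluing two such fragments $F_1, F_2$ (both non-trivial) along their attachment $3$-cuts yields, by Lemmas~\ref{lem:thom} and~\ref{lem:carol}, a hypohamiltonian $K_2$-hamiltonian graph on $|V(H_1)| + |V(H_2)| - 3$ vertices; more generally, chaining $t$ fragments from graphs $H_1, \dots, H_t$ gives order $\sum_i |V(H_i)| - 3(t-1)$. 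Using copies of $H - v$ for $H$ the Petersen graph (a $9$-vertex fragment) and for $H = {\rm GP}(2,11)$ (a $17$-vertex fragment), the attainable orders are $9t_1 + 17t_2 - 3(t_1 + t_2 - 1) + \text{(corrections from trivial fragments)}$; I would also allow a trivial fragment ($K_{1,3}$, i.e.\ $K_1$ with its $3$-cut, contributing $+1$ to the vertex count and matching the ``both $F_i$ and $F'_{3-i}$ trivial'' or end-cap clauses of the lemmas). The main obstacle is the standard numerical bookkeeping: one must check that the set $\{\,6t_1 + 14t_2 + 3 + \varepsilon : t_1, t_2 \ge 0,\ \varepsilon \in \{0,1,\dots\}\,\}$ (the precise semigroup generated by the fragment sizes, plus small perturbations from using the $10, 13, 15, 16$-vertex seeds directly and from end-caps) covers every integer $\ge 20$, together with the already-covered values $10, 13, 15, 16, 18$. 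Since $\gcd$ of the available increments is $1$ and we have increments as small as $6$ and $7$ (e.g.\ two Petersen-fragments glued give $+6$ per extra block, a Petersen- and a small seed give $+7$, etc.), a short Chicken-McNugget-style argument closes all sufficiently large orders, and the finitely many small orders $20,21,\dots$ up to the conductor are checked by hand; the detailed case analysis ``is given in the proof'' and amounts to exhibiting, for each residue class, an explicit chain of fragments of the right total order.
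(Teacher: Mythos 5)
Your treatment of the first three assertions (exhaustive generation plus the $K_2$-hypohamiltonicity filter, with the girth restrictions at orders $14$--$16$ and $17$--$19$) matches the paper, and your general strategy for the existence part---glue cubic-vertex-deleted $3$-fragments of the small hypohamiltonian $K_2$-hamiltonian seeds via Lemmas~\ref{lem:thom} and~\ref{lem:carol}, then do numerical bookkeeping---is also the paper's strategy. But there are two concrete problems. First, a factual error: ${\rm GP}(2,11)$ has $22$ vertices, not $18$, so order $18$ is \emph{not} a seed; it has to be produced by the gluing itself (Petersen fragment with the $13$-vertex seed's fragment, order $10+13-5=18$; note also your order formula should be $|V(H_1)|+|V(H_2)|-5$ when $H_i$ denote the original graphs, since each fragment has $|V(H_i)|-1$ vertices and three of them are identified).

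Second, and more seriously, your argument never covers $n=19$, and it cannot: you explicitly list the covered values as $\{10,13,15,16,18\}$ together with ``every integer $\ge 20$'', but the theorem requires every $n\ge 18$. The gap at $19$ is not just an oversight in the bookkeeping---it is unreachable by your construction. Writing each glued order as a sum of seed orders minus $5(k-1)$, equivalently working in the numerical semigroup generated by $\{10-5,13-5,15-5,16-5,22-5\}=\{5,8,10,11,17\}$, one checks that $14$ is not in this semigroup, so $19$ is not attainable from the seeds $10,13,15,16,22$ by any chain of $\:$-operations (indeed $19\equiv 4\pmod 5$ forces at most two seeds with total $24$, which no pair achieves). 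The paper closes exactly this hole with a separately computer-found $19$-vertex hypohamiltonian $K_2$-hamiltonian graph (Figure~\ref{fig:19-vertex}); with the five base graphs of orders $18,19,20,21,22$ (each containing a cubic vertex) in hand, repeatedly gluing on a Petersen fragment adds $5$ each time and covers all $n\ge 18$. Without some additional seed of order $\equiv 4\pmod 5$ below $24$, your Chicken--McNugget argument cannot be repaired, so you need either to import that $19$-vertex example (or an equivalent certificate) or to find another construction for the residue class of $19$.
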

\begin{proof}
The statements from the theorem's first sentence follow directly from our computations summarised in Table~\ref{table:counts_general} in the Appendix. It also gives an independent verification of the first author's computations that there exists an $n$-vertex hypohamiltonian $K_2$-hamiltonian graph for every $n \in \{10, 13, 15, 16\}$.

As shown by the last author in~\cite{Za21}, the generalised Petersen graph ${\rm GP}(2,11)$ on $22$ vertices is a cubic hypohamiltonian $K_2$-hamiltonian graph. Since each of the aforementioned $K_2$-hamiltonian graphs of orders 10, 13, 15, and 16 is also hypohamiltonian, and each has at least five cubic vertices, we can apply Lemmas~\ref{lem:thom} and~\ref{lem:carol} to obtain hypohamiltonian $K_2$-hamiltonian graphs of orders 18, 20, and 21, each containing a cubic vertex. Here we always apply the operation $\:$ to two 3-fragments of hypohamiltonian graphs obtained by removing a cubic vertex of the graph. Thus, when applying $\:$, in each of the underlying hypohamiltonian graphs at most four cubic vertices are removed or rendered non-cubic. Furthermore, note that if the underlying graphs have order $p$ and $q$, respectively, so that the fragments have order $p - 1$ and $q - 1$, respectively, the graph resulting from the application of $\:$ has order $p + q - 5$.

Our computations yield the existence of a hypohamiltonian $K_2$-hamiltonian
graph on 19 vertices, see Figure~\ref{fig:19-vertex}.
Hence, there exist hypohamiltonian
$K_2$-hamiltonian graphs of every order between $18$ and $22$, each containing
a cubic vertex. Combining these graphs, via Lemma~\ref{lem:carol}, with
the Petersen graph a suitable number of times,
  we obtain a $K_2$-hypohamiltonian graph
of every order that is at least~18.
\end{proof}

\begin{figure}[!htb]

	\begin{subfigure}[t]{0.49\linewidth}
		\centering
	\begin{tikzpicture}[rotate=-54]
		
		\node[circle,fill,scale=0.5] (0) at (3.0/1.4,-0.0/1.4) {};
		\node[circle,fill,scale=0.5] (1) at
		(2.7406363729278027/1.4,-1.2202099292274005/1.4) {};
		\node[circle,fill,scale=0.5] (2) at
		(2.0073918190765747/1.4,-2.2294344764321825/1.4) {};
		\node[circle,fill,scale=0.5] (3) at
		(0.9270509831248424/1.4,-2.8531695488854605/1.4) {};
		\node[circle,fill,scale=0.5] (4) at
		(-0.31358538980296/1.4,-2.98356568610482/1.4) {};
		\node[circle,fill,scale=0.5] (5) at
		(-1.4999999999999993/1.4,-2.598076211353316/1.4) {};
		\node[circle,fill,scale=0.5] (6) at
		(-2.427050983124842/1.4,-1.7633557568774196/1.4) {};
		\node[circle,fill,scale=0.5] (7) at
		(-2.934442802201417/1.4,-0.623735072453278/1.4) {};
		\node[circle,fill,scale=0.5] (8) at
		(-2.934442802201417/1.4,0.6237350724532772/1.4) {};
		\node[circle,fill,scale=0.5] (9) at
		(-2.427050983124843/1.4,1.7633557568774192/1.4) {};
		\node[circle,fill,scale=0.5] (10) at
		(-1.5000000000000013/1.4,2.598076211353315/1.4) {};
		\node[circle,fill,scale=0.5] (11) at
		(-0.3135853898029627/1.4,2.9835656861048196/1.4) {};
		\node[circle,fill,scale=0.5] (12) at
		(0.9270509831248417/1.4,2.853169548885461/1.4) {};
		\node[circle,fill,scale=0.5] (13) at
		(2.0073918190765756/1.4,2.229434476432182/1.4) {};
		\node[circle,fill,scale=0.5] (14) at
		(2.7406363729278027/1.4,1.2202099292274005/1.4) {};

 		\node [circle,fill,scale=0.5] (15) at (0,0) {};
 		\node [circle, fill, scale=0.5] (16) at (-0.75, 0.5449068960040206) {};
 		\node [circle, fill, scale=0.5] (17) at (0.6203181864639007,
 		 		0.6889331410629573) {};
 		\node [circle, fill, scale=0.5] (18) at (-0.4635254915624212,
 		 		 		-0.8028497019894527) {};
 		\draw (0) to (1) to (2) to (3) to (4) to (5) to (6) to (7) to (8) to
 		(9) to (10) to (11) to (12) to (13) to (14) to (0);
 		\draw (15) to (16);
 		\draw (15) to (17);
 		\draw (15) to (18);
 		\draw (16) -- (0);
 		\draw (17) -- (1);
 		\draw (18) -- (2);
 		\draw (16) -- (3);
 		\draw (17) -- (4);
 		\draw (18) -- (5);
 		\draw (16) -- (6);
 		\draw (17) -- (7);
 		\draw (18) -- (8);
 		\draw (16) -- (9);
 		\draw (17) -- (10);
 		\draw (18) -- (11);
 		\draw (16) -- (12);
 		\draw (17) -- (13);
 		\draw (18) -- (14);
 		
 	\end{tikzpicture}
 \caption{A hypohamiltonian $K_2$-hamiltonian graph on $19$ vertices. }
 \label{fig:19-vertex}
 \end{subfigure}\hfill
 \begin{subfigure}[t]{0.49\linewidth}
 	\centering
 \begin{tikzpicture}
 	\foreach \i/\name in {{0/ar}, {1/br}, {5/er}}
 	\node[circle,fill,scale=0.5](\name) at ($ 2*({cos(\i*60) + 
 		1/2},{sin(\i*60)}) $) {}; 
 	\foreach \i/\name in {{2/al}, {3/bl}, {4/el}}
 	\node[circle,fill,scale=0.5](\name) at ($ 2*({cos(\i*60) - 
 		1/2},{sin(\i*60)}) $) {};
 	\node[circle,fill,scale=0.5](mt) at ($ 2*(0,{sin(60)}) $) {};
 	\node[circle,fill,scale=0.5](mb1) at ($ 2*(-1/3,{sin(4*60)}) $) {};
 	\node[circle,fill,scale=0.5](mb2) at ($ 2*(1/3,{sin(4*60)}) $) {};
 	\foreach \i/\name in {{0/ari}, {1/bri}, {2/cri}, {4/eri}}
 	\node[circle,fill,scale=0.5](\name) at ($ 
 	1*({cos(\i*72)-(cos(2*72))},{sin(\i*72)}) $) 
 	{};
 	\node[circle,fill,scale=0.5] (dri) at ($ 
 	1*({cos(3*72)-(cos(2*72))+1/5},{sin(3*72)}) 
 	$) {};
 	\foreach \i/\name in {{0/ali}, {1/bli}, {2/cli}, {4/eli}}
 	\node[circle,fill,scale=0.5](\name) at ($ 
 	1*(-{cos(\i*72)+(cos(2*72))},{sin(\i*72)}) 
 	$) {};
 	\node[circle,fill,scale=0.5] (dli) at ($ 
 	1*(-{cos(3*72)+(cos(2*72))-1/5},{sin(3*72)}) 
 	$) {};
 	\foreach \source/\dest in {{ar/ari}, {br/bri}, {er/eri}, {al/bli}, 
 		{bl/ali}, {el/eli}}
 	\draw (\source) to (\dest);
 	\draw (br) to (ar) to (er) to (mb2) to (mb1) to (el) to (bl) to (al) to 
 	(mt) to (br);
 	\draw (ali) to (cli) to (eli) to (bli) to (dli) to (ali);
 	\draw (ari) to (cri) to (eri) to (bri) to (dri) to (ari);
 	\draw (mt) to (cli);
 	\draw (dli) to (mb2);
 	\draw (dri) to (mb1);
 	\path[draw=black, very thick] (mb2) circle[radius=0.2];
 	\path[draw=black, very thick] (mb1) circle[radius=0.2];
 \end{tikzpicture}
\caption{The smallest $K_2$-hypohamiltonian graph of girth $\geq 5$ which 
	is not hypohamiltonian. It has order $18$. The exceptional vertices are 
	marked.}
\label{fig:non-hypoham_K2-hypoham}
\end{subfigure}
	\caption{Two $K_2$-hypohamiltonian graphs.}
	\label{fig:19-vertex_and_non-hypoham_K2-hypoham}
\end{figure}
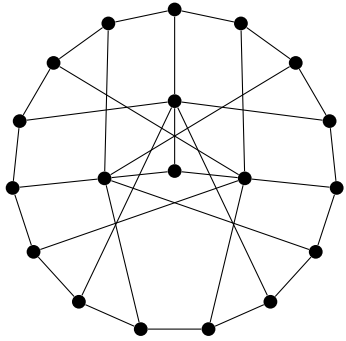
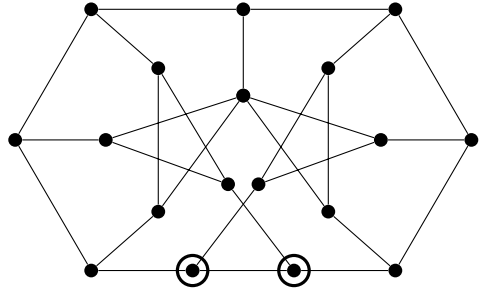

We emphasise that the only integers $n$ for which it is not yet settled whether $K_2$-hypohamiltonian graphs of order $n$ exist are 14 and 17; we leave this as an open problem.

We end this section with a brief remark, which can be directly inferred from 
our computations, on the order of the smallest non-hypohamiltonian 
$K_2$-hypohamiltonian graph. 

\begin{observation}
The smallest $K_2$-hypohamiltonian graph that contains a vertex whose deletion yields a non-hamiltonian graph has order at least $14$ and at most $18$.
\end{observation}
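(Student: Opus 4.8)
The plan is to prove the two bounds separately: an upper bound of $18$ by exhibiting the graph in Figure~\ref{fig:non-hypoham_K2-hypoham}, and a lower bound of $14$ by ruling out all smaller orders using the classification already in hand.

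For the upper bound, I would verify that the $18$-vertex graph depicted in Figure~\ref{fig:non-hypoham_K2-hypoham} is $K_2$-hypohamiltonian of girth at least $5$, and that it possesses an exceptional vertex, i.e.\ a vertex $v$ with $G - v$ non-hamiltonian. Concretely, one checks: (i) $G$ is non-hamiltonian; (ii) for every edge $xy$, the graph $G - x - y$ is hamiltonian; and (iii) there is a vertex $v$ (the two marked vertices in the figure) with $G - v$ non-hamiltonian, so that $G$ is \emph{not} hypohamiltonian. Item~(i) and~(iii) can be done by a direct (finite) case analysis on Hamiltonian cycles through the low-degree structure, or simply by appeal to the computer search described earlier in Section~\ref{sect:orders}; item~(ii) is a finite verification over the $O(|E|)$ edge-deletions, again either by hand using the symmetry of the drawing or by the certified program whose source is on GitHub~\cite{Re22}. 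This yields a $K_2$-hypohamiltonian graph on $18$ vertices that is not hypohamiltonian, establishing the ``at most $18$'' claim.

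For the lower bound, I would invoke Theorem~\ref{thm:orders_general}: the Petersen graph is the \emph{only} $K_2$-hypohamiltonian graph of order at most $12$, and the Petersen graph is hypohamiltonian (indeed $K_1$-hamiltonian). Hence every $K_2$-hypohamiltonian graph of order at most $12$ is hypohamiltonian, so it contains no vertex whose deletion gives a non-hamiltonian graph. Since there is no $K_2$-hypohamiltonian graph on $11$ or fewer vertices other than Petersen (order $10$) and none on $12$ vertices, and none on $13$ vertices that fails to be hypohamiltonian can appear at order $\le 13$ — here I would simply cite that the full list of $K_2$-hypohamiltonian graphs up to order $13$ is recorded in Table~\ref{table:counts_general} and all of them are hypohamiltonian — no non-hypohamiltonian $K_2$-hypohamiltonian graph exists on fewer than $14$ vertices. (If the table does not separately record hypohamiltonicity at order $13$, this single extra bit of data is an immediate by-product of the same exhaustive computation.) Thus the smallest such graph has order at least $14$.

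The main obstacle is not conceptual but bookkeeping: one must be sure that the classification results quoted from Section~\ref{sect:orders} actually distinguish, among the listed $K_2$-hypohamiltonian graphs of small order, those that are hypohamiltonian from those that are not — equivalently, that $\mathrm{exc}(G) = \emptyset$ for every $K_2$-hypohamiltonian $G$ with $|V(G)| \le 13$. Given that the complete lists are available (and downloadable from the House of Graphs~\cite{CDG23}), this is a routine check; and the $18$-vertex witness is explicitly drawn, so both ends of the bound follow once those finite verifications are carried out.
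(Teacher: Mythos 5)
Your proposal is correct and follows essentially the same route as the paper: the lower bound of $14$ is read off from the exhaustive computations (Table~\ref{table:counts_general} together with Theorem~\ref{thm:orders_general} shows every $K_2$-hypohamiltonian graph on at most $13$ vertices is hypohamiltonian, hence has no exceptional vertex), and the upper bound of $18$ is witnessed by the non-hypohamiltonian $K_2$-hypohamiltonian graph of Figure~\ref{fig:non-hypoham_K2-hypoham}. The paper's own justification is exactly this (``directly inferred from our computations'' plus the $18$-vertex witness), and the table's last column already records the hypohamiltonicity data you flag as the one bookkeeping point.
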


 The upper bound of 18 is given by the graph from 
 Figure~\ref{fig:non-hypoham_K2-hypoham}. It is the 
 smallest $K_2$-hypohamiltonian graph of girth at least 5 which is not 
 hypohamiltonian.

\subsection{The cubic case}
\label{subsect:cubic}

\subsubsection{General cubic graphs}
\label{subsubsect:cubic}


Our main motivation for characterising the orders for which cubic $K_2$-hypohamiltonian graphs exist is our goal to give a $K_2$-analogue of
Thomassen's Theorem~5.1 from \cite{Th81}; it characterises all orders for which
cubic hypohamiltonian graphs exist. It is elementary to see that a cubic
$K_2$-hypohamiltonian graph must have girth at least~$5$. Using the program
\verb|Snarkhunter|~\cite{BGM11}, we generated
all cubic graphs of girth at least 5 up to 32 vertices and verified which of
them are $K_2$-hypohamiltonian using the program mentioned at the beginning of
Section~\ref{sect:orders}. Our findings are summarised in
Table~\ref{table:counts_cubic}. All results up to and including $n = 26$ were
verified by an independently written program. The $K_2$-hypohamiltonian graphs
from Table~\ref{table:counts_cubic} can be
downloaded from the database of interesting graphs from the \textit{House of 
Graphs}~\cite{CDG23} by searching for the keywords ``cubic 
K2-hypohamiltonian''.
The counts for cubic hypohamiltonian graphs were obtained
by Aldred, McKay, and Wormald~\cite{AMW97} and improved by the first and last
author~\cite{GZ17}.


\begin{table}[ht!]
\centering
	\begin{tabular}{c | r | r | c | c}
		Order & Total & Non-ham. & $K_2$-hypoham. & hypo- and $K_2$-ham.\\
		\hline
		$10$ & $1$ & $1$ & $1$ & $1$\\
		$12$--$16$ & $60$ & $0$ & $0$ & $0$\\
		$18$ & $455$ & $3$ & $0$ & $0$\\
		$20$ & $5\,783$ & $15$ & $1$ & $1$\\
		$22$ & $90\,938
		$ & $110$ & $3$ & $3$\\
		$24$ & $1\,620\,479
		$ & $1\,130$ & $0$ & $0$\\
		$26$ & $31\,478\,584
		$ & $15\,444$ & $6$ & $5$\\
		$28$ & $656\,783\,890
		$ & $239\,126
		$ & $14$ & $12$\\
		$30$ & $14\,621\,871\,204
		$ & $4\,073\,824
		$ & $15$ & $14$\\
		$32$ & $345\,975\,648\,562
		$ & $75\,458\,941
		$ & $12$ & $12$
	\end{tabular}
\caption{Counts of all graphs of girth at least~5 that are cubic; cubic and
non-hamiltonian; cubic and $K_2$-hypohamiltonian; cubic, hypohamiltonian, and
$K_2$-hypohamiltonian, respectively. Recall that cubic $K_2$-hypohamiltonian graphs
have girth at least 5.
}
\label{table:counts_cubic}
\end{table}


By restricting the generation in Snarkhunter \cite{BGM11} to cubic graphs of 
girth at least 6, we were also able to determine the following.
\begin{observation}
	Up to and including $36$ vertices there are only two 
	$K_2$-hypohamiltonian 
	graphs of girth at least $6$, namely the flower snark $J_7$ and the flower 
	snark 
	$J_9$.
\end{observation}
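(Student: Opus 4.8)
The plan is to settle this by exhaustive computer search, in the same spirit as the results summarised in Table~\ref{table:counts_cubic}; here the statement is to be read for cubic graphs, since the search is carried out over cubic graphs only. First I would record the elementary structural reductions: a $K_2$-hypohamiltonian graph is $3$-connected, so a cubic one has minimum degree exactly $3$, and, as already noted, a cubic $K_2$-hypohamiltonian graph necessarily has girth at least $5$; thus requiring girth at least $6$ is a genuine strengthening. Since cubic graphs have even order and no cubic graph of girth at least $6$ exists on fewer than $14$ vertices (the Heawood graph being the $(3,6)$-cage), it suffices to consider the orders $n \in \{14, 16, \dots, 36\}$.

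For each such $n$ I would use \verb|Snarkhunter|~\cite{BGM11}, run in the mode that exhaustively generates all cubic graphs of a prescribed order with a given lower bound on the girth, to produce the complete list of cubic graphs of girth at least $6$ on $n$ vertices, and then pass each graph through the $K_2$-hypohamiltonicity test described at the beginning of Section~\ref{sect:orders} (source code in \cite{Re22}). The search then returns exactly two graphs, the flower snarks $J_7$ (on $28$ vertices) and $J_9$ (on $36$ vertices); that these two are indeed $K_2$-hypohamiltonian is consistent with their appearance in Table~\ref{table:counts_cubic} and can be re-confirmed directly with the tester. To guard against implementation errors I would cross-check the generation counts of cubic graphs of girth at least $6$ against the known values in the literature, and re-run the filtering step with the independently written program already used for the verifications in Section~\ref{sect:orders}.

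The main obstacle is purely computational feasibility and trustworthiness: the number of cubic graphs of girth at least $6$ grows quickly with $n$, so reaching $n = 36$ relies on the fact that girth-$6$ generation prunes far more aggressively than girth-$5$ generation, and one must be confident that both the generator and the hamiltonicity-based tester are correct---hence the double verification and the count cross-checks. A secondary, minor point worth stating explicitly is why the list does not grow further within the range: the next flower snark $J_{11}$ has $44 > 36$ vertices, and $J_5$ is excluded because it has girth $5$ rather than $6$.
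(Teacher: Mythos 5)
Your proposal matches the paper's approach exactly: the authors also restrict \verb|Snarkhunter|'s generation to cubic graphs of girth at least $6$ up to $36$ vertices and filter the output with their $K_2$-hypohamiltonicity tester, and your reading of the observation as being about cubic graphs (forced by the surrounding cubic subsection and by the existence of a non-cubic girth-$6$ example on $25$ vertices mentioned later) is the intended one. The extra remarks on verification and on why the search starts at order $14$ are sensible additions but do not change the method.
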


%
%

For the next theorem's proof, we will make use of the following definition and result from~\cite{Za21}. Consider a graph $G$ containing a 5-cycle $C = v_0 \ldots v_4v_0$ such that every $v_i$ is cubic. Denote the neighbour of $v_i$ not on $C$ by $v'_i$. Then the cycle $C$ is called \emph{extendable} if for any $i$, taking indices modulo $5$, (i)~there exists a hamiltonian cycle ${\frak h}$ in $G - v_i$ with $v_{i-2}v_{i+2} \notin E({\frak h})$ and (ii)~there exists a hamiltonian cycle ${\frak h}'$ in $G - v'_i$ with ${\frak h}' \cap C = v_{i-2}v_{i-1}v_iv_{i+1}v_{i+2}$.

\begin{lemma}[\cite{Za21}]
	\label{lem:ext5cycle}
	Let $G$ be a $K_2$-hamiltonian graph containing an extendable $5$-cycle.
	Then there exists a $K_2$-hamiltonian graph $G'$ of order $|V(G)| + 20$
	containing an extendable $5$-cycle. If $G$ is non-hamiltonian or cubic,
	then so is $G'$, respectively. If $G$ has girth~$5$, then $G'$ has
	girth~$5$. If $G$ is plane and $C$ a facial cycle in $G$, then $G'$ is
	planar.
\end{lemma}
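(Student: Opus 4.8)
The plan is to obtain $G'$ from $G$ by a local surgery at the extendable $5$-cycle $C=v_0\ldots v_4v_0$. Deleting the vertices $v_0,\ldots,v_4$ from $G$ kills the five edges of $C$ and the five spokes $v_iv_i'$, leaving five ``dangling'' vertices $v_0',\ldots,v_4'$; into this hole I would splice an explicit gadget $H$ on $25$ vertices having five degree-$2$ \emph{terminals} $t_0,\ldots,t_4$, re-joining the graph by the edges $t_iv_i'$. The order then goes up by exactly $25-5=20$, as required. The gadget $H$ is to be chosen as a planar, girth-$5$ graph that is cubic except at its five terminals, built around copies of the Petersen graph (or of Petersen fragments) and arranged with a five-fold cyclic symmetry, so that it carries a distinguished $5$-cycle $C^{\ast}=w_0\ldots w_4w_0$ with every $w_i$ cubic and with off-cycle neighbours $w_i'$, playing in $G'$ exactly the role $C$ plays in $G$. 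I would display $H$ and $C^{\ast}$ in a figure; after that, everything reduces to a few purely local checks on $H$ together with a short list of ``path-system'' (traversability) properties of $H$, verified by hand or, as elsewhere in the paper, by a small computer program.

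Granting such an $H$, the secondary conclusions become bookkeeping. The order count is immediate. If $G$ is cubic then so is $G'$, since $H$ is cubic away from its terminals and each terminal acquires degree $3$ through its interface edge. If $G$ has girth $5$ then so does $G'$, because $H$ has girth $5$, $C^{\ast}$ is a $5$-cycle, and---choosing the terminals pairwise far apart inside $H$---no short cycle is created across the interface. If $G$ is plane with $C$ facial, then $G'$ is planar, because $H$ can be drawn into the pentagonal face bounded by $C$, with its terminals on the outer face and $C^{\ast}$ bounding a face. Finally, $C^{\ast}$ is extendable in $G'$: conditions (i) and (ii) for $C^{\ast}$ are obtained by gluing the hamiltonian cycles guaranteed by the extendability of $C$ in $G$ (more precisely, the path systems they induce on $V(G)\setminus V(C)$) to fixed path systems through $H$ that avoid the prescribed vertex resp.\ edge---these fixed systems being among the path-system properties designed into $H$.

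The substance is the $K_2$-hamiltonicity of $G'$: that $G'-x-y$ is hamiltonian for every edge $xy$. I would split on where $xy$ lies: (a) in $G-C$, away from the interface; (b) inside $H$, including $C^{\ast}$; (c) across the interface. In each case a hamiltonian cycle of $G'-x-y$ is assembled from a ``$G$-side'' path system covering $V(G)\setminus V(C)$ with ends at (some of) the $v_i'$, and an ``$H$-side'' path system covering $V(H)$, minus $x$ and/or $y$, with matching ends at the $t_i$. For (a), the $G$-side piece is read off from a hamiltonian cycle of $G-x-y$ supplied by the $K_2$-hamiltonicity of $G$; it induces one of only a few patterns on the old $C$-with-spokes region, and for each such pattern the required traversal of $H$ is one of the hand-checked ones. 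Cases (b) and (c) are dual: the $H$-side piece is now a hand-checked path system of $H-x-y$ (or of $H-x$ with $y$ lying on an interface edge), while the $G$-side piece with the matching ends is furnished by the extendability of $C$---equivalently by deleting the appropriate pair from $G$ and invoking its $K_2$-hamiltonicity. Since, up to the five-fold symmetry, there are only a handful of non-equivalent edges of $H$ and a handful of non-equivalent induced patterns on $C$, this is a finite (if tedious) verification.

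For the preservation of non-hamiltonicity, suppose $G$ is non-hamiltonian and, for contradiction, that $G'$ has a hamiltonian cycle $\gamma$. Then $\gamma$ meets $V(G)\setminus V(C)$ in a path system and meets $V(H)$ in a path system all of whose ends lie among $t_0,\ldots,t_4$; the decisive property to build into $H$ is that \emph{every} terminal-to-terminal path system that $H$ actually realises projects back to a path system on the original $C$-with-spokes region which, combined with $\gamma\cap(G-C)$, closes up to a hamiltonian cycle of $G$---a contradiction. Thus $H$ must be rich enough to transmit $K_2$-hamiltonicity and both extendability conditions, yet meagre enough to admit no ``spurious'' traversal modes (this is why $H$ wants to be genuinely traversable, like the Petersen graph minus a vertex, rather than an absolute obstruction to hamiltonicity). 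Resolving this tension is precisely the job of the explicit choice of $H$, and it is the main obstacle; once $H$ is pinned down, the remaining work---the local checks, the extendability of $C^{\ast}$, and the $K_2$-hamiltonicity case analysis---is routine and is best delegated to figures and/or a brief computer verification, exactly in the style already used in the paper.
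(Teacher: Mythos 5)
This lemma is not proved in the present paper at all --- it is imported verbatim from~\cite{Za21} --- so the benchmark is the explicit construction given there; and measured against any such benchmark, what you have written is a strategy outline rather than a proof. The entire mathematical content of the statement is the existence of a concrete insertion gadget together with the verification of its traversal properties, and this is exactly what you leave open. You postulate a $25$-vertex patch $H$ with five degree-$2$ terminals that is simultaneously planar, of girth $5$, cubic off the terminals, carries a $5$-cycle $C^{\ast}$ that will be extendable in $G'$, realises enough terminal-to-terminal spanning path systems to transmit $K_2$-hamiltonicity and both extendability conditions, and yet realises \emph{no} spanning traversal whose interface pattern fails to project back to a path system on $C$ plus its spokes (this last property is what preserves non-hamiltonicity, and it is genuinely restrictive: for instance a single spanning $t_it_j$-path of $H$ with $i,j$ at distance $2$ on the pentagon can never project back, since $C_5$ has no hamiltonian path between vertices at distance $2$). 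You yourself identify ``resolving this tension'' as the main obstacle and defer it; until such an $H$ is exhibited and the finitely many checks are actually carried out, the argument assumes precisely what the lemma asserts.

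A second, more specific soft spot: in your cases (b) and (c) you claim the required $G$-side path systems are ``furnished by the extendability of $C$ --- equivalently by deleting the appropriate pair from $G$ and invoking its $K_2$-hamiltonicity.'' But extendability supplies only the two very particular families of cycles in conditions (i) and (ii) (cycles of $G-v_i$ avoiding one prescribed $C$-edge, and cycles of $G-v'_i$ whose trace on $C$ is the path $v_{i-2}v_{i-1}v_iv_{i+1}v_{i+2}$), and $K_2$-hamiltonicity of $G$ applies only to \emph{adjacent} pairs; neither automatically yields an arbitrary pattern of paths covering $V(G)\setminus V(C)$ with prescribed ends among the $v'_j$. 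Whether the patterns your gadget demands are exactly those these hypotheses can deliver is the heart of the matter, and it can only be settled once $H$ is fixed --- which is why in~\cite{Za21} the definition of ``extendable'' is tailored to the specific operation used there. The peripheral claims (order count, cubicity, girth via terminal distances, planarity by drawing $H$ inside the face bounded by $C$) are fine as bookkeeping, but they do not substitute for the missing construction.
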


\begin{theorem}\label{thm:cubK2hypoham}
	There exists a cubic $K_2$-hypohamiltonian graph of order $n$ if and only if $n \in \{10, 20, 22\}$ or $n \ge 26$ is even.
\end{theorem}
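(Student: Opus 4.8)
The plan is to establish the characterization by splitting into a necessity direction and a sufficiency direction, exactly mirroring Thomassen's approach for cubic hypohamiltonian graphs. For the necessity direction, I would first recall that a cubic $K_2$-hypohamiltonian graph must have girth at least $5$ (noted in the text), and then combine this with the computational data in Table~\ref{table:counts_cubic}: the table confirms that among cubic graphs of girth at least $5$ up to $32$ vertices, the only $K_2$-hypohamiltonian ones have orders $10$, $20$, $22$, $26$, $28$, $30$, $32$. In particular there is no such graph of order $12$, $14$, $16$, $18$, or $24$. Since a cubic graph has even order, this already pins down the small cases; the claim ``$n \ge 26$ is even'' then only needs the even orders $24$ and below to be excluded, which the table does, and all larger even orders to be realized, which is the sufficiency part.

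For sufficiency, I would exhibit cubic $K_2$-hypohamiltonian graphs of orders $10$, $20$, $22$, $26$, $28$, $30$, $32$ (the Petersen graph for $n=10$, and the computer-found graphs from Table~\ref{table:counts_cubic} for the others) and then appeal to Lemma~\ref{lem:ext5cycle} to lift each of them by steps of $20$. Concretely, if each of the seven base graphs of orders $26, 28, 30, 32$ together with $20, 22$ contains an \emph{extendable $5$-cycle}, then Lemma~\ref{lem:ext5cycle} (the ``if $G$ is cubic then so is $G'$'' clause, and the ``if $G$ is non-hamiltonian then so is $G'$'' clause) produces cubic $K_2$-hypohamiltonian graphs of every order $n + 20k$. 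Starting from orders $20$ and $22$ we cover $\{20, 22, 40, 42, 60, 62, \dots\}$; starting from $26, 28, 30, 32$ we cover all four residues modulo $8$ (actually all even residues we need), so the union of these arithmetic progressions with common difference $20$ is exactly $\{20, 22\} \cup \{n \ge 26 : n \text{ even}\}$. One must check the arithmetic: from base orders $20$ and the four consecutive even numbers $26, 28, 30, 32$, adding multiples of $20$ yields every even number $\ge 26$ — indeed $\{26, 28, 30, 32\} + 20\mathbb{Z}_{\ge 0}$ already hits $26, 28, 30, 32, 34, \dots$ once we also throw in $20$ (giving $40$, hence $34,36,38$ via $...$) — so I would lay out this covering argument carefully, listing which base order reaches which residue.

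The main obstacle, and the step requiring the most care, is verifying that the base graphs actually contain \emph{extendable} $5$-cycles in the technical sense of the definition preceding Lemma~\ref{lem:ext5cycle} — i.e.\ that for some cubic $5$-cycle $C = v_0\ldots v_4 v_0$ one has, for every $i$, a hamiltonian cycle in $G - v_i$ avoiding the edge $v_{i-2}v_{i+2}$ and a hamiltonian cycle in $G - v'_i$ whose intersection with $C$ is precisely the path $v_{i-2}v_{i-1}v_iv_{i+1}v_{i+2}$. Since these are finite graphs, this is a finite check, and I would delegate it to the same hamiltonicity-testing program described at the start of Section~\ref{sect:orders}; the write-up would simply assert that each base graph was verified by computer to possess an extendable $5$-cycle, pointing to the source code on GitHub~\cite{Re22}. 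A secondary point to handle is the edge case near the boundary: one should confirm that no cubic $K_2$-hypohamiltonian graph of order $24$ exists (so that ``$n \ge 26$'' and not ``$n \ge 24$'' is the correct threshold), which again is read off directly from Table~\ref{table:counts_cubic}, and that orders $12, 14, 16, 18$ are likewise empty. With these pieces in place the ``if and only if'' follows immediately.
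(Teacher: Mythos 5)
There is a genuine gap in your sufficiency argument: the arithmetic of the covering step is wrong. Lemma~\ref{lem:ext5cycle} increases the order by $20$, so to reach every even $n\ge 34$ by repeatedly adding $20$ you need base graphs (each with an extendable $5$-cycle) in \emph{all ten} even residue classes modulo $20$. Your proposed bases $20,22,26,28,30,32$ only cover the residues $0,2,6,8,10,12 \pmod{20}$; the progressions they generate miss every even order congruent to $4,14,16$ or $18$ modulo $20$, in particular $34$, $36$, $38$ and $44$. Your claim that $\{26,28,30,32\}+20\mathbb{Z}_{\ge 0}$ ``already hits $26,28,30,32,34,\dots$'' is false ($26+20=46$), and the parenthetical ``giving $40$, hence $34,36,38$ via $\dots$'' cannot be completed: there is no way to produce $34$ from bases $\le 32$ and $40$ with steps of size $20$.

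The paper closes exactly this gap by supplying additional base graphs: cubic $K_2$-hypohamiltonian graphs of orders $34$ and $36$ coming from the snark counts (Table~\ref{table:counts_snarks}) and of orders $38$, $40$, $44$ coming from the dot-product hypohamiltonian snarks (Table~\ref{table:counts_hypohamiltonian_snarks}), and then verifying by computer that graphs of orders $22,26,28,30,32,34,36,38,40,44$ contain extendable $5$-cycles. These ten orders represent all ten even residues modulo $20$, so Lemma~\ref{lem:ext5cycle} then yields every even $n\ge 34$, while the orders $10,20,22,26,28,30,32$ and the non-existence at $12$--$18$ and $24$ are read off Table~\ref{table:counts_cubic} as you describe. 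Your necessity direction and the small-order bookkeeping are fine; to repair the proof you must either add the base graphs of orders $34$--$44$ (as the paper does) or replace Lemma~\ref{lem:ext5cycle} by a construction with a smaller increment.
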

\begin{proof}
	It follows from the counts in Table~\ref{table:counts_cubic} that there 
	exist cubic $K_2$-hypohamiltonian graphs of order 10, 20, 22, 26, 28, 30, 
	and 32, and that no cubic $K_2$-hypohamiltonian graphs of order 
	12, 14, 16, 18, and 24 exist. Hence, it remains to show that there exist cubic 
	$K_2$-hypohamiltonian graphs of even orders $34$ and upwards. In 
	Table~\ref{table:counts_snarks} in Subsection~\ref{subsubsect:snarks} we 
	give cubic $K_2$-hypohamiltonian graphs on $34$ and $36$ vertices and 
	Table~\ref{table:counts_hypohamiltonian_snarks} presents cubic 
	$K_2$-hypohamiltonian graphs on $38$, $40$, and $44$ vertices. We wrote a 
	computer program which tests if a given graph has an extendable 5-cycle. 
	Its implementation can be found on GitHub \cite{Re22}
	and using this 
	program we 
	verified that there exist cubic $K_2$-hypohamiltonian 
	graphs of order $n \in \{22, 26, 28, 30, 32, 34, 36, 38, 40, 44\}$ that 
	contain an extendable $5$-cycle. 
In the same GitHub repository we included a file with a cubic 
$K_2$-hypohamiltonian graph for each of these orders together with an 
extendable $5$-cycle $C$ and paths which prove that $C$ is indeed an extendable 
$5$-cycle so that this can be verified by hand. 
By Lemma~\ref{lem:ext5cycle} we get an $n$-vertex graph with an extendable $5$-cycle for any even $n\geq 34$, which completes the proof.
\end{proof}

\subsubsection{Snarks}
\label{subsubsect:snarks}

It is common to call a graph a \emph{snark} if it is cubic and cyclically 
4-edge-connected, its chromatic index is 4, and its girth is at least 5. Snarks 
are particularly interesting since for several famous conjectures (such as 
Tutte's 5-flow Conjecture) it has been proven that if the conjecture is false, 
the smallest possible counterexample must be a snark.
The interplay between snarks and hypohamiltonicity has been studied by various authors; we refer the reader to~\cite{GZ18} for further references. Our aim in this section is to study the analogous problem for $K_2$-hypohamiltonicity.

In~\cite{BGHM13} Brinkmann, the first author, H\"agglund, and Markstr\"om  
presented an algorithm to generate snarks, used it to generate all 
such graphs up to 36 vertices, and determined the number of hypohamiltonian 
snarks 
among them (and later this was extended up to order 38 for snarks of girth at 
least 6, see~\cite{BG17}).  Using our program for testing 
$K_2$-hypohamiltonicity, 
we determined which of these snarks are $K_2$-hypohamiltonian. The results can 
be found in Table~\ref{table:counts_snarks} and these graphs can also be 
downloaded from the \textit{House of
Graphs}~\cite{CDG23} at
\url{https://houseofgraphs.org/Snarks}. The counts up to and including $n = 30$ 
vertices were verified by an independently written program. 

\begin{table}[ht!]
	\centering
	\begin{tabular}{c | c | r | r | r | r}
		$n$ & Girth & Total & hypoham. & $K_2$-hypoham. & hypo- and 
		$K_2$-ham.\\
		\hline
		$10$ & $\geq 5$ & $1$ & $1
		$ & $1$ & $1$\\
		$12$--$16$ & $\geq 5$ & $0
		$ & $0$ & $0$ & $0$\\
		$18$ & $\geq 5$ & $2
		$ & $2$ & $0$ & $0$\\ 
		$20$ & $\geq 5$ & $6
		$ & $1$ & $1$ & $1$\\
		$22$ & $\geq 5$ & $20
		$ & $2$ & $2$ & $2$\\
		$24$ & $\geq 5$ & $38
		$ & $0$ & $0$ & $0$\\
		$26$ & $\geq 5$ & $280
		$ & $95$ & $6$ & $5$\\
		$28$ & $\geq 5$ & $2\,900
		$ & $31$& $14$ & $12$\\
		$30$ & $\geq 5$ & $28\,399
		$ & $104$ & $9$ & $9$\\
		$32$ & $\geq 5$ & $293\,059
		$ & $13$ &$11$ & $11$\\
		$34$ & $\geq 5$ & $3\,833\,587
		$ & $31\,198$&$1\,036$ & $936$\\
		$36$ & $\geq 5$ & $60\,167\,732
		$ & $10\,838$ & $3\,849$ & $3008$\\
		\hline
		$38$ & $\geq 6$ & $39$ & $29$ & $20$ & $10$
	\end{tabular}
		\caption{Counts of all snarks, hypohamiltonian snarks, 
		$K_2$-hypohamiltonian snarks, and snarks which are both hypohamiltonian 
		and $K_2$-hypohamiltonian.}
		\label{table:counts_snarks}
\end{table}

In~\cite{GZ18} the first and last author determined all hypohamiltonian snarks 
with at least 38 and at most 44 vertices which can be obtained by taking the 
dot product on two smaller hypohamiltonian snarks. We verified which of these 
hypohamiltonian snarks are $K_2$-hypohamiltonian and the resulting counts 
can be found in Table~\ref{table:counts_hypohamiltonian_snarks}. (We 
needed these graphs for the proof of Theorem~\ref{thm:cubK2hypoham}.)

\begin{table}[ht!]
	\centering
	\begin{tabular}{c | r | r}
		Order & d.p.\ hypoham. & d.p.\ hypo- and $K_2$-ham.\\
		\hline
		$38$ & $51\,431$ & $2\,482$\\
		$40$ & $8\,820$ & $2\,522$\\
		$42$ & $20\,575\,458$ & $450\,886$\\
		$44$ & $8\,242\,146$ & $1\,606\,786$
	\end{tabular}
	\caption{Counts of hypohamiltonian snarks which can be obtained by taking 
	the dot product on two smaller hypohamiltonian snarks (d.p.\ hypoham.) and the number of 
	$K_2$-hamiltonian snarks among them (d.p.\ hypo- and $K_2$-ham.).}
	\label{table:counts_hypohamiltonian_snarks}
\end{table}


%

The technique from Lemma~\ref{lem:ext5cycle} does not preserve the chromatic 
index of a graph. We will therefore use the dot product to create larger graphs 
whilst preserving $3$-regularity, chromatic index, and $K_2$-hypohamiltonicity 
by imposing constraints on the smaller graphs. We recall the definition of 
the dot product as was given by the last author in \cite{Za21}.

Let $G$ and $H$ be disjoint graphs on at least six vertices. For independent 
edges $ab, cd$ in $G$ and adjacent cubic vertices $x$ and $y$ in $H$, consider 
$G' := G - ab - cd$ and $H' := H - x - y$, and let $a', b'$ be the neighbours 
of 
$x$ in $H - y$ and $c', d'$ be the neighbours of $y$ in $H - x$. Then the 
\textit{dot product} $G\cdot H$ is defined as the graph 
$$(V(G)\cup V(H'), E(G')\cup E(H')\cup \{aa',bb',cc',dd'\}).$$

A similar result concerning $K_2$-hypohamiltonian graphs and the dot product 
can already be found in \cite{Za21}. As mentioned by the author in \cite{Za21}, 
this result is 
quite impractical due to the many requirements imposed on $G$ and $H$. It also 
contained certain inaccuracies, which we rectify in Section~\ref{sect:last}. 
The 
following theorem gives different constraints on $G$ and $H$, which make 
it more practical to use.

In a graph $G$, consider pairwise distinct vertices $s_1, s_2, t_1, t_2$. We 
will refer to a 
pair of paths $(\mathfrak{p},\mathfrak{q})$ as \textit{disjoint spanning
$s_1t_1$- and 
$s_2t_2$-paths} if $\mathfrak{p}$ is an $s_1t_1$-path, $\mathfrak{q}$ is an 
$s_2t_2$-path, $\mathfrak{p}$ and $\mathfrak{q}$ are disjoint, and 
each vertex of $G$ lies in either $\mathfrak{p}$ or $\mathfrak{q}$.

\begin{theorem}\label{thm:dotProduct}
	Let $G$ and $H$ be disjoint non-hamiltonian graphs with $a,b,c,d\in V(G)$ and 
	$a'$, $b'$, $c'$, $d'$,
	$x$, $y\in V(H)$ as introduced in the definition of the dot product---in particular, $x$ and $y$ 
	are cubic--- $a,b\not\in N_G(c)\cup N_G(d)$, $a'\not \in N_H(b')$, and 
	$c'\not\in N_H(d')$. The graph $G\cdot H$ is non-hamiltonian and 
	$K_2$-hamiltonian if
	\begin{enumerate}[label=\normalfont(\roman*)]
		\item for any $vw \in E(G')$ there exists in $G - v 
		- w$ a hamiltonian $ab$-path not containing 
		$cd$ or a hamiltonian $cd$-path not containing $ab$ or disjoint 
		spanning $s_1t_1$- and $s_2t_2$-paths containing neither 
		$ab$, nor 
		$cd$, where $\{s_1, 
		s_2\} = \{a,b\}$ 
		and  $\{t_1, t_2\} = \{c,d\}$;
		\item for any $s\in \{a,b\}$ and $t\in\{c,d\}$, the graph $G$ admits a 
		hamiltonian $st$-path, containing neither $ab$, nor $cd$ and $G$ admits 
		disjoint spanning $ab$- and $cd$-paths, containing neither $ab$, nor 
		$cd$;
		\item $G - a$ and $G - b$ contain a hamiltonian cycle through $cd$, and 
		$G - c$ and $G - d$ contain a hamiltonian cycle through $ab$;
		\item $H - x$ and $H - y$ are hamiltonian and there exist in $H'$ 
		disjoint $a'b'$- 
		and 
		$c'd'$-paths spanning $V(H')$;
		\item for any $vw\in E(H)$ with $v,w\not\in \{x,y\}$ there exists in $H 
		- x - y - v - w$ a hamiltonian $s't'$-path with $s'\in \{a',b'\}$ and 
		$t'\in \{c', d'\}$ or disjoint spanning $s'_1t'_1$- and 
		$s'_2t'_2-$paths with 
		$\{s'_1,s'_2\} = \{a',b'\}$ and $\{t'_1,t'_2\} = \{c',d'\}$; and 
		\item $H - x - a'$, $H - x - b'$, $H - y - c'$, and $H - y - d'$ are 
		hamiltonian.
	\end{enumerate}
\end{theorem}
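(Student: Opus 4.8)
The backbone of the argument is the observation that the four ``crossing'' edges $aa',bb',cc',dd'$ form an edge cut of $G\cdot H$ separating $V(G)$ from $V(H')$, so every hamiltonian cycle $\mathfrak{h}$ of $G\cdot H$ uses an even number of them---hence two or four. Restricting $\mathfrak{h}$ to $V(G)$ yields a spanning union of vertex-disjoint paths of $G'=G-ab-cd$ whose ends lie in $\{a,b,c,d\}$, and $\mathfrak{h}\cap V(H')$ is likewise a spanning union of paths of $H'$ with ends in $\{a',b',c',d'\}$. Identifying $a\leftrightarrow a'$, $b\leftrightarrow b'$, etc., the $G$-side determines a perfect matching $M_G$ and the $H'$-side a perfect matching $M_{H'}$ on the four terminals, and gluing the two families along the crossing edges produces a \emph{single} cycle precisely when $M_G\neq M_{H'}$. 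Since $x,y$ are cubic with $N_H(x)=\{a',b',y\}$ and $N_H(y)=\{c',d',x\}$, I will freely use the ``dictionary'': $H-y$ (resp.\ $H-x$) is hamiltonian iff $H'$ has a hamiltonian $a'b'$-path (resp.\ $c'd'$-path); $H-x-z$ is hamiltonian iff $H'-z$ has a hamiltonian $c'd'$-path, for $z\in\{a',b'\}$; $H-y-z$ is hamiltonian iff $H'-z$ has a hamiltonian $a'b'$-path, for $z\in\{c',d'\}$; and, on the $G$-side, $G-a$ (resp.\ $G-b$) has a hamiltonian cycle through $cd$ iff $G'-a$ (resp.\ $G'-b$) has a hamiltonian $cd$-path, while $G-c$ (resp.\ $G-d$) has a hamiltonian cycle through $ab$ iff $G'-c$ (resp.\ $G'-d$) has a hamiltonian $ab$-path.

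For non-hamiltonicity, suppose $\mathfrak{h}$ is a hamiltonian cycle of $G\cdot H$. If it uses the pair $\{aa',bb'\}$ or $\{cc',dd'\}$, then $\mathfrak{h}\cap V(G)$ is a hamiltonian $ab$-path, resp.\ $cd$-path, of $G'$, and adding the edge $ab$, resp.\ $cd$, of $G$ produces a hamiltonian cycle of $G$---impossible. If it uses a mixed pair, say $\{aa',cc'\}$ (the other three mixed pairs are symmetric), then $\mathfrak{h}\cap V(H')$ is a hamiltonian $a'c'$-path of $H'$, and appending $c'\,y\,x\,a'$ gives a hamiltonian cycle of $H$---impossible. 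If $\mathfrak{h}$ uses all four crossing edges, then $M_G\in\{\{ab,cd\},\{ac,bd\},\{ad,bc\}\}$; the last two are excluded because, together with the edges $ab$ and $cd$ of $G$, each would close up into a hamiltonian cycle of $G$. So $M_G=\{ab,cd\}$, forcing $M_{H'}\in\{\{a'c',b'd'\},\{a'd',b'c'\}\}$; but then the two $H'$-paths, together with $x$ (joined to $a',b'$) and $y$ (joined to $c',d'$), form a hamiltonian cycle of $H$---impossible. Hence $G\cdot H$ is non-hamiltonian. The hypotheses $a,b\notin N_G(c)\cup N_G(d)$, $a'\notin N_H(b')$, $c'\notin N_H(d')$ ensure $ac,ad,bc,bd\notin E(G)$ and $a'b',c'd'\notin E(H)$, which is exactly what keeps conditions (i)--(vi) consistent with $G,H$ being non-hamiltonian and, below, removes the need to treat those vertex pairs.

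For $K_2$-hamiltonicity I must show $(G\cdot H)-u-v$ is hamiltonian for every edge $uv$, and the edges fall into three types. If $uv\in E(G')$, condition (i) provides, inside $G'-u-v$, a hamiltonian $ab$-path, or a hamiltonian $cd$-path, or disjoint spanning paths realizing one of the two mixed matchings on $\{a,b,c,d\}$; I close these up, respectively, via a hamiltonian $a'b'$-path of $H'$ (which exists as $H-y$ is hamiltonian, by (iv)) using $aa',bb'$; via a hamiltonian $c'd'$-path of $H'$ (as $H-x$ is hamiltonian) using $cc',dd'$; or via the disjoint spanning $a'b'$- and $c'd'$-paths of $H'$ from (iv) using all four crossing edges---here the point is that $\{a'b',c'd'\}$ is complementary to \emph{both} mixed matchings, so the glued object is a single cycle. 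If $uv\in E(H')$ (so $u,v\notin\{x,y\}$), condition (v) symmetrically gives, inside $H'-u-v$, a hamiltonian $s't'$-path with $s'\in\{a',b'\}$, $t'\in\{c',d'\}$, or disjoint spanning paths realizing a mixed matching on $\{a',b',c',d'\}$; I complete on the $G$-side using the hamiltonian $st$-path of $G$ avoiding $ab,cd$ from (ii), resp.\ the disjoint spanning $ab$- and $cd$-paths of $G$ avoiding $ab,cd$ from (ii), which are again complementary to any mixed matching. Finally, if $uv$ is a crossing edge, say $uv=aa'$, I take a hamiltonian $cd$-path of $G'-a$ from (iii) and a hamiltonian $c'd'$-path of $H'-a'$ from (vi) (via $H-x-a'$ hamiltonian), glued by $cc'$ and $dd'$; the cases $bb'$, $cc'$, $dd'$ use the matching clauses of (iii) and (vi) identically. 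In each case one checks directly that every vertex of $G\cdot H$ outside $\{u,v\}$ is covered once and that the prescribed paths join into one cycle.

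The main difficulty is not a single deep point but the bookkeeping: for every (sub)case one must confirm that the chosen pairing of $M_G$ and $M_{H'}$ glues into one cycle rather than two; that the deleted pair $u,v$ never interferes with the terminals in a way that invalidates the invoked path (in particular, when $u$ or $v$ lies in $\{a,b,c,d\}$ or in $\{a',b',c',d'\}$); and that the ``dictionary'' translations are applied with the right terminal pairs---for which it is used essentially that $a',b',c',d'$ are four distinct vertices, so that removing one of them leaves the degree-$2$ vertex $x$ or $y$ of the relevant reduced graph intact. Once the dictionary and this ``complementary matching'' principle are set up, each of the three edge types reduces to a short, routine verification.
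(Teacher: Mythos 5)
Your proof is correct and takes essentially the same approach as the paper: for $K_2$-hamiltonicity you split the deleted adjacent pair into edges of $G'$, edges of $H'$, and the four crossing edges, and close up exactly the paths supplied by (i)+(iv), (v)+(ii), and (iii)+(vi), respectively, just as the paper does. The only difference is that you prove the non-hamiltonicity of $G\cdot H$ from scratch via the parity/matching analysis of the cut $\{aa',bb',cc',dd'\}$, whereas the paper simply cites this known fact from the first article of the series; your argument for it is also correct.
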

\begin{proof}
	Since $G$ and $H$ are non-hamiltonian their dot product will also be 
	non-hamiltonian. This was shown for example in \cite{Za21}. 
	 We show $G\cdot 
	H$ is $K_2$-hamiltonian. Consider $vw\in E(G')$. Suppose that by 
	(i) there 
	exists a hamiltonian $ab$-path in $G - v - w$ not containing $cd$, call it 
	$\mathfrak{p}$. By (iv), $H - x - y$ contains a hamiltonian $a'b'$-path 
	$\mathfrak{q}$. Now $\mathfrak{p}\cup \mathfrak{q}$  is a 
	hamiltonian cycle 
	in $G\cdot H - v - w$. The proof if $G - v - w$ has a hamiltonian 
	$cd$-path not containing $ab$ is analogous. For the final case of 
	(i), assume that $G - v - w$ has disjoint spanning $s_1t_1$- and 
	$s_2t_2$-paths 
	$\mathfrak{p}$ and $\mathfrak{q}$ not containing $ab$ and $cd$, where 
	$\{s_1, 
	s_2\} = \{a,b\}$ 
	and  $\{t_1, t_2\} = \{c,d\}$. By (iv) 
	there 
	exist disjoint spanning 
	$ab$- and $cd$-paths $\mathfrak{p}'$ and $\mathfrak{q}'$ in $H - x - y$. 
	Then $\mathfrak{p}\cup \mathfrak{p}'\cup\mathfrak{q}\cup\mathfrak{q}'$ is a 
	hamiltonian cycle in $G\cdot H - v - w$. 
	
	By (iii) there is a hamiltonian $cd$-path $\mathfrak{p}$ in $G - a$, and 
	(vi) implies that there is a hamiltonian $c'd'$-path $\mathfrak{q}$ in $H - 
	x - y - a'$. Hence, $\mathfrak{p}\cup\mathfrak{q}$ is a 
	hamiltonian cycle in $G\cdot H - a - a'$. The cases when removing $b, b'$ 
	or $c, c'$ or $d, d'$ can be dealt with in a similar way. 
	
	Consider $vw\in E(H)$ with $v,w\not\in \{x,y\}$. Suppose we are in the 
	first case of (v) and there exists in $H 
	- x - y - v - w$ a hamiltonian $s't'$-path $\mathfrak{p}$ with $s'\in 
	\{a',b'\}$ and $t'\in \{c',d'\}$. Let $N_{G\cdot H}(s')\cap V(G) = \{s\}$ 
	and $N_{G\cdot H}(t')\cap V(G) = \{t\}$. By (ii) there exists a hamiltonian 
	$st$-path containing neither $ab$, nor $cd$, which together with 
	$\mathfrak{p}$ forms a hamiltonian cycle in $G\cdot H - v - w$. Finally, 
	suppose we are in the second case of (v) and that $H - x - y - v - w$ 
	contains disjoint spanning 
	$s'_1t'_1$- and $s'_2t'_2$-paths $\mathfrak{p}'$ and $\mathfrak{q}'$, where 
	$\{s'_1, 
	s'_2\} = \{a',b'\}$ 
	and  $\{t'_1, t'_2\} = \{c',d'\}$. By (ii) there exist disjoint spanning 
	$ab$- and $cd$-paths $\mathfrak{p}$ and $\mathfrak{q}$ containing neither 
	$ab$, nor $cd$ in $G$. Then $\mathfrak{p}\cup \mathfrak{p}'\cup 
	\mathfrak{q}\cup\mathfrak{q}'$ is a hamiltonian cycle in $G\cdot H - v - 
	w$.
\end{proof}

It was proven by Isaacs in \cite{Is75} that the dot product of two snarks is 
again a snark. Hence, the dot product of two $K_2$-hypohamiltonian snarks $G$ 
and $H$ satisfying the properties of the previous theorem is a 
$K_2$-hypohamiltonian snark. The following lemma will help us iteratively 
apply the dot product to $K_2$-hypohamiltonian snarks.

\begin{lemma}\label{lem: iterative_dot_product}
	Let $G$ and $H$ be defined as in Theorem \ref{thm:dotProduct} such that 
	$a,b,c,d\in V(G)$ satisfy (i)--(iii) and $x,y\in V(H)$ satisfy 
	(iv)--(vi). Let 
	$x_Gy_G\in E(G)$ such that $x_G$ and $y_G$ are cubic vertices and such that 
	$N_G[x_G]\cup N_G[y_G]$ does not intersect 
	$\{a,b,c,d\}$. Let $a'_G, b'_G$ be the neighbours of $x_G$ not equal to $y_G$ 
	and let $c'_G, d'_G$ be the neighbours of $y_G$ not equal to $x_G$.	If 
	\begin{itemize}
		\item $G - x_G$ contains a hamiltonian $ab$-path not containing $cd$ or 
		a hamiltonian $cd$-path not containing $ab$;
		\item $G - y_G$ contains a hamiltonian $ab$-path not containing $cd$ or 
		a hamiltonian $cd$-path not containing $ab$; and
		\item $G$ contains disjoint spanning $ab$- and $cd$-paths,
		one containing $x_G$ and the other $y_G$,
	\end{itemize}
	then $x_G,y_G\in V(G\cdot H)$ satisfy properties (iv)--(vi).
\end{lemma}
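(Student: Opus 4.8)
The plan is to verify that $x_G$ and $y_G$, playing the roles of $x$ and $y$ in $H$, satisfy properties (iv)--(vi) of Theorem~\ref{thm:dotProduct} when the base graph is $G \cdot H$ rather than $G$ itself; the hypotheses (i)--(iii) on $G$ together with the three bulleted conditions are chosen precisely to supply the structures needed. The key observation throughout is that a hamiltonian path or pair of spanning paths in $G$ (or in a vertex-deleted subgraph of $G$) avoiding the edges $ab$ and $cd$ extends \emph{through} the $H'$-side of $G \cdot H$ by replacing the ``bridging'' behaviour at $a,b,c,d$ with the spanning paths of $H' = H - x - y$ guaranteed by (iv) of the \emph{original} application of Theorem~\ref{thm:dotProduct} to the pair $(G,H)$. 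So every condition required of $x_G,y_G$ in $G \cdot H$ will be reduced to a condition in $G$ (or $G$ minus one or two of $x_G,y_G$ and possibly an edge), which is then converted into the corresponding statement in $G \cdot H$ by this standard splicing argument.

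First I would handle (iv) for $x_G,y_G$ in $G\cdot H$: I must show $(G\cdot H) - x_G$ and $(G\cdot H) - y_G$ are hamiltonian, and that $(G\cdot H) - x_G - y_G$ contains disjoint spanning $a'_Gb'_G$- and $c'_Gd'_G$-paths. For hamiltonicity of $(G\cdot H)-x_G$: the first bullet gives a hamiltonian $ab$-path (or $cd$-path) in $G-x_G$ not containing $cd$ (resp.\ $ab$); since $N_G[x_G]\cup N_G[y_G]$ misses $\{a,b,c,d\}$, deleting $x_G$ from $G\cdot H$ is the same as working in $(G-x_G)\cdot H$ at the level of the four bridge edges $aa',bb',cc',dd'$, so splicing in an $a'b'$-path of $H'$ from (iv)-original yields a hamiltonian cycle of $(G\cdot H)-x_G$. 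The same works for $y_G$ via the second bullet. For the spanning-paths part of (iv): the third bullet gives disjoint spanning $ab$- and $cd$-paths of $G$, one through $x_G$, the other through $y_G$; removing $x_G$ from the first and $y_G$ from the second splits each into two subpaths whose four endpoints among $\{a,b,c,d\}$ are joined through $G\cdot H$ to $a',b',c',d'$, and whose new interior endpoints are exactly $a'_G,b'_G,c'_G,d'_G$ — then stitching in the $a'b'$- and $c'd'$-spanning paths of $H'$ from (iv)-original produces disjoint spanning $a'_Gb'_G$- and $c'_Gd'_G$-paths of $(G\cdot H)-x_G-y_G$, after checking the endpoints match up correctly (this is the one place one must be careful about which of $a'_G,b'_G$ lies on which side).

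Next I would verify (v) for $x_G,y_G$ in $G\cdot H$: for every edge $vw$ of $G\cdot H$ with $v,w\notin\{x_G,y_G\}$ we need, in $(G\cdot H)-x_G-y_G-v-w$, either a hamiltonian $s'_Gt'_G$-path or disjoint spanning $s'_{G,1}t'_{G,1}$- and $s'_{G,2}t'_{G,2}$-paths (with the endpoints being the appropriate pairings of $a'_G,b'_G,c'_G,d'_G$). Here $vw$ is either an edge of $G'$, an edge of $H'$, or one of the four bridge edges. If $vw\in E(G')$ (or a bridge edge, treated similarly since its $G$-endpoint behaves like a vertex of $G$), apply (i) of Theorem~\ref{thm:dotProduct} to $G$ in the graph $G-x_G-y_G-v-w$ — note (i) is a statement about $G-v-w$, and we may further delete $x_G,y_G$ because... \emph{this is the delicate point}, see below — obtaining paths avoiding $ab,cd$, then splice through $H'$ as before; the endpoints in $G-x_G-y_G$ terminate at $a'_G,b'_G,c'_G,d'_G$ exactly as in the (iv) argument. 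If $vw\in E(H')$, apply (v)-original to $H$ with the extra deleted edge $vw$, giving paths on the $H$-side with endpoints among $a',b',c',d'$, and then use the structures on $G$: the paths of $G$ from (iv)-original's $G$-side analogue must be \emph{split at $x_G$ and $y_G$}, which is exactly what the third bullet and the first two bullets provide in their various cases. Finally (vi) for $x_G,y_G$: we need $(G\cdot H)-x_G-a'_G$, $(G\cdot H)-x_G-b'_G$, $(G\cdot H)-y_G-c'_G$, $(G\cdot H)-y_G-d'_G$ hamiltonian; $a'_G$ is a vertex of $G$, so $(G\cdot H)-x_G-a'_G = (G-x_G-a'_G)\cdot H$ at the bridge level, and I would get a spanning structure of $G-x_G-a'_G$ avoiding $ab,cd$ from the bulleted hypotheses (the first bullet gives a hamiltonian $ab$- or $cd$-path in $G-x_G$; whether it survives deleting $a'_G$ needs a short argument, or one uses that $a'_G$ is an interior vertex of a suitable one of these paths), then splice through $H'$.

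The hard part will be the bookkeeping in step two, specifically justifying that the structures guaranteed by hypotheses (i)--(iii) of Theorem~\ref{thm:dotProduct} applied to $G$ can be taken to \emph{also} avoid the vertices $x_G,y_G$ (or pass through them in the prescribed way) — hypotheses (i)--(iii) say nothing a priori about $x_G,y_G$, so the three bulleted conditions in the lemma are exactly the supplementary data that rescue this. I expect the argument to split into several cases according to which of the three outcomes in (i) (respectively which hamiltonian-path case in (ii), (iii)) actually occurs, and in each case one must check that the path, after deleting $x_G$ or $y_G$ (or being restricted so it passes through them), still has its endpoints and edge-avoidance properties intact, so that splicing with the $H'$-side produces a genuine hamiltonian cycle or spanning pair of paths in $G\cdot H$ with the required endpoints $a'_G,b'_G,c'_G,d'_G$. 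Everything else is the by-now-routine ``cut-and-paste along the four bridge edges'' mechanism already used in the proof of Theorem~\ref{thm:dotProduct}, which I would invoke rather than repeat in full detail.
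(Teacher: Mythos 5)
Your treatment of (iv) is fine and matches the paper's: splice the hamiltonian $ab$- or $cd$-path of $G-x_G$ (resp.\ $G-y_G$) with the $a'b'$- (resp.\ $c'd'$-) path of $H-x-y$, and for the spanning-path condition use the third bullet together with (iv) for $H$ to get two disjoint cycles covering $G\cdot H$, one through $x_G$ and one through $y_G$, whose vertex-deleted versions are the required $a'_Gb'_G$- and $c'_Gd'_G$-paths.

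For (v) and (vi), however, your plan has a genuine gap, and it is exactly the point you flag yourself as ``the delicate point'' without resolving it. You propose to build the required structures in $(G\cdot H)-x_G-y_G-v-w$ directly, by applying hypothesis (i) of Theorem~\ref{thm:dotProduct} to $G-v-w$ and then arguing that the resulting paths can be made to avoid (or pass suitably through) $x_G$ and $y_G$. Nothing in (i)--(iii) or in the three bulleted conditions gives any control over how those paths interact with $x_G,y_G$ for an \emph{arbitrary} edge $vw$; the bullets only concern $G-x_G$, $G-y_G$, and one specific pair of spanning paths of $G$, so this case analysis cannot be closed with the stated hypotheses. The paper's proof sidesteps the difficulty entirely by running the implication in the other direction: since $x_G$ and $y_G$ are adjacent cubic vertices of $G\cdot H$ whose remaining neighbours are exactly $a'_G,b'_G$ and $c'_G,d'_G$, \emph{any} hamiltonian cycle of $G\cdot H-v-w$ either uses the edge $x_Gy_G$ (and then deleting $x_G,y_G$ leaves a hamiltonian $s't'$-path with $s'\in\{a'_G,b'_G\}$, $t'\in\{c'_G,d'_G\}$) or passes through $a'_Gx_Gb'_G$ and $c'_Gy_Gd'_G$ (and then deleting $x_G,y_G$ leaves the required pair of disjoint spanning paths). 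Hence (v) reduces to the hamiltonicity of $G\cdot H-v-w$ for all such edges, which is precisely the $K_2$-hamiltonicity of $G\cdot H$ guaranteed by Theorem~\ref{thm:dotProduct}. Likewise your argument for (vi) does not work as sketched: a hamiltonian path of $G-x_G$ does not survive the deletion of an interior vertex $a'_G$, so ``$a'_G$ is an interior vertex of a suitable path'' gives nothing; the correct observation is simply that $x_Ga'_G$, $x_Gb'_G$, $y_Gc'_G$, $y_Gd'_G$ are edges of $G\cdot H$, so (vi) is immediate from the $K_2$-hamiltonicity of $G\cdot H$.
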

\begin{proof}
	We first show (iv). Suppose $G - x_G$ contains a hamiltonian $ab$-path 
	$\mathfrak{p}$ not containing $cd$. Since $H$ satisfies (iv), there is a 
	hamiltonian $a'b'$-path $\mathfrak{q}$ in $H - x - y$. Then 
	$\mathfrak{p}\cup \mathfrak{q}$ is a hamiltonian cycle in $G\cdot H - x_G$. 
	If $G - x_G$ contains a hamiltonian $cd$-path not containing $ab$, a 
	similar argument shows $G\cdot H -x_G$ is hamiltonian. The same reasoning  
	proves that $G\cdot H - y_G$ is hamiltonian.
	
	Assume without loss of generality that $G$ contains disjoint spanning $ab$- 
	and 
	$cd$- 
	paths $\mathfrak{p}$ and $\mathfrak{q}$ such that $x_G\in 
	V(\mathfrak{p})$ and $y_G\in V(\mathfrak{q})$. Since $H$ satisfies (iv) we 
	get 
	disjoint 
	spanning $a'b'$- and $c'd'$-paths $\mathfrak{p}'$ and $\mathfrak{q}'$. Then 
	$G\cdot H$ gets spanned by two disjoint cycles $\mathfrak{p}\cup 
	\mathfrak{p}'$ and $\mathfrak{q}\cup \mathfrak{q}'$, the former containing 
	$x_G$ and the latter containing $y_G$. Hence, we get disjoint spanning 
	$a'_Gb'_G$- and $c'_Gd'_G$-paths in $G\cdot H - x_G - y_G$. This shows 
	$x_G,y_G\in V(G\cdot H)$ satisfy (iv).
	
	We show (v). Let $vw\in E(G\cdot H)$ with $v,w\not \in \{x,y\}$. Suppose 
	$G\cdot H - v - w$ contains a 
	hamiltonian cycle. If this cycle contains $x_Gy_G$, then $G\cdot H - x_G - 
	y_G - v - w$ admits a hamiltonian $s't'$-path with $s'\in \{a'_G,b'_G\}$ 
	and $t'\in \{c'_G, d'_G\}$. If the cycle does not contain $x_Gy_G$, it 
	contains $a'_Gx_Gb'_G$ and $c'_Gy_Gd'_G$, which in $G\cdot H - x_G - y_G - 
	v - w$ leads to disjoint spanning $s'_1t'_1$- and $s'_2t'_2$-paths with 
	$\{s'_1,s'_2\} = \{a'_G,b'_G\}$ and $\{t'_1,t'_2\} = \{c'_G,d'_G\}$. We 
	need to show that $G\cdot H - v - w$ contains a hamiltonian cycle.
	
	Suppose 
	that $vw\in E(G)$ and assume that there exists in $G - v - w$ a hamiltonian 
	$ab$-path $\mathfrak{p}$ not containing $cd$. Since $H$ satisfies (iv) 
	there is a hamiltonian $a'b'$-path $\mathfrak{q}$ in $H -x - y$. Then 
	$\mathfrak{p}\cup\mathfrak{q}$ is a hamiltonian cycle in $G\cdot H - v - 
	w$. Similarly, if there exists in $G - v - w$ a hamiltonian $cd$-path not 
	containing $ab$, there is a hamiltonian cycle in $G\cdot H - v - w$. 
	Otherwise, suppose $G - v - w$ contains disjoint spanning $s_1t_1$- and 
	$s_2t_2$-paths $\mathfrak{p}$ and $\mathfrak{q}$ containing neither $ab$, 
	nor $cd$, where $\{s_1,s_2\} = \{a,b\}$ and $\{t_1,t_2\} = \{c,d\}$. Since 
	$H$ satisfies (iv) there are disjoint spanning $a'b'$- and $c'd'$-paths 
	$\mathfrak{p}'$ and $\mathfrak{q}'$. Then, 
	$\mathfrak{p}\cup\mathfrak{p}'\cup\mathfrak{q}\cup\mathfrak{q}'$ is a 
	hamiltonian cycle in $G\cdot H - v - w$. 
	
	Suppose that $vw\in E(H)$ and assume that there exists in $H - x - y - v - 
	w$ a hamiltonian $s't'$-path $\mathfrak{q}$ with $s'\in \{a',b'\}$ and 
	$t'\in\{c',d'\}$. Since $G$ satisfies (ii), there is a hamiltonian 
	$st$-path $\mathfrak{p}$ in $G$ with $s \in N_{G\cdot H}(s')\cap V(G)$ and 
	$t\in N_{G\cdot H}(t')\cap V(G)$. We get a hamiltonian cycle 
	$\mathfrak{p}\cup \mathfrak{q}$ in $G\cdot H - v - w$. If $H - x - y - v - 
	w$ contains disjoint spanning $s'_1t'_1$- and $s'_2t'_2$-paths 
	$\mathfrak{p}'$ and $\mathfrak{q}'$, with $\{s'_1,s'_2\} = \{a',b'\}$ and 
	$\{t'_1,t'_2\} = \{c',d'\}$. Since $G$ satisfies (ii) it admits disjoint 
	spanning $ab$- and $cd$-paths $\mathfrak{p}$ and $\mathfrak{q}$, containing 
	neither $ab$, nor $cd$. Hence, $\mathfrak{p}\cup \mathfrak{p}'\cup 
	\mathfrak{q}\cup\mathfrak{q}'$ is a hamiltonian cycle in $G\cdot H  - v - 
	w$.
	
	Finally, assume without loss of generality that $vw = aa'$. Since $G$ 
	satisfies (ii), there is a hamiltonian $cd$-path $\mathfrak{p}$ in $G - a$. 
	Since $H$ satisfies (vi) there is a hamiltonian $c'd'$-path $\mathfrak{p}'$ 
	in $H - x - y - a'$. Then $\mathfrak{p}\cup \mathfrak{p}'$ is a hamiltonian 
	cycle in $G\cdot H - a - a'$. The same argument works for $bb'$, $cc'$, and 
	$dd'$. Hence, $x_G,y_G\in V(G\cdot H)$ satisfy (v).
	
	Since $G\cdot H$ is $K_2$-hamiltonian by Theorem~\ref{thm:dotProduct} 
	condition (vi) 
	is also satisfied.
\end{proof}

We can now classify for which orders there exist $K_2$-hypohamiltonian snarks.
\begin{theorem}
	There exists a $K_2$-hypohamiltonian snark of order $n$ if and only if 
	$n\in \{10,20,22\}$ or $n\geq 26$ is even.
\end{theorem}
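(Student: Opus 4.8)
For the necessity direction, note first that a $K_2$-hypohamiltonian snark is cubic, so $n$ is even, and that since no snark has fewer than $10$ vertices, $n \geq 10$. The counts in Table~\ref{table:counts_snarks} are exhaustive for snarks on at most $36$ vertices, and they show that the $K_2$-hypohamiltonian ones among these have orders exactly $10, 20, 22, 26, 28, 30, 32, 34$, and $36$---in particular there is none of order $12$, $14$, $16$, $18$, or $24$. Hence the order of a $K_2$-hypohamiltonian snark lies in $\{10, 20, 22\}$ or is an even integer at least $26$, which settles one implication.

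For sufficiency, the orders $10, 20, 22, 26, 28, 30, 32, 34, 36$ are realised by the graphs recorded in that same table, so it remains to produce a $K_2$-hypohamiltonian snark of every even order $n \geq 38$. I would obtain these by iterating the dot product, using Theorem~\ref{thm:dotProduct}, Lemma~\ref{lem: iterative_dot_product}, and the theorem of Isaacs~\cite{Is75} that a dot product of two snarks is again a snark. Take the Petersen graph $P$ as left factor. Writing $V(P)$ as the set of $2$-element subsets of $\{1, \ldots, 5\}$ and setting $a = \{1,2\}$, $b = \{3,4\}$, $c = \{1,3\}$, $d = \{2,4\}$, $x_P = \{1,4\}$, $y_P = \{2,3\}$, one has that $ab$ and $cd$ are independent edges with $a, b \notin N_P(c) \cup N_P(d)$ and that $x_P y_P$ is an edge with $N_P[x_P] \cup N_P[y_P] = V(P) \setminus \{a,b,c,d\}$, so $N_P[x_P] \cup N_P[y_P]$ avoids $\{a,b,c,d\}$ as Lemma~\ref{lem: iterative_dot_product} requires. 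A finite computation---with the certifying hamiltonian cycles and paths recorded so it can be checked by hand, cf.~\cite{Re22}---then confirms that $P$, so labelled, satisfies conditions (i)--(iii) of Theorem~\ref{thm:dotProduct} together with the three path hypotheses of Lemma~\ref{lem: iterative_dot_product}.

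As right factors, pick for each $q \in \{26, 28, 30, 32\}$ a $K_2$-hypohamiltonian snark $H_q$ of order $q$ (available by Table~\ref{table:counts_snarks}) together with a pair of adjacent cubic vertices witnessing conditions (iv)--(vi) of Theorem~\ref{thm:dotProduct}; this is again a finite verification. Then $P \cdot H_q$ is non-hamiltonian and $K_2$-hamiltonian by Theorem~\ref{thm:dotProduct}, a snark by Isaacs' theorem, and of order $q + 8$; moreover, by Lemma~\ref{lem: iterative_dot_product} the vertices $x_P, y_P$ satisfy (iv)--(vi) inside $P \cdot H_q$, so we may form $P \cdot (P \cdot H_q)$ and repeat. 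Induction on $k$ yields a $K_2$-hypohamiltonian snark of order $q + 8k$ for every $k \geq 0$. As $q$ ranges over $\{26, 28, 30, 32\}$---one representative of each even residue class modulo $8$---and $k$ over the non-negative integers, the numbers $q + 8k$ exhaust all even integers at least $26$, in particular all even $n \geq 38$; together with the small orders above this proves the remaining implication.

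The only genuinely non-routine step is the verification: exhibiting one labelled copy of $P$ meeting (i)--(iii) and the edge hypotheses of Lemma~\ref{lem: iterative_dot_product}, and, for each of the four seed orders, a snark meeting (iv)--(vi). Should no labelling of the Petersen graph satisfy (i)--(iii), one would instead take a slightly larger non-hamiltonian snark $G_0$ as left factor and enlarge the finite family of seeds so as to cover every residue class modulo $|V(G_0)| - 2$---for instance using the $K_2$-hypohamiltonian snarks of orders $38, 40, 42, 44$ from Table~\ref{table:counts_hypohamiltonian_snarks}---while the inductive skeleton stays the same. Once the verifications are in place, the rest is an immediate induction through Theorem~\ref{thm:dotProduct} and Lemma~\ref{lem: iterative_dot_product}.
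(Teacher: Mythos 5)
Your necessity argument and your handling of orders up to $36$ coincide with the paper's, and your plan for large even orders---iterate the dot product via Theorem~\ref{thm:dotProduct}, Lemma~\ref{lem: iterative_dot_product} and Isaacs' theorem, with one fixed small left factor and finitely many seeds---is also the paper's strategy. The gap is that the verification you defer to a computer would come out negative: the Petersen graph cannot serve as the left factor. With your labelling $a=\{1,2\}$, $b=\{3,4\}$, $c=\{1,3\}$, $d=\{2,4\}$, the edge $vw$ with $v=\{1,4\}$, $w=\{2,3\}$ lies in $E(G')$, and in $P-v-w$ the four vertices containing the element $5$ all have degree two. In any spanning path system of $P-v-w$ whose end-vertices lie in $\{a,b,c,d\}$ these four vertices are interior, so all eight of their incident edges are forced into the system; but those eight edges already give each of $a,b,c,d$ two incident edges (indeed they form the unique hamiltonian cycle of $P-v-w$, consistent with the fact recalled in the introduction that the Petersen graph minus two adjacent vertices has exactly one hamiltonian cycle). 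Hence $P-v-w$ admits no hamiltonian $ab$- or $cd$-path and no spanning pair of disjoint paths with ends in $\{a,b,c,d\}$, so condition~(i) fails. Worse, the same forcing argument applied to the trace on the Petersen side of a would-be hamiltonian cycle of $P\cdot H-v-w$ shows that none of $a,b,c,d$ could use a connector edge $aa',bb',cc',dd'$, so the cycle could never reach $H'$: thus $P\cdot H-v-w$ is non-hamiltonian for \emph{every} $H$, and the failure is not an artefact of the sufficient conditions but of the construction itself.

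Relabelling does not rescue this: $P$ is edge-transitive, and for a fixed edge $ab$ the vertices outside $N[a]\cup N[b]$ induce a single perfect matching on four vertices whose two edges are exchanged by the automorphism induced by the transposition $(1\,2)$, so every admissible choice of $ab$ and $cd$ is equivalent to yours and the offending edge is always the other matching edge. Your fallback paragraph is therefore where the actual proof must live, and it is essentially the paper's: the paper takes the flower snark $J_5$ (order $20$) as the left factor, verifies by computer that it satisfies (i)--(iii) and the hypotheses of Lemma~\ref{lem: iterative_dot_product}, and uses seeds of orders $20,22,26,28,\ldots,36$ and $44$ from Tables~\ref{table:counts_snarks} and~\ref{table:counts_hypohamiltonian_snarks} satisfying (iv)--(vi), each iteration adding $18$ vertices. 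Exhibiting a left factor for which the verification succeeds is precisely the non-routine content, and your proposal leaves it open.
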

\begin{proof}
	It follows from the counts in Table~\ref{table:counts_snarks}, that there 
	exists no $K_2$-hypohamiltonian snark of order $12$, $14$, $16$, $18$, and 
	$24$ and that the Petersen graph is a $K_2$-hypohamiltonian snark of order 
	$10$. Using our computer program, we verified that there 
	exist 
	$K_2$-hypohamiltonian snarks of order $n\in 
	\{20,22,26,28,30,32,34,36,44\}$ 
	satisfying (iv)--(vi) for some pair of adjacent 
	vertices. Recall that we found $K_2$-hypohamiltonian snarks of order $44$,
	see Table~\ref{table:counts_hypohamiltonian_snarks}. In particular, the 
	flower 
	snark $J_5$	satisfies these conditions, as 
	well as (i)--(iii), and the conditions of 
	Lemma~\ref{lem: iterative_dot_product} for some pair of independent edges 
	and pair of adjacent vertices. In the GitHub repository \cite{Re22}, we
	attached a file containing certificates which prove the claims above for 
	a 
	$K_2$-hypohamiltonian snark of each of these orders so 
	that one can verify them by hand.
	
	Let $G$ be the flower snark $J_5$ and $H$ be a graph satisfying (iv)--(vi) 
	for 
	some pair of adjacent vertices. By Lemma~\ref{lem: iterative_dot_product}, 
	$G\cdot H$ satisfies conditions (iv)--(vi) and has order $\lvert V(G)\rvert 
	+ 
	\lvert V(H)\rvert-2 = \lvert V(H)\rvert+18$. We can then take the dot 
	product 
	with $G$ in this way ad infinitum. Taking the dot product of the flower 
	snark 
	$J_5$ with the graphs mentioned above, we can find a 
	$K_2$-hypohamiltonian 
	snark for all even orders $n\geq 26$.
\end{proof}

Note that the above also gives us an alternative proof for 
Theorem~\ref{thm:cubK2hypoham}.
%
%

\subsection{The planar case}
\label{subsect:planar}


In~\cite{HS93} Holton and Sheehan asked for the order of the smallest planar hypohamiltonian graph. This problem remains open, but we know that the answer is at least~23 and at most~40, see~\cite{GZ17} and~\cite{JMOPZ17}, respectively. The latter paper also proves that there exist planar hypohamiltonian graphs of order $n$ for every $n \ge 42$
and that the smallest planar hypohamiltonian graph of girth~5 has order~45. It is natural to raise the same questions for $K_2$-hypohamiltonian graphs.

Using the program \verb|plantri|~\cite{BM07} we generated all  cyclically 
4-edge-connected planar graphs of girth 5 up to 48 vertices, as well as an 
incomplete 
sample of such graphs for larger orders, and verified which of them are 
$K_2$-hypohamiltonian using the program mentioned at the beginning of 
Section~\ref{sect:orders}. The results are summarised in 
Table~\ref{table:counts_planar}. Note that a $K_2$-hypohamiltonian graph is 
always cyclically $4$-edge-connected \cite{Za21}.
As another correctness test, we stored all non-hamiltonian graphs obtained from 
our computation with \verb|plantri|, confirmed their 
non-hamiltonicity, and tested which of them are $K_2$-hamiltonian using an 
independent program.
The $K_2$-hypohamiltonian graphs from Table~\ref{table:counts_planar} can be 
obtained from the database of interesting graphs from the \textit{House of 
Graphs}~\cite{CDG23} by searching for the keywords ``planar 
K2-hypohamiltonian''.
The unique planar $K_2$-hypohamiltonian graph of girth 5 on 48 vertices is the graph $G_{48}$ from Figure~\ref{fig:48v}. We also determined that only two of the $K_2$-hypohamiltonian graphs from Table~\ref{table:counts_planar} are hypohamiltonian: next to $G_{48}$, the
graph from Figure~\ref{fig:52v} is also hypohamiltonian.


\begin{table}[ht!]
\centering
\begin{tabular}{ l | ccccccc}
Order 		& $\leq 47$ & 48 & 49 & 50 & 51 & 52 & 53 \\
\hline
Number of graphs & 0 & 1 & $\geq 9$ & $\geq 11$ & $\geq 0$ & $\geq 6$ &
$\geq 9$\\
\end{tabular}
\caption{Counts of $K_2$-hypohamiltonian graphs among planar graphs of
girth 5.}
\label{table:counts_planar}
\end{table}

\begin{figure}[!htb]
\begin{center}
\begin{tikzpicture}[scale=0.05]
    \definecolor{marked}{rgb}{0.25,0.5,0.25}
    \node [circle,fill,scale=0.25] (52) at (38.356969,49.427872) {};
    \node [circle,fill,scale=0.25] (51) at (47.158925,57.114914) {};
    \node [circle,fill,scale=0.25] (50) at (41.770171,39.227383) {};
    \node [circle,fill,scale=0.25] (49) at (32.762836,49.760391) {};
    \node [circle,fill,scale=0.25] (48) at (41.271394,59.432763) {};
    \node [circle,fill,scale=0.25] (47) at (52.498778,61.916870) {};
    \node [circle,fill,scale=0.25] (46) at (50.493888,48.264058) {};
    \node [circle,fill,scale=0.25] (45) at (48.371639,40.430317) {};
    \node [circle,fill,scale=0.25] (44) at (39.344744,29.760391) {};
    \node [circle,fill,scale=0.25] (43) at (29.193154,40.743276) {};
    \node [circle,fill,scale=0.25] (42) at (21.721271,44.586797) {};
    \node [circle,fill,scale=0.25] (41) at (21.711492,55.295843) {};
    \node [circle,fill,scale=0.25] (40) at (29.124695,58.982885) {};
    \node [circle,fill,scale=0.25] (39) at (39.061125,69.907091) {};
    \node [circle,fill,scale=0.25] (38) at (52.938876,70.083129) {};
    \node [circle,fill,scale=0.25] (37) at (59.872862,54.513447) {};
    \node [circle,fill,scale=0.25] (36) at (58.679707,47.784841) {};
    \node [circle,fill,scale=0.25] (35) at (55.266504,35.921760) {};
    \node [circle,fill,scale=0.25] (34) at (55.051345,29.046455) {};
    \node [circle,fill,scale=0.25] (33) at (45.486553,23.911980) {};
    \node [circle,fill,scale=0.25] (32) at (31.911981,30.308068) {};
    \node [circle,fill,scale=0.25] (31) at (17.144255,38.024450) {};
    \node [circle,fill,scale=0.25] (30) at (17.232275,62.053789) {};
    \node [circle,fill,scale=0.25] (29) at (31.853302,69.486552) {};
    \node [circle,fill,scale=0.25] (28) at (45.330074,75.911979) {};
    \node [circle,fill,scale=0.25] (27) at (61.985331,66.464548) {};
    \node [circle,fill,scale=0.25] (26) at (66.797067,54.190709) {};
    \node [circle,fill,scale=0.25] (25) at (64.039121,41.232274) {};
    \node [circle,fill,scale=0.25] (24) at (65.535453,29.408313) {};
    \node [circle,fill,scale=0.25] (23) at (45.877751,16.244499) {};
    \node [circle,fill,scale=0.25] (22) at (25.506113,25.633252) {};
    \node [circle,fill,scale=0.25] (21) at (8.753056,50.073349) {};
    \node [circle,fill,scale=0.25] (20) at (25.682151,74.415648) {};
    \node [circle,fill,scale=0.25] (19) at (46.161370,83.559901) {};
    \node [circle,fill,scale=0.25] (18) at (66.278729,70.063570) {};
    \node [circle,fill,scale=0.25] (17) at (75.383863,58.112469) {};
    \node [circle,fill,scale=0.25] (16) at (74.444988,42.249388) {};
    \node [circle,fill,scale=0.25] (15) at (61.271395,18.366748) {};
    \node [circle,fill,scale=0.25] (14) at (30.239609,14.210270) {};
    \node [circle,fill,scale=0.25] (13) at (0.000000,50.004890) {};
    \node [circle,fill,scale=0.25] (12) at (30.435208,85.819070) {};
    \node [circle,fill,scale=0.25] (11) at (61.056235,81.515893) {};
    \node [circle,fill,scale=0.25] (10) at (80.332518,64.674816) {};
    \node [circle,fill,scale=0.25] (9) at (80.000001,35.168704) {};
    \node [circle,fill,scale=0.25] (8) at (71.119805,15.843520) {};
    \node [circle,fill,scale=0.25] (7) at (24.997555,6.699266) {};
    \node [circle,fill,scale=0.25] (6) at (24.997555,93.300733) {};
    \node [circle,fill,scale=0.25] (5) at (71.158924,84.185820) {};
    \node [circle,fill,scale=0.25] (4) at (90.004891,49.936430) {};
    \node [circle,fill,scale=0.25] (3) at (75.002446,6.699266) {};
    \node [circle,fill,scale=0.25] (2) at (75.002446,93.300733) {};
    \node [circle,fill,scale=0.25] (1) at (99.999999,50.004890) {};
    \draw [black] (52) to (48);
    \draw [black] (52) to (50);
    \draw [black] (52) to (49);
    \draw [black] (51) to (47);
    \draw [black] (51) to (46);
    \draw [black] (51) to (48);
    \draw [black] (50) to (44);
    \draw [black] (50) to (45);
    \draw [black] (49) to (43);
    \draw [black] (49) to (40);
    \draw [black] (48) to (39);
    \draw [black] (47) to (38);
    \draw [black] (47) to (37);
    \draw [black] (46) to (36);
    \draw [black] (46) to (45);
    \draw [black] (45) to (35);
    \draw [black] (44) to (32);
    \draw [black] (44) to (33);
    \draw [black] (43) to (42);
    \draw [black] (43) to (32);
    \draw [black] (42) to (31);
    \draw [black] (42) to (41);
    \draw [black] (41) to (30);
    \draw [black] (41) to (40);
    \draw [black] (40) to (29);
    \draw [black] (39) to (28);
    \draw [black] (39) to (29);
    \draw [black] (38) to (27);
    \draw [black] (38) to (28);
    \draw [black] (37) to (36);
    \draw [black] (37) to (26);
    \draw [black] (36) to (25);
    \draw [black] (35) to (34);
    \draw [black] (35) to (25);
    \draw [black] (34) to (24);
    \draw [black] (34) to (33);
    \draw [black] (33) to (23);
    \draw [black] (32) to (22);
    \draw [black] (31) to (21);
    \draw [black] (31) to (22);
    \draw [black] (30) to (20);
    \draw [black] (30) to (21);
    \draw [black] (29) to (20);
    \draw [black] (28) to (19);
    \draw [black] (27) to (18);
    \draw [black] (27) to (26);
    \draw [black] (26) to (16);
    \draw [black] (25) to (16);
    \draw [black] (24) to (15);
    \draw [black] (24) to (16);
    \draw [black] (23) to (14);
    \draw [black] (23) to (15);
    \draw [black] (22) to (14);
    \draw [black] (21) to (13);
    \draw [black] (20) to (12);
    \draw [black] (19) to (11);
    \draw [black] (19) to (12);
    \draw [black] (18) to (17);
    \draw [black] (18) to (11);
    \draw [black] (17) to (10);
    \draw [black] (17) to (16);
    \draw [black] (16) to (9);
    \draw [black] (15) to (8);
    \draw [black] (14) to (7);
    \draw [black] (13) to (6);
    \draw [black] (13) to (7);
    \draw [black] (12) to (6);
    \draw [black] (11) to (5);
    \draw [black] (10) to (4);
    \draw [black] (10) to (5);
    \draw [black] (9) to (8);
    \draw [black] (9) to (4);
    \draw [black] (8) to (3);
    \draw [black] (7) to (3);
    \draw [black] (6) to (2);
    \draw [black] (5) to (2);
    \draw [black] (4) to (1);
    \draw [black] (3) to (1);
    \draw [black] (2) to (1);
\end{tikzpicture}
\caption{A planar hypohamiltonian $K_2$-hamiltonian graph of girth 5 on 52 vertices.}
\label{fig:52v}
\end{center}
\end{figure}
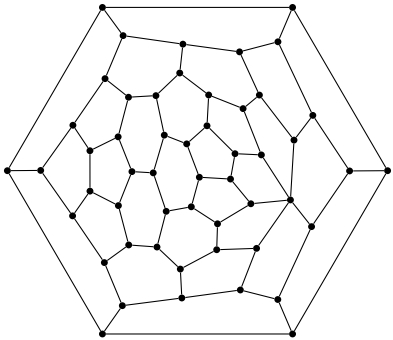



Using \verb|plantri| we also generated all $3$-connected planar graphs up to 
and including $17$ vertices. We verified using our program that none of these 
are $K_2$-hypohamiltonian. As $K_2$-hamiltonian graphs are $3$-connected, this 
gives us a lower bound for the order of the smallest planar 
$K_2$-hypohamiltonian 
graph.

\begin{observation}
The following statements hold for planar $K_2$-hypohamiltonian graphs.
\begin{enumerate}
\item[(i)] There exist no planar $K_2$-hypohamiltonian graphs on at most $17$ 
vertices. 
\item[(ii)] The smallest planar $K_2$-hypohamiltonian graph of girth~$5$ has order $48$. There is exactly one such graph of that order: the graph $G_{48}$ from Figure~\ref{fig:48v}.
\item[(iii)] There exist planar $K_2$-hypohamiltonian graphs of order $48, 49, 50, 52, 53$. 
\item[(iv)] There exist planar hypohamiltonian $K_2$-hamiltonian graphs of order $48$ and $52$.
\end{enumerate}
\end{observation}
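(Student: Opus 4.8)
The plan is to derive all four parts from the exhaustive computer searches described earlier in this subsection, combined with two structural facts established before: every $K_2$-hamiltonian graph is $3$-connected, and every $K_2$-hypohamiltonian graph is cyclically $4$-edge-connected (the latter recorded in~\cite{Za21}). These facts are what make the (necessarily finite) searches feasible, since they let us restrict attention to classes that \texttt{plantri} can generate completely.

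For part~(i), I would observe that any planar $K_2$-hypohamiltonian graph on at most $17$ vertices is in particular $K_2$-hamiltonian, hence $3$-connected, hence appears among the $3$-connected planar graphs on at most $17$ vertices generated by \texttt{plantri}. Running the $K_2$-hypohamiltonicity test from the beginning of Section~\ref{sect:orders} on each of these graphs returns no example, which proves~(i). Part~(iii) is then immediate from Table~\ref{table:counts_planar}: the listed counts for orders $48,49,50,52,53$ are each at least $1$, and every entry corresponds to an explicitly produced planar $K_2$-hypohamiltonian graph of girth~$5$ (in particular the generation is exhaustive for order~$48$, yielding the value $1$).

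For part~(ii), the key reduction is that a $K_2$-hypohamiltonian graph is cyclically $4$-edge-connected, so a planar $K_2$-hypohamiltonian graph of girth~$5$ must be a cyclically $4$-edge-connected planar graph of girth~$5$; these can be generated by \texttt{plantri} up to order~$48$. Applying the $K_2$-hypohamiltonicity test to all of them shows that there is no such graph of order at most~$47$ and exactly one of order~$48$, namely $G_{48}$ (Table~\ref{table:counts_planar}), which gives both minimality and uniqueness. Finally, for part~(iv) it suffices to upgrade the already-known $K_2$-hypohamiltonicity of $G_{48}$ and $G_{52}$ to full hypohamiltonicity, i.e.\ that each is non-hamiltonian while every vertex-deleted subgraph is hamiltonian. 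For $G_{48}$ this follows from Theorem~\ref{thm:gamma} together with Lemma~\ref{lem:j18}, since $G_{48}$ is precisely the graph $\Gamma_1^3=\Gamma_2^3$ built from three copies of the $K_1$- and $K_2$-cell $(J_{18},a,b,c,d)$; for $G_{52}$ it is confirmed by the same hamiltonian-cycle routine (and, as described above, re-verified with an independent implementation).

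The main obstacle here is not mathematical difficulty but exhaustiveness and correctness of the computation. One must be certain that the structural restrictions lose no examples (planar $K_2$-hypohamiltonian $\Rightarrow$ $3$-connected, and girth~$5$ planar $K_2$-hypohamiltonian $\Rightarrow$ cyclically $4$-edge-connected), that \texttt{plantri} enumerates these classes without omission, and that the $K_2$-hypohamiltonicity tester is sound; the independent re-check of the non-hamiltonian graphs obtained from the \texttt{plantri} runs, and the independent tester used for smaller orders, are what address this.
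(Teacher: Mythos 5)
Your proposal is correct and follows essentially the same route as the paper: part (i) rests on the exhaustive \texttt{plantri} generation of $3$-connected planar graphs up to $17$ vertices (using that $K_2$-hamiltonian graphs are $3$-connected), parts (ii)--(iii) on the exhaustive generation of cyclically $4$-edge-connected planar girth-$5$ graphs up to $48$ vertices (using cyclic $4$-edge-connectivity of $K_2$-hypohamiltonian graphs from~\cite{Za21}) together with Table~\ref{table:counts_planar}, and part (iv) on identifying which of the found graphs are hypohamiltonian. The only slight deviation is that for the hypohamiltonicity of $G_{48}$ in (iv) you invoke Theorem~\ref{thm:gamma} and Lemma~\ref{lem:j18} rather than the direct computational check stated in the paper, which is a legitimate alternative the paper itself supports via Corollary~\ref{cor:infinite_fam}.
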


Holton and Sheehan~\cite{HS93} asked whether there is a number $n_0$ such that 
for every integer $n \ge n_0$ there exists a planar hypohamiltonian graph on 
$n$ vertices. This was solved, affirmatively, by Araya and the third 
author~\cite{WA11}. In a similar vein, Holton~\cite{Ho93} writes 
``\emph{Suppose the smallest planar hypohamiltonian graph had $n_0$ vertices. 
Does there exist a planar hypohamiltonian graph on $n$ vertices for all $n > 
n_0$ or are there ``gaps''?}'' For an infinite graph family ${\cal F}$ we call 
such a gap a \emph{Holton gap}; more specifically, if there exist integers 
$a,b,c$ with $a < b < c$ such that there exist members of ${\cal F}$ of order 
$a$ and $c$, but not of $b$, we say that ${\cal F}$ has a \emph{Holton gap at 
$b$}.

Holton's problem has not yet been settled---in particular, we do not know whether planar hypohamiltonian graphs have a Holton gap---but substantial progress has been made; for the state-of-the-art, see~\cite{JMOPZ17}. We recall that therein it is shown that there exist planar hypohamiltonian graphs of order 40 as well as of order $n$ for every $n \ge 42$, but it remains an open question whether planar hypohamiltonian graphs have a Holton gap at 41. We now discuss $K_2$-analogues of these questions.


\begin{theorem}\label{thm:planar_orders}
For every integer $n \ge 177$ there exists a planar $K_2$-hypohamiltonian graph of order~$n$.
\end{theorem}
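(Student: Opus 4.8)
The plan is to reach all large orders by alternating two operations. The first is the gluing operation $\:$ on $3$-fragments, obtained by deleting a cubic vertex from each of two graphs and identifying the two neighbourhoods. By Lemmas~\ref{lem:thom} and~\ref{lem:carol} this turns planar hypohamiltonian $K_2$-hamiltonian graphs of orders $p$ and $q$ into a planar hypohamiltonian $K_2$-hamiltonian graph of order $p+q-5$ (planarity survives because the neighbourhood of a cubic vertex is cofacial in the vertex-deleted plane graph). Moreover the hypothesis of Lemma~\ref{lem:carol} is automatically satisfied whenever the relevant $3$-cut is non-trivial, and the ``seam'' of a previous gluing is exactly such a non-trivial cut; hence the operation may be iterated freely, and it may even be applied to pieces that are only $K_2$-hypohamiltonian as long as each starting piece has a cubic vertex whose neighbourhood avoids its exceptional vertices. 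The second operation is the construction from Lemma~\ref{lem:ext5cycle}: given a plane $K_2$-hamiltonian graph with a \emph{facial} extendable $5$-cycle, it produces a plane $K_2$-hamiltonian graph of order exactly $20$ larger, again non-hamiltonian (if the original was), of girth $5$ (if the original was), and again carrying a facial extendable $5$-cycle, hence iterable. The operation $\:$ can shift the order modulo $20$; the operation of Lemma~\ref{lem:ext5cycle} advances an order through an entire residue class modulo $20$.

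The crux is Step~1: for every residue $\rho\in\{0,1,\dots,19\}$ we construct a planar $K_2$-hypohamiltonian graph $H_\rho$ of order congruent to $\rho$ modulo $20$, of order below $177$, carrying a facial extendable $5$-cycle. The raw material is $G_{48}$ and $G_{52}$ of Figures~\ref{fig:48v} and~\ref{fig:52v}, which are planar hypohamiltonian $K_2$-hamiltonian of girth $5$, together with the planar $K_2$-hypohamiltonian graphs of orders $49$, $50$, and $53$ produced in Section~\ref{subsect:planar}; these five orders realise the residues $8,9,10,12,13$. Since a single gluing lowers the order by $5$, iterating $\:$ on one, two, or three of these five graphs realises, respectively, the residue classes $\{8,9,10,12,13\}$ (orders at most $53$), nine further residue classes (orders at most $101$), and all of $\{0,1,\dots,9,14,15,\dots,19\}$ (orders at most $149$); so every residue class is realised below $177$. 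Using the hamiltonian-cycle program of Section~\ref{sect:orders} we then certify for each of these finitely many graphs $H_\rho$ that it does carry a facial extendable $5$-cycle — it suffices to exhibit a facial $5$-cycle whose five vertices are cubic together with the hamiltonian cycles required by the definition of extendability — and that the hypothesis of Lemma~\ref{lem:carol} holds at every gluing involved, which for the non-hypohamiltonian graphs of orders $49$, $50$, $53$ reduces to a single check that each of them has a cubic vertex whose neighbourhood misses the exceptional vertices.

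Granting the $H_\rho$, the theorem follows at once. Fix $n\ge 177$ and set $\rho:=n\bmod 20$. Then $H_\rho$ has order $m_\rho<177\le n$ with $m_\rho\equiv\rho\equiv n\pmod{20}$, so $(n-m_\rho)/20$ is a non-negative integer, and applying the operation of Lemma~\ref{lem:ext5cycle} to $H_\rho$ exactly $(n-m_\rho)/20$ times yields a planar $K_2$-hypohamiltonian graph of order $n$.

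The main difficulty is the certification in Step~1: being an \emph{extendable} $5$-cycle is a global hamiltonicity condition, so it is not clear a priori that a $\:$-gluing of graphs each carrying a facial extendable $5$-cycle carries one; consequently one must exhibit the concrete building blocks $H_\rho$ and certify their facial extendable $5$-cycles by computer, and it is the largest order among these certified building blocks, combined with the requirement of covering all twenty residue classes modulo $20$, that produces the constant $177$. A secondary but unavoidable point is the verification of the hypothesis of Lemma~\ref{lem:carol} whenever a non-hypohamiltonian piece is glued in; the rest is elementary bookkeeping.
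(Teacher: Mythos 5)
Your skeleton is the paper's own: the same five planar $K_2$-hypohamiltonian graphs of orders $48,49,50,52,53$, glued via the operation of Lemma~\ref{lem:carol} so as to cover all residues modulo $20$, followed by repeated application of Lemma~\ref{lem:ext5cycle} to advance in steps of $20$. The place where you diverge is exactly the place you flag yourself, and as written it is a genuine gap: your argument needs the glued building blocks $H_\rho$ to contain facial extendable $5$-cycles, you concede that this is ``not clear a priori'' for a $\:$-gluing, and you leave it to a computation that has not been performed and whose success you cannot assert. A proof cannot rest on an unexecuted check that you yourself describe as uncertain. The paper closes precisely this hole with a short theoretical argument: if $G_1$ carries an extendable $5$-cycle $C$ disjoint from the cut $X_1$ and lying in the non-trivial fragment $F_1$, then $(F_1,X_1)\:(F_2,X_2)$ again carries an extendable $5$-cycle, because each hamiltonian cycle of $G_1-v_i$ (resp.\ $G_1-v_i'$) restricts to a hamiltonian path of $F_1$ with ends in $X_1$, which is completed through $F_2$ by the hamiltonian paths guaranteed by the hypothesis of Lemma~\ref{lem:carol}; since the cycle is unchanged inside $F_1$, where $C$ and its relevant chords live, the edge conditions in the definition of extendability are inherited. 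With that preservation argument only the five base graphs need machine certificates (which the paper supplies, cf.\ Figure~\ref{fig:planar_graphs}), and your residue-class bookkeeping would then go through; indeed it is only a cosmetic variant of the paper's, which glues four blocks to reach every order in $\{177,\ldots,196\}$ rather than one block per residue class below $177$.

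A second, smaller inaccuracy: your justification for iterating the gluing --- that the hypothesis of Lemma~\ref{lem:carol} is automatic because ``the seam of a previous gluing is a non-trivial cut'' --- does not do what you want. A subsequent gluing is performed at a \emph{new} cut $N(v)$ for a cubic vertex $v$ of the already-glued graph (gluing again at the old seam would simply discard one of the two previously attached pieces), and such a cut is trivial; what is actually needed is that $N(v)$ avoid the exceptional vertices of the glued graph. Since the $49$-, $50$- and $53$-vertex blocks are not hypohamiltonian, exceptional vertices are genuinely present, and to control them after a gluing one needs the fact, invoked in the paper via Theorem~6 of \cite{Za15}, that a vertex of $(F_1,X_1)\:(F_2,X_2)$ is exceptional if and only if it was exceptional in $G_1$ or in $G_2$. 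This point, too, could in principle be folded into your proposed computations, but as stated the claim that the operation ``may be iterated freely'' is unjustified.
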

\begin{proof}

In Figure~\ref{fig:planar_graphs}, we present planar $K_2$-hypohamiltonian 
graphs of order $n$ for every $n \in \{48, 49, 50, 52, 53 \}$. By 
Lemma~\ref{lem:carol}, these five graphs yield the existence of planar 
$K_2$-hypohamiltonian graphs of order $n$ for every $n \in \{ 177, \ldots, 196 
\}$.

	Applying Lemma~\ref{lem:carol} a suitable number of times to the graphs 
	in 
	Figure~\ref{fig:planar_graphs}, we get a planar 
	$K_2$-hypohamiltonian graph of order $n$ 
	for every $n \in \{177, \ldots, 196\}$. We show that by choosing 
	appropriate $3$-cuts these graphs contain an extendable $5$-cycle.
%


Let $G_1$ and $G_2$ be the graphs to which we apply the operation of 
Lemma~\ref{lem:carol}. For $i\in\{1,2\}$, let $G_i$ contain a $3$-cut $X_i$
and $X_i$-fragments $F_i$ and $F_i'$ such that the conditions of the lemma are 
satisfied and such that $F_i$ is non-trivial. We show that an extendable 
$5$-cycle $C = v_0\ldots v_4$ in $G_1$ disjoint from $X_1$ leads to an 
extendable $5$-cycle in $(F_1, X_1)\:(F_2, X_2) =: G$. Indeed, for every $i =  
0,\ldots,4$, we find a hamiltonian cycle in $G_1 - v_i$, which leads to a 
hamiltonian path in $F_1$ with endpoints in $X_1$. Since there exists, for each 
$x\in X_2$, a hamiltonian path in $F_2 - x$ between the two vertices of $X_2 - 
x$, it follows that there exists a hamiltonian cycle in $G - v_i$. Similarly, 
we have for every $i = 0,\ldots, 4$ a hamiltonian cycle in $G_1 - v'_i$, 
where $v'_i$
is the neighbour of $v_i$ that does not lie on $C$. Noting that $v'_i$ lies in 
$F_1$, we see in the same way that there exists a hamiltonian cycle in $G - 
v'_i$. Hence, $G$ contains an extendable $5$-cycle. It is easy to see that any 
trivial $3$-cut in $G_1$ which lies in $F_1$ gives rise to a trivial $3$-cut in 
$G$ and by Theorem~6 in~\cite{Za15}, we can infer that a vertex in $G$ is 
exceptional if and only if it was exceptional in $G_1$ or $G_2$. 

Figure~\ref{fig:planar_graphs} depicts $K_2$-hypohamiltonian 
planar graphs of order $48$, $49$, $50$, $52$, and $53$, their exceptional 
vertices, and an extendable $5$-cycle. In the GitHub repository \cite{Re22}, 
we attached a file with these planar $K_2$-hypohamiltonian graphs 
together with an extendable $5$-cycle $C$ and paths which prove that $C$ is 
indeed an extendable $5$-cycle so that this can be verified by hand. We can 
then apply the operation of 
Lemma~\ref{lem:carol} to the graphs of Figure~\ref{fig:planar_graphs} a 
suitable number of times with the appropriate $3$-cuts 
to obtain planar 
$K_2$-hypohamiltonian graphs of order $n$ with an extendable $5$-cycle for 
every $n\in \{177,\ldots,196\}$.

The proof is now completed by applying
Lemma~\ref{lem:ext5cycle} to these twenty graphs.
\end{proof}

\begin{figure}[!htb]
	\centering
	\begin{adjustbox}{minipage=\linewidth,scale=0.96}
		\centering
	\begin{subfigure}{0.3\linewidth}
		\begin{tikzpicture}[scale=0.05]
			\definecolor{marked}{rgb}{0.25,0.5,0.25}
			\node [circle,fill,scale=0.25] (48) at (17.891833,40.590508) {};
			\node [circle,fill,scale=0.25] (47) at (22.958057,23.835542) {};
			\node [circle,fill,scale=0.25] (46) at (9.746136,51.727373) {};
			\node [circle,fill,scale=0.25] (45) at (20.618102,67.014349) {};
			\node [circle,fill,scale=0.25] (44) at (26.953641,63.239515) {};
			\node [circle,fill,scale=0.25] (43) at (24.988962,50.524283) {};
			\node [circle,fill,scale=0.25] (42) at (41.423841,42.820089) {};
			\node [circle,fill,scale=0.25] (41) at (30.143487,33.073952) {};
			\node [circle,fill,scale=0.25] (40) at (24.999998,6.694259) {};
			\node [circle,fill,scale=0.25] (39) at (71.611478,16.032010) {};
			\node [circle,fill,scale=0.25] (38) at (74.999998,6.694259) {};
			\node [circle,fill,scale=0.25] (37) at (0.000000,50.005519) {};
			\node [circle,fill,scale=0.25] (36) at (28.587195,84.089404) {};
			\node [circle,fill,scale=0.25] (35) at (42.097130,82.488962) {};
			\node [circle,fill,scale=0.25] (34) at (36.964680,71.263797) {};
			\node [circle,fill,scale=0.25] (33) at (32.207505,48.570640) {};
			\node [circle,fill,scale=0.25] (32) at (38.421633,46.285872) {};
			\node [circle,fill,scale=0.25] (31) at (48.090507,51.009934) {};
			\node [circle,fill,scale=0.25] (30) at (40.894039,34.122517) {};
			\node [circle,fill,scale=0.25] (29) at (36.004415,25.645694) {};
			\node [circle,fill,scale=0.25] (28) at (40.927152,13.504415) {};
			\node [circle,fill,scale=0.25] (27) at (57.980132,17.544150) {};
			\node [circle,fill,scale=0.25] (26) at (79.426047,33.073952) {};
			\node [circle,fill,scale=0.25] (25) at (90.242824,48.383002) {};
			\node [circle,fill,scale=0.25] (24) at (99.999999,50.005519) {};
			\node [circle,fill,scale=0.25] (23) at (24.999998,93.305740) {};
			\node [circle,fill,scale=0.25] (22) at (59.128034,86.473511) {};
			\node [circle,fill,scale=0.25] (21) at (42.174393,65.921633) {};
			\node [circle,fill,scale=0.25] (20) at (46.997792,61.473509) {};
			\node [circle,fill,scale=0.25] (19) at (51.688741,60.877483) {};
			\node [circle,fill,scale=0.25] (18) at (51.754966,48.945916) {};
			\node [circle,fill,scale=0.25] (17) at (48.134657,39.056291) {};
			\node [circle,fill,scale=0.25] (16) at (45.364238,24.376379) {};
			\node [circle,fill,scale=0.25] (15) at (63.013245,28.747241) {};
			\node [circle,fill,scale=0.25] (14) at (73.013245,36.804635) {};
			\node [circle,fill,scale=0.25] (13) at (82.064017,59.431567) {};
			\node [circle,fill,scale=0.25] (12) at (74.999998,93.305740) {};
			\node [circle,fill,scale=0.25] (11) at (54.448123,75.579471) {};
			\node [circle,fill,scale=0.25] (10) at (58.951434,65.822296) {};
			\node [circle,fill,scale=0.25] (9) at (58.432671,57.124724) {};
			\node [circle,fill,scale=0.25] (8) at (52.759381,38.416115) {};
			\node [circle,fill,scale=0.25] (7) at (57.693156,34.023178) {};
			\node [circle,fill,scale=0.25] (6) at (74.922736,49.486755) {};
			\node [circle,fill,scale=0.25] (5) at (76.964680,76.186533) {};
			\node [circle,fill,scale=0.25] (4) at (63.874171,74.299116) {};
			\node [circle,fill,scale=0.25] (3) at (61.412802,53.581678) {};
			\node [circle,fill,scale=0.25] (2) at (67.671081,51.418322) {};
			\node [circle,fill,scale=0.25] (1) at (69.757173,66.870861) {};
			\draw [black] (48) to (46);
			\draw [black] (48) to (43);
			\draw [black] (48) to (47);
			\draw [black] (47) to (40);
			\draw [black] (47) to (41);
			\draw [black] (46) to (37);
			\draw [black] (46) to (45);
			\draw [black] (45) to (36);
			\draw [black] (45) to (44);
			\draw [black] (44) to (34);
			\draw [black] (44) to (43);
			\draw [black] (43) to (33);
			\draw [black] (42) to (32);
			\draw [black] (42) to (31);
			\draw [black] (42) to (30);
			\draw [black] (41) to (32);
			\draw [black] (41) to (29);
			\draw [black] (41) to (33);
			\draw [black] (40) to (37);
			\draw [black] (40) to (28);
			\draw [black] (40) to (38);
			\draw [black] (39) to (26);
			\draw [black] (39) to (38);
			\draw [black] (39) to (27);
			\draw [black] (38) to (24);
			\draw [black] (37) to (23);
			\draw [black] (36) to (35);
			\draw [black] (36) to (23);
			\draw [black] (35) to (22);
			\draw [black] (35) to (34);
			\draw [black] (34) to (21);
			\draw [black] (33) to (21);
			\draw [black] (32) to (20);
			\draw [black] (31) to (18);
			\draw [black] (31) to (19);
			\draw [black] (30) to (29);
			\draw [black] (30) to (17);
			\draw [black] (29) to (16);
			\draw [black] (28) to (16);
			\draw [black] (28) to (27);
			\draw [black] (27) to (15);
			\draw [black] (26) to (25);
			\draw [black] (26) to (14);
			\draw [black] (25) to (13);
			\draw [black] (25) to (24);
			\draw [black] (24) to (12);
			\draw [black] (23) to (12);
			\draw [black] (22) to (11);
			\draw [black] (22) to (12);
			\draw [black] (21) to (11);
			\draw [black] (20) to (11);
			\draw [black] (20) to (19);
			\draw [black] (19) to (10);
			\draw [black] (18) to (9);
			\draw [black] (18) to (17);
			\draw [black] (17) to (8);
			\draw [black] (16) to (7);
			\draw [black] (16) to (8);
			\draw [black] (15) to (14);
			\draw [black] (15) to (7);
			\draw [black] (14) to (6);
			\draw [black] (13) to (5);
			\draw [black] (13) to (6);
			\draw [black] (12) to (5);
			\draw [black] (11) to (4);
			\draw [black] (10) to (9);
			\draw [black] (10) to (4);
			\draw [black] (9) to (3);
			\draw [black] (8) to (3);
			\draw [black] (7) to (2);
			\draw [black] (6) to (2);
			\draw [black] (5) to (1);
			\draw [black] (4) to (1);
			\draw [black] (3) to (1);
			\draw [black] (2) to (1);
			\begin{pgfonlayer}{bg}
				\path[fill=darkgray] (2.center) -- (6.center) -- (14.center) -- 
				(15.center) -- (7.center);
			\end{pgfonlayer}
		\end{tikzpicture}
	\end{subfigure}\hfill
	\begin{subfigure}{0.3\linewidth}
		\begin{tikzpicture}[scale=0.05]
			\definecolor{marked}{rgb}{0.25,0.5,0.25}
			\node [circle,fill,scale=0.25] (49) at (27.016554,26.812277) {};
			\node [circle,fill,scale=0.25] (48) at (18.164678,67.928974) {};
			\node [circle,fill,scale=0.25] (47) at (24.345277,65.582442) {};
			\node [circle,fill,scale=0.25] (46) at (9.333755,55.106850) {};
			\node [circle,fill,scale=0.25] (45) at (15.870523,37.046930) {};
			\node [circle,fill,scale=0.25] (44) at (39.503458,28.456946) {};
			\node [circle,fill,scale=0.25] (43) at (50.377122,23.271526) {};
			\node [circle,fill,scale=0.25] (42) at (49.706685,15.907187) {};
			\node [circle,fill,scale=0.25] (41) at (31.175363,14.472030) {};
			\node [circle,fill,scale=0.25] (40) at (0.000000,49.994762) {};
			\node [circle,fill,scale=0.25] (39) at (24.020534,81.882463) {};
			\node [circle,fill,scale=0.25] (38) at (32.013410,70.296459) {};
			\node [circle,fill,scale=0.25] (37) at (25.109994,57.683846) {};
			\node [circle,fill,scale=0.25] (36) at (18.709410,51.429918) {};
			\node [circle,fill,scale=0.25] (35) at (27.309869,40.891472) {};
			\node [circle,fill,scale=0.25] (34) at (46.406873,35.224177) {};
			\node [circle,fill,scale=0.25] (33) at (62.853552,27.870312) {};
			\node [circle,fill,scale=0.25] (32) at (73.957680,26.183742) {};
			\node [circle,fill,scale=0.25] (31) at (68.300859,13.990153) {};
			\node [circle,fill,scale=0.25] (30) at (25.005238,6.699142) {};
			\node [circle,fill,scale=0.25] (29) at (25.005238,93.300857) {};
			\node [circle,fill,scale=0.25] (28) at (42.750892,87.748793) {};
			\node [circle,fill,scale=0.25] (27) at (33.270481,78.498847) {};
			\node [circle,fill,scale=0.25] (26) at (38.309241,64.367274) {};
			\node [circle,fill,scale=0.25] (25) at (35.030380,57.411481) {};
			\node [circle,fill,scale=0.25] (24) at (27.728893,46.579718) {};
			\node [circle,fill,scale=0.25] (23) at (42.489002,40.482924) {};
			\node [circle,fill,scale=0.25] (22) at (62.005029,35.685103) {};
			\node [circle,fill,scale=0.25] (21) at (85.899853,38.503037) {};
			\node [circle,fill,scale=0.25] (20) at (75.005238,6.699142) {};
			\node [circle,fill,scale=0.25] (19) at (75.005238,93.300857) {};
			\node [circle,fill,scale=0.25] (18) at (71.590195,85.276556) {};
			\node [circle,fill,scale=0.25] (17) at (58.495706,84.124239) {};
			\node [circle,fill,scale=0.25] (16) at (43.494659,78.236956) {};
			\node [circle,fill,scale=0.25] (15) at (45.547875,65.582442) {};
			\node [circle,fill,scale=0.25] (14) at (39.451080,51.440393) {};
			\node [circle,fill,scale=0.25] (13) at (55.164468,46.988267) {};
			\node [circle,fill,scale=0.25] (12) at (74.858580,41.509532) {};
			\node [circle,fill,scale=0.25] (11) at (99.999999,49.994762) {};
			\node [circle,fill,scale=0.25] (10) at (78.064111,70.924994) {};
			\node [circle,fill,scale=0.25] (9) at (61.114604,75.356170) {};
			\node [circle,fill,scale=0.25] (8) at (53.781690,71.783992) {};
			\node [circle,fill,scale=0.25] (7) at (51.822754,54.080242) {};
			\node [circle,fill,scale=0.25] (6) at (70.595014,47.176827) {};
			\node [circle,fill,scale=0.25] (5) at (88.424473,57.746700) {};
			\node [circle,fill,scale=0.25] (4) at (70.196943,68.683217) {};
			\node [circle,fill,scale=0.25] (3) at (58.453804,60.292268) {};
			\node [circle,fill,scale=0.25] (2) at (77.707942,53.514560) {};
			\node [circle,fill,scale=0.25] (1) at (69.233188,59.600879) {};
			\draw [black] (49) to (44);
			\draw [black] (49) to (41);
			\draw [black] (49) to (45);
			\draw [black] (48) to (39);
			\draw [black] (48) to (47);
			\draw [black] (48) to (46);
			\draw [black] (47) to (37);
			\draw [black] (47) to (38);
			\draw [black] (46) to (40);
			\draw [black] (46) to (36);
			\draw [black] (45) to (35);
			\draw [black] (45) to (40);
			\draw [black] (44) to (34);
			\draw [black] (44) to (43);
			\draw [black] (43) to (33);
			\draw [black] (43) to (42);
			\draw [black] (42) to (31);
			\draw [black] (42) to (41);
			\draw [black] (41) to (30);
			\draw [black] (40) to (29);
			\draw [black] (40) to (30);
			\draw [black] (39) to (27);
			\draw [black] (39) to (29);
			\draw [black] (38) to (26);
			\draw [black] (38) to (27);
			\draw [black] (37) to (25);
			\draw [black] (37) to (36);
			\draw [black] (36) to (24);
			\draw [black] (35) to (23);
			\draw [black] (35) to (24);
			\draw [black] (34) to (22);
			\draw [black] (34) to (23);
			\draw [black] (33) to (32);
			\draw [black] (33) to (22);
			\draw [black] (32) to (21);
			\draw [black] (32) to (31);
			\draw [black] (31) to (20);
			\draw [black] (30) to (20);
			\draw [black] (29) to (19);
			\draw [black] (29) to (28);
			\draw [black] (28) to (16);
			\draw [black] (28) to (17);
			\draw [black] (27) to (16);
			\draw [black] (26) to (25);
			\draw [black] (26) to (15);
			\draw [black] (25) to (14);
			\draw [black] (24) to (14);
			\draw [black] (23) to (13);
			\draw [black] (22) to (12);
			\draw [black] (21) to (11);
			\draw [black] (21) to (12);
			\draw [black] (20) to (11);
			\draw [black] (19) to (18);
			\draw [black] (19) to (11);
			\draw [black] (18) to (10);
			\draw [black] (18) to (17);
			\draw [black] (17) to (9);
			\draw [black] (16) to (8);
			\draw [black] (16) to (15);
			\draw [black] (15) to (7);
			\draw [black] (14) to (7);
			\draw [black] (13) to (6);
			\draw [black] (13) to (7);
			\draw [black] (12) to (6);
			\draw [black] (11) to (5);
			\draw [black] (10) to (4);
			\draw [black] (10) to (5);
			\draw [black] (9) to (8);
			\draw [black] (9) to (4);
			\draw [black] (8) to (3);
			\draw [black] (7) to (3);
			\draw [black] (6) to (2);
			\draw [black] (5) to (2);
			\draw [black] (4) to (1);
			\draw [black] (3) to (1);
			\draw [black] (2) to (1);
			\begin{pgfonlayer}{bg}
				\path[fill=darkgray] (1.center) -- (3.center) -- (8.center) -- 
				(9.center) -- (4.center);
			\end{pgfonlayer}
			\path[draw=black, very thick] (6) circle[radius=2];
			\path[draw=black, very thick] (13) circle[radius=2];
			\path[draw=black, very thick] (35) circle[radius=2];
		\end{tikzpicture}
	\end{subfigure}\hfill
\begin{subfigure}{0.3\linewidth}
	\begin{tikzpicture}[scale=0.05]
		\definecolor{marked}{rgb}{0.25,0.5,0.25}
		\node [circle,fill,scale=0.25] (50) at (38.980281,41.636865) {};
		\node [circle,fill,scale=0.25] (49) at (45.744355,47.512005) {};
		\node [circle,fill,scale=0.25] (48) at (48.339635,41.125983) {};
		\node [circle,fill,scale=0.25] (47) at (40.860326,34.668438) {};
		\node [circle,fill,scale=0.25] (46) at (32.052724,34.014509) {};
		\node [circle,fill,scale=0.25] (45) at (23.725352,40.594666) {};
		\node [circle,fill,scale=0.25] (44) at (25.503221,48.022887) {};
		\node [circle,fill,scale=0.25] (43) at (32.165119,54.429344) {};
		\node [circle,fill,scale=0.25] (42) at (38.162870,59.047716) {};
		\node [circle,fill,scale=0.25] (41) at (51.496884,56.544395) {};
		\node [circle,fill,scale=0.25] (40) at (55.318279,48.891386) {};
		\node [circle,fill,scale=0.25] (39) at (59.323593,33.391234) {};
		\node [circle,fill,scale=0.25] (38) at (48.114847,27.618269) {};
		\node [circle,fill,scale=0.25] (37) at (42.106877,21.967917) {};
		\node [circle,fill,scale=0.25] (36) at (32.604477,26.187799) {};
		\node [circle,fill,scale=0.25] (35) at (17.094105,37.376111) {};
		\node [circle,fill,scale=0.25] (34) at (17.809341,56.432001) {};
		\node [circle,fill,scale=0.25] (33) at (32.880352,65.454172) {};
		\node [circle,fill,scale=0.25] (32) at (41.606213,68.744251) {};
		\node [circle,fill,scale=0.25] (31) at (55.308062,71.308878) {};
		\node [circle,fill,scale=0.25] (30) at (57.014407,61.918871) {};
		\node [circle,fill,scale=0.25] (29) at (65.770920,45.008685) {};
		\node [circle,fill,scale=0.25] (28) at (67.569224,34.852356) {};
		\node [circle,fill,scale=0.25] (27) at (57.821600,25.983447) {};
		\node [circle,fill,scale=0.25] (26) at (45.631961,15.081231) {};
		\node [circle,fill,scale=0.25] (25) at (24.604068,25.084296) {};
		\node [circle,fill,scale=0.25] (24) at (8.950650,48.268110) {};
		\node [circle,fill,scale=0.25] (23) at (21.477472,67.497700) {};
		\node [circle,fill,scale=0.25] (22) at (35.097579,77.327065) {};
		\node [circle,fill,scale=0.25] (21) at (43.516911,79.922344) {};
		\node [circle,fill,scale=0.25] (20) at (63.727394,72.514559) {};
		\node [circle,fill,scale=0.25] (19) at (72.933484,63.073464) {};
		\node [circle,fill,scale=0.25] (18) at (71.666496,52.559517) {};
		\node [circle,fill,scale=0.25] (17) at (75.947685,29.549402) {};
		\node [circle,fill,scale=0.25] (16) at (60.028610,17.288241) {};
		\node [circle,fill,scale=0.25] (15) at (30.172679,13.528150) {};
		\node [circle,fill,scale=0.25] (14) at (0.000000,49.994891) {};
		\node [circle,fill,scale=0.25] (13) at (23.306428,79.043628) {};
		\node [circle,fill,scale=0.25] (12) at (43.813222,87.677530) {};
		\node [circle,fill,scale=0.25] (11) at (60.447532,82.783282) {};
		\node [circle,fill,scale=0.25] (10) at (80.412792,65.321344) {};
		\node [circle,fill,scale=0.25] (9) at (81.485644,42.413405) {};
		\node [circle,fill,scale=0.25] (8) at (71.247574,15.377542) {};
		\node [circle,fill,scale=0.25] (7) at (25.002556,6.702770) {};
		\node [circle,fill,scale=0.25] (6) at (25.002556,93.297229) {};
		\node [circle,fill,scale=0.25] (5) at (73.188923,83.059160) {};
		\node [circle,fill,scale=0.25] (4) at (90.180852,49.586185) {};
		\node [circle,fill,scale=0.25] (3) at (74.997445,6.702770) {};
		\node [circle,fill,scale=0.25] (2) at (74.997445,93.297229) {};
		\node [circle,fill,scale=0.25] (1) at (99.999999,49.994891) {};
		\draw [black] (50) to (47);
		\draw [black] (50) to (44);
		\draw [black] (50) to (48);
		\draw [black] (49) to (48);
		\draw [black] (49) to (43);
		\draw [black] (49) to (40);
		\draw [black] (48) to (39);
		\draw [black] (47) to (38);
		\draw [black] (47) to (46);
		\draw [black] (46) to (45);
		\draw [black] (46) to (36);
		\draw [black] (45) to (35);
		\draw [black] (45) to (44);
		\draw [black] (44) to (34);
		\draw [black] (43) to (34);
		\draw [black] (43) to (42);
		\draw [black] (42) to (33);
		\draw [black] (42) to (41);
		\draw [black] (41) to (40);
		\draw [black] (41) to (30);
		\draw [black] (40) to (29);
		\draw [black] (39) to (27);
		\draw [black] (39) to (28);
		\draw [black] (38) to (27);
		\draw [black] (38) to (37);
		\draw [black] (37) to (26);
		\draw [black] (37) to (36);
		\draw [black] (36) to (25);
		\draw [black] (35) to (24);
		\draw [black] (35) to (25);
		\draw [black] (34) to (23);
		\draw [black] (34) to (24);
		\draw [black] (33) to (23);
		\draw [black] (33) to (32);
		\draw [black] (32) to (22);
		\draw [black] (32) to (30);
		\draw [black] (31) to (20);
		\draw [black] (31) to (30);
		\draw [black] (31) to (21);
		\draw [black] (30) to (18);
		\draw [black] (29) to (28);
		\draw [black] (29) to (18);
		\draw [black] (28) to (17);
		\draw [black] (27) to (16);
		\draw [black] (26) to (15);
		\draw [black] (26) to (16);
		\draw [black] (25) to (15);
		\draw [black] (24) to (14);
		\draw [black] (23) to (13);
		\draw [black] (22) to (21);
		\draw [black] (22) to (13);
		\draw [black] (21) to (12);
		\draw [black] (20) to (19);
		\draw [black] (20) to (11);
		\draw [black] (19) to (10);
		\draw [black] (19) to (18);
		\draw [black] (18) to (9);
		\draw [black] (17) to (8);
		\draw [black] (17) to (9);
		\draw [black] (16) to (8);
		\draw [black] (15) to (7);
		\draw [black] (14) to (6);
		\draw [black] (14) to (7);
		\draw [black] (13) to (6);
		\draw [black] (12) to (11);
		\draw [black] (12) to (6);
		\draw [black] (11) to (5);
		\draw [black] (10) to (4);
		\draw [black] (10) to (5);
		\draw [black] (9) to (4);
		\draw [black] (8) to (3);
		\draw [black] (7) to (3);
		\draw [black] (6) to (2);
		\draw [black] (5) to (2);
		\draw [black] (4) to (1);
		\draw [black] (3) to (1);
		\draw [black] (2) to (1);
		\begin{pgfonlayer}{bg}
			\path[fill=darkgray] (5.center) -- (10.center) -- (19.center) -- 
			(20.center) -- (11.center);
		\end{pgfonlayer}
		\path[draw=black, very thick] (3) circle[radius=2];
		\path[draw=black, very thick] (8) circle[radius=2];
		\path[draw=black, very thick] (14) circle[radius=2];
		\path[draw=black, very thick] (39) circle[radius=2];
		\path[draw=black, very thick] (48) circle[radius=2];
	\end{tikzpicture}
\end{subfigure}\hfill
\begin{subfigure}{0.3\linewidth}
	\begin{tikzpicture}[scale=0.05]
    \definecolor{marked}{rgb}{0.25,0.5,0.25}
    \node [circle,fill,scale=0.25] (52) at (38.308632,49.071934) {};
    \node [circle,fill,scale=0.25] (51) at (46.838770,57.048704) {};
    \node [circle,fill,scale=0.25] (50) at (42.075296,39.010368) {};
    \node [circle,fill,scale=0.25] (49) at (32.717041,49.326587) {};
    \node [circle,fill,scale=0.25] (48) at (40.880959,59.224579) {};
    \node [circle,fill,scale=0.25] (47) at (52.006942,61.992198) {};
    \node [circle,fill,scale=0.25] (46) at (50.485076,48.276636) {};
    \node [circle,fill,scale=0.25] (45) at (48.616901,40.386796) {};
    \node [circle,fill,scale=0.25] (44) at (39.784644,29.537793) {};
    \node [circle,fill,scale=0.25] (43) at (29.371875,40.285709) {};
    \node [circle,fill,scale=0.25] (42) at (21.811524,44.018757) {};
    \node [circle,fill,scale=0.25] (41) at (21.516960,54.728020) {};
    \node [circle,fill,scale=0.25] (40) at (28.828115,58.544523) {};
    \node [circle,fill,scale=0.25] (39) at (38.476756,69.711106) {};
    \node [circle,fill,scale=0.25] (38) at (52.393446,70.120364) {};
    \node [circle,fill,scale=0.25] (37) at (59.657253,54.787101) {};
    \node [circle,fill,scale=0.25] (36) at (58.679839,47.998928) {};
    \node [circle,fill,scale=0.25] (35) at (55.620490,36.037370) {};
    \node [circle,fill,scale=0.25] (34) at (55.525663,29.137399) {};
    \node [circle,fill,scale=0.25] (33) at (46.029418,23.815029) {};
    \node [circle,fill,scale=0.25] (32) at (32.301909,29.945348) {};
    \node [circle,fill,scale=0.25] (31) at (17.367916,37.365777) {};
    \node [circle,fill,scale=0.25] (30) at (16.937763,61.448556) {};
    \node [circle,fill,scale=0.25] (29) at (31.333726,69.155890) {};
    \node [circle,fill,scale=0.25] (28) at (44.730302,75.818371) {};
    \node [circle,fill,scale=0.25] (27) at (61.572465,66.709118) {};
    \node [circle,fill,scale=0.25] (26) at (66.617622,54.595479) {};
    \node [circle,fill,scale=0.25] (25) at (64.281692,41.550647) {};
    \node [circle,fill,scale=0.25] (24) at (65.937031,29.681171) {};
    \node [circle,fill,scale=0.25] (23) at (46.525209,16.152365) {};
    \node [circle,fill,scale=0.25] (22) at (26.006089,25.138729) {};
    \node [circle,fill,scale=0.25] (21) at (8.725079,49.299199) {};
    \node [circle,fill,scale=0.25] (20) at (25.124735,73.949719) {};
    \node [circle,fill,scale=0.25] (19) at (45.516276,83.483839) {};
    \node [circle,fill,scale=0.25] (18) at (65.863244,70.345696) {};
    \node [circle,fill,scale=0.25] (17) at (75.207470,58.590860) {};
    \node [circle,fill,scale=0.25] (16) at (74.583801,42.748095) {};
    \node [circle,fill,scale=0.25] (15) at (61.864907,18.563000) {};
    \node [circle,fill,scale=0.25] (14) at (30.919694,13.834937) {};
    \node [circle,fill,scale=0.25] (13) at (0.000000,49.127333) {};
    \node [circle,fill,scale=0.25] (12) at (29.747768,85.459113) {};
    \node [circle,fill,scale=0.25] (11) at (60.518727,81.671322) {};
    \node [circle,fill,scale=0.25] (10) at (80.102070,65.193001) {};
    \node [circle,fill,scale=0.25] (9) at (80.264372,35.714097) {};
    \node [circle,fill,scale=0.25] (8) at (71.768394,16.250516) {};
    \node [circle,fill,scale=0.25] (7) at (25.748541,6.259267) {};
    \node [circle,fill,scale=0.25] (6) at (24.236782,92.867894) {};
    \node [circle,fill,scale=0.25] (5) at (70.596343,84.518490) {};
    \node [circle,fill,scale=0.25] (4) at (90.014078,50.620256) {};
    \node [circle,fill,scale=0.25] (3) at (75.753431,7.132105) {};
    \node [circle,fill,scale=0.25] (2) at (74.241672,93.740732) {};
    \node [circle,fill,scale=0.25] (1) at (99.999999,50.872839) {};
    \draw [black] (52) to (48);
    \draw [black] (52) to (50);
    \draw [black] (52) to (49);
    \draw [black] (51) to (47);
    \draw [black] (51) to (46);
    \draw [black] (51) to (48);
    \draw [black] (50) to (44);
    \draw [black] (50) to (45);
    \draw [black] (49) to (43);
    \draw [black] (49) to (40);
    \draw [black] (48) to (39);
    \draw [black] (47) to (38);
    \draw [black] (47) to (37);
    \draw [black] (46) to (36);
    \draw [black] (46) to (45);
    \draw [black] (45) to (35);
    \draw [black] (44) to (32);
    \draw [black] (44) to (33);
    \draw [black] (43) to (42);
    \draw [black] (43) to (32);
    \draw [black] (42) to (31);
    \draw [black] (42) to (41);
    \draw [black] (41) to (30);
    \draw [black] (41) to (40);
    \draw [black] (40) to (29);
    \draw [black] (39) to (28);
    \draw [black] (39) to (29);
    \draw [black] (38) to (27);
    \draw [black] (38) to (28);
    \draw [black] (37) to (36);
    \draw [black] (37) to (26);
    \draw [black] (36) to (25);
    \draw [black] (35) to (34);
    \draw [black] (35) to (25);
    \draw [black] (34) to (24);
    \draw [black] (34) to (33);
    \draw [black] (33) to (23);
    \draw [black] (32) to (22);
    \draw [black] (31) to (21);
    \draw [black] (31) to (22);
    \draw [black] (30) to (20);
    \draw [black] (30) to (21);
    \draw [black] (29) to (20);
    \draw [black] (28) to (19);
    \draw [black] (27) to (18);
    \draw [black] (27) to (26);
    \draw [black] (26) to (16);
    \draw [black] (25) to (16);
    \draw [black] (24) to (15);
    \draw [black] (24) to (16);
    \draw [black] (23) to (14);
    \draw [black] (23) to (15);
    \draw [black] (22) to (14);
    \draw [black] (21) to (13);
    \draw [black] (20) to (12);
    \draw [black] (19) to (11);
    \draw [black] (19) to (12);
    \draw [black] (18) to (17);
    \draw [black] (18) to (11);
    \draw [black] (17) to (10);
    \draw [black] (17) to (16);
    \draw [black] (16) to (9);
    \draw [black] (15) to (8);
    \draw [black] (14) to (7);
    \draw [black] (13) to (6);
    \draw [black] (13) to (7);
    \draw [black] (12) to (6);
    \draw [black] (11) to (5);
    \draw [black] (10) to (4);
    \draw [black] (10) to (5);
    \draw [black] (9) to (8);
    \draw [black] (9) to (4);
    \draw [black] (8) to (3);
    \draw [black] (7) to (3);
    \draw [black] (6) to (2);
    \draw [black] (5) to (2);
    \draw [black] (4) to (1);
    \draw [black] (3) to (1);
    \draw [black] (2) to (1);
\begin{pgfonlayer}{bg}
\path[fill=darkgray] (5.center) -- (10.center) -- (17.center) -- (18.center) -- 
(11.center);
\end{pgfonlayer}
\end{tikzpicture}
\end{subfigure}\qquad\qquad
\begin{subfigure}{0.3\linewidth}
	\begin{tikzpicture}[scale=0.05]
		\definecolor{marked}{rgb}{0.25,0.5,0.25}
		\node [circle,fill,scale=0.25] (53) at (44.493319,93.184638) {};
		\node [circle,fill,scale=0.25] (52) at (39.384647,86.558118) {};
		\node [circle,fill,scale=0.25] (51) at (31.094948,83.097855) {};
		\node [circle,fill,scale=0.25] (50) at (22.755693,84.109674) {};
		\node [circle,fill,scale=0.25] (49) at (28.930459,99.636726) {};
		\node [circle,fill,scale=0.25] (48) at (70.130444,88.083616) {};
		\node [circle,fill,scale=0.25] (47) at (55.115556,85.746962) {};
		\node [circle,fill,scale=0.25] (46) at (44.855992,77.858383) {};
		\node [circle,fill,scale=0.25] (45) at (41.422030,70.070603) {};
		\node [circle,fill,scale=0.25] (44) at (33.458860,73.090793) {};
		\node [circle,fill,scale=0.25] (43) at (20.636641,71.322144) {};
		\node [circle,fill,scale=0.25] (42) at (8.495548,62.049863) {};
		\node [circle,fill,scale=0.25] (41) at (9.644218,31.530242) {};
		\node [circle,fill,scale=0.25] (40) at (0.002854,70.204265) {};
		\node [circle,fill,scale=0.25] (39) at (70.197203,99.999999) {};
		\node [circle,fill,scale=0.25] (38) at (91.164728,66.426166) {};
		\node [circle,fill,scale=0.25] (37) at (78.965010,74.106907) {};
		\node [circle,fill,scale=0.25] (36) at (63.675964,81.622988) {};
		\node [circle,fill,scale=0.25] (35) at (52.706387,75.351964) {};
		\node [circle,fill,scale=0.25] (34) at (45.539094,60.316275) {};
		\node [circle,fill,scale=0.25] (33) at (29.753491,65.749447) {};
		\node [circle,fill,scale=0.25] (32) at (17.613968,62.234641) {};
		\node [circle,fill,scale=0.25] (31) at (12.441720,45.834343) {};
		\node [circle,fill,scale=0.25] (30) at (23.706966,21.527889) {};
		\node [circle,fill,scale=0.25] (29) at (0.360419,28.923397) {};
		\node [circle,fill,scale=0.25] (28) at (99.643788,71.066687) {};
		\node [circle,fill,scale=0.25] (27) at (88.645726,49.233167) {};
		\node [circle,fill,scale=0.25] (26) at (80.377390,48.672076) {};
		\node [circle,fill,scale=0.25] (25) at (75.728544,61.277995) {};
		\node [circle,fill,scale=0.25] (24) at (64.013198,64.261690) {};
		\node [circle,fill,scale=0.25] (23) at (53.131293,62.883726) {};
		\node [circle,fill,scale=0.25] (22) at (38.392990,56.697892) {};
		\node [circle,fill,scale=0.25] (21) at (29.713027,49.994653) {};
		\node [circle,fill,scale=0.25] (20) at (23.804395,39.436415) {};
		\node [circle,fill,scale=0.25] (19) at (29.730423,27.267321) {};
		\node [circle,fill,scale=0.25] (18) at (34.438473,8.485093) {};
		\node [circle,fill,scale=0.25] (17) at (29.802796,0.000000) {};
		\node [circle,fill,scale=0.25] (16) at (99.997146,29.795734) {};
		\node [circle,fill,scale=0.25] (15) at (90.754367,32.495078) {};
		\node [circle,fill,scale=0.25] (14) at (74.762514,35.547339) {};
		\node [circle,fill,scale=0.25] (13) at (68.261065,55.522353) {};
		\node [circle,fill,scale=0.25] (12) at (57.403538,55.465230) {};
		\node [circle,fill,scale=0.25] (11) at (40.713583,48.474372) {};
		\node [circle,fill,scale=0.25] (10) at (33.129008,40.761791) {};
		\node [circle,fill,scale=0.25] (9) at (43.156725,22.154880) {};
		\node [circle,fill,scale=0.25] (8) at (49.632730,12.383889) {};
		\node [circle,fill,scale=0.25] (7) at (71.069539,0.363275) {};
		\node [circle,fill,scale=0.25] (6) at (77.335217,24.166300) {};
		\node [circle,fill,scale=0.25] (5) at (66.502555,38.219234) {};
		\node [circle,fill,scale=0.25] (4) at (53.346958,41.866971) {};
		\node [circle,fill,scale=0.25] (3) at (46.866510,29.954433) {};
		\node [circle,fill,scale=0.25] (2) at (66.408372,11.454558) {};
		\node [circle,fill,scale=0.25] (1) at (58.762003,29.165055) {};
		\draw [black] (53) to (47);
		\draw [black] (53) to (52);
		\draw [black] (53) to (49);
		\draw [black] (52) to (46);
		\draw [black] (52) to (51);
		\draw [black] (51) to (50);
		\draw [black] (51) to (44);
		\draw [black] (50) to (43);
		\draw [black] (50) to (49);
		\draw [black] (49) to (39);
		\draw [black] (49) to (40);
		\draw [black] (48) to (36);
		\draw [black] (48) to (39);
		\draw [black] (48) to (37);
		\draw [black] (47) to (35);
		\draw [black] (47) to (36);
		\draw [black] (46) to (35);
		\draw [black] (46) to (45);
		\draw [black] (45) to (34);
		\draw [black] (45) to (44);
		\draw [black] (44) to (33);
		\draw [black] (43) to (32);
		\draw [black] (43) to (33);
		\draw [black] (42) to (31);
		\draw [black] (42) to (40);
		\draw [black] (42) to (32);
		\draw [black] (41) to (29);
		\draw [black] (41) to (31);
		\draw [black] (41) to (30);
		\draw [black] (40) to (29);
		\draw [black] (39) to (28);
		\draw [black] (38) to (27);
		\draw [black] (38) to (37);
		\draw [black] (38) to (28);
		\draw [black] (37) to (25);
		\draw [black] (36) to (24);
		\draw [black] (35) to (23);
		\draw [black] (34) to (22);
		\draw [black] (34) to (23);
		\draw [black] (33) to (22);
		\draw [black] (32) to (21);
		\draw [black] (31) to (20);
		\draw [black] (30) to (18);
		\draw [black] (30) to (19);
		\draw [black] (29) to (17);
		\draw [black] (28) to (16);
		\draw [black] (27) to (26);
		\draw [black] (27) to (15);
		\draw [black] (26) to (14);
		\draw [black] (26) to (25);
		\draw [black] (25) to (13);
		\draw [black] (24) to (12);
		\draw [black] (24) to (13);
		\draw [black] (23) to (12);
		\draw [black] (22) to (11);
		\draw [black] (21) to (10);
		\draw [black] (21) to (11);
		\draw [black] (20) to (10);
		\draw [black] (20) to (19);
		\draw [black] (19) to (9);
		\draw [black] (18) to (17);
		\draw [black] (18) to (8);
		\draw [black] (17) to (7);
		\draw [black] (16) to (15);
		\draw [black] (16) to (7);
		\draw [black] (15) to (6);
		\draw [black] (14) to (5);
		\draw [black] (14) to (6);
		\draw [black] (13) to (5);
		\draw [black] (12) to (4);
		\draw [black] (11) to (4);
		\draw [black] (10) to (3);
		\draw [black] (9) to (8);
		\draw [black] (9) to (3);
		\draw [black] (8) to (2);
		\draw [black] (7) to (2);
		\draw [black] (6) to (2);
		\draw [black] (5) to (1);
		\draw [black] (4) to (1);
		\draw [black] (3) to (1);
		\draw [black] (2) to (1);
		\begin{pgfonlayer}{bg}
			\path[fill=darkgray] (6.center) -- (14.center) -- (26.center) -- 
			(27.center) -- (15.center);
		\end{pgfonlayer}
		\path[draw=black, very thick] (4) circle[radius=2];
		\path[draw=black, very thick] (21) circle[radius=2];
		\path[draw=black, very thick] (24) circle[radius=2];
		\path[draw=black, very thick] (32) circle[radius=2];
		\path[draw=black, very thick] (36) circle[radius=2];
	\end{tikzpicture}
\end{subfigure}
	\end{adjustbox}
\caption{planar $K_2$-hypohamiltonian graphs on $48$, $49$, $50$, $52$, and $53$
vertices together with an extendable $5$-cycle and all of their exceptional
vertices.} 
\label{fig:planar_graphs}
\end{figure}
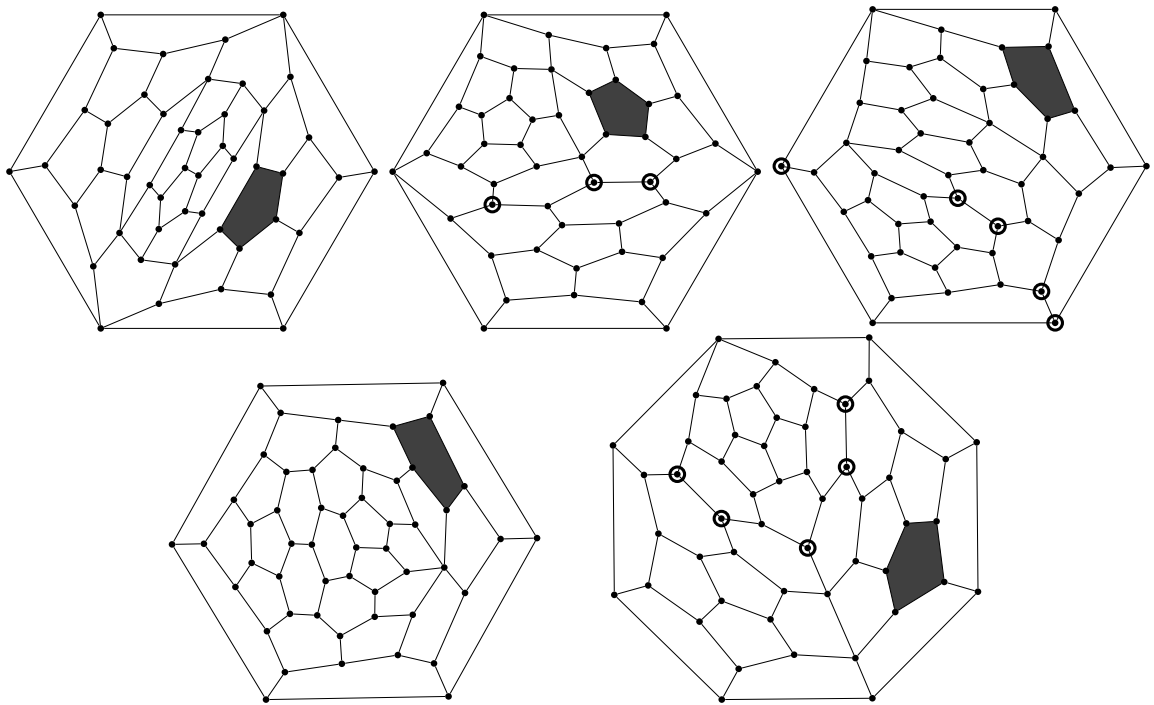

\subsection{The cubic planar case}
\label{subsect:planar_cubic}


Using the program \verb|plantri|~\cite{BM07} we generated all cyclically 
4-edge-connected cubic planar
graphs of
girth 5 up to 78
vertices and verified
which of them are $K_2$-hypohamiltonian using the program mentioned at the
beginning of Section~\ref{sect:orders}. The results are summarised in
Table~\ref{table:counts_cubic_planar}. 

The counts of all cyclically 4-edge-connected non-hamiltonian cubic planar
graphs of girth 5 up to 76 vertices were
already tabulated before by McKay in~\cite{M17} and in~\cite{GZ17} the first 
and last author
determined
that there are exactly 860~350 such graphs on 78 vertices. 
All cyclically 4-edge-connected non-hamiltonian cubic planar
graphs of girth $5$ were stored and we verified that these are indeed 
non-hamiltonian and contain
the same number of $K_2$-hypohamiltonian graphs as in Table~\ref{table:counts_cubic_planar} using an independent program.

The three cubic planar $K_2$-hypohamiltonian graphs up to 74 vertices are shown 
in Figure~\ref{fig:cubic_planar} and the six remaining cubic planar 
$K_2$-hypohamiltonian graphs up to 78 vertices are shown in 
Figure~\ref{fig:cubic_planar_76_78} in the Appendix.
The $K_2$-hypohamiltonian graphs from Table~\ref{table:counts_cubic_planar} can 
be obtained from the database of interesting graphs from the \textit{House of 
Graphs}~\cite{CDG23} by searching for the keywords ``cubic planar 
K2-hypohamiltonian''.

\begin{table}[ht!]
\centering
\begin{tabular}{ l | ccccccc}
Order 		& $\leq 66$ & 68 & 70 & 72 & 74 & 76 & 78 \\
\hline
Number of graphs & 0 & 1 & 1 & 0 & 1 & 3 & 3\\
\end{tabular}
\caption{Counts of all $K_2$-hypohamiltonian graphs among cubic planar graphs 
up to 78 vertices.}
\label{table:counts_cubic_planar}
\end{table}

The following observation immediately follows from the results in 
Table~\ref{table:counts_cubic_planar}. (Recall that $K_2$-hypohamiltonian 
graphs are $3$-connected and cyclically $4$-edge-connected. Cubic 
$K_2$-hypohamiltonian graphs must have girth at least $5$. Since a planar 
$3$-connected graph can have girth at most $5$, cubic planar 
$K_2$-hypohamiltonian graphs must have girth $5$.) 


\begin{observation}
There exists exactly one cubic planar $K_2$-hypohamiltonian graph of order $68, 
70$, and $74$, and no such graph on fewer than $68$ vertices or of order~$72$. 
Moreover, there exist such graphs of order $76$ and $78$.
\end{observation}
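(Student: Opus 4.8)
The plan is to reduce the statement entirely to the finite computation summarised in Table~\ref{table:counts_cubic_planar}, supported by the structural constraints that delimit the search space. First I would record which graphs can possibly occur: by \cite{Za21} every $K_2$-hypohamiltonian graph is $3$-connected and cyclically $4$-edge-connected; it is elementary that a cubic $K_2$-hypohamiltonian graph has girth at least $5$ (a triangle or $4$-cycle through cubic vertices is incompatible with $K_2$-hamiltonicity, cf.\ \cite[Proposition~1]{Za21}); and since a $3$-connected planar graph has girth at most $5$, a cubic planar $K_2$-hypohamiltonian graph has girth \emph{exactly} $5$. Hence it suffices to examine cyclically $4$-edge-connected cubic planar graphs of girth $5$, and these have even order bounded below (the smallest being the dodecahedron on $20$ vertices).

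Second, I would generate all such graphs on at most $78$ vertices with \verb|plantri|~\cite{BM07} and filter them with the $K_2$-hypohamiltonicity test described at the beginning of Section~\ref{sect:orders}. The resulting counts, as tabulated in Table~\ref{table:counts_cubic_planar}, give: no $K_2$-hypohamiltonian graph of order at most $66$; exactly one of order $68$, one of order $70$, and one of order $74$; none of order $72$; and at least three of order $76$ and at least three of order $78$. The graphs of order at most $74$ are exhibited in Figure~\ref{fig:cubic_planar} and those of order $76$ and $78$ in Figure~\ref{fig:cubic_planar_76_78} in the Appendix, so the existence parts (and, by inspection, the girth-$5$ claim) can also be verified by hand. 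Combining the ``exactly one'' counts with the ``no such graph on fewer than $68$ vertices or of order $72$'' count yields the Observation verbatim.

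The only genuine difficulty is guarding against computational error, since the generation at $76$ and $78$ vertices involves very large classes of graphs and both the generator and the $K_2$-hypohamiltonicity filter must be trusted. I expect this to be the main obstacle, and I would address it exactly as elsewhere in the paper: cross-check the number of cyclically $4$-edge-connected non-hamiltonian cubic planar graphs of girth $5$ up to $78$ vertices against the independent enumerations of McKay~\cite{M17} and of~\cite{GZ17}; store every non-hamiltonian graph produced, re-confirm its non-hamiltonicity; and re-run the $K_2$-hamiltonicity test on those stored graphs with an independently written program. Once the two pipelines agree on all counts, the Observation follows immediately from Table~\ref{table:counts_cubic_planar}.
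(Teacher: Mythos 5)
Your proposal is correct and follows essentially the same route as the paper: the Observation is read off directly from the \verb|plantri| generation of cyclically $4$-edge-connected cubic planar graphs of girth $5$ filtered by the $K_2$-hypohamiltonicity test (Table~\ref{table:counts_cubic_planar}), justified by exactly the structural remarks you give (3-connectivity, cyclic $4$-edge-connectivity, girth at least $5$ for cubic, girth at most $5$ for planar $3$-connected), and verified by the same independent cross-checks. Nothing is missing.
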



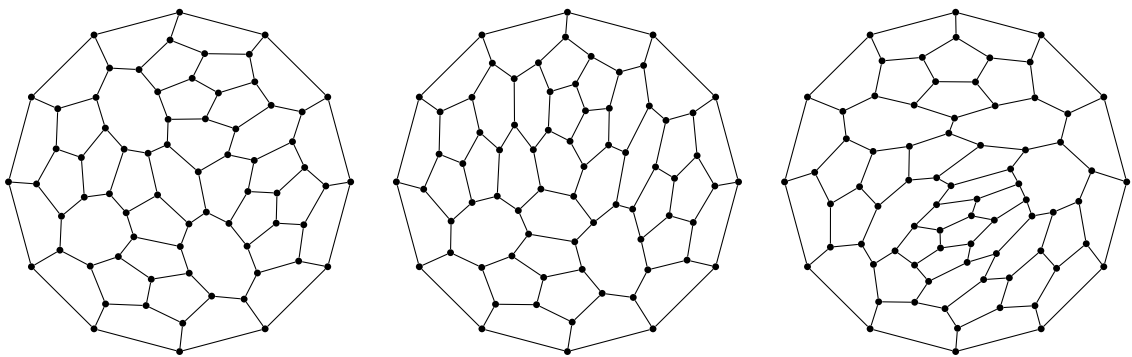
\begin{figure}[!htb]
\begin{center}
\begin{tikzpicture}[scale=0.045]
    \definecolor{marked}{rgb}{0.25,0.5,0.25}
    \node [circle,fill,scale=0.25] (68) at (15.472042,39.887174) {};
    \node [circle,fill,scale=0.25] (67) at (8.188155,49.394392) {};
    \node [circle,fill,scale=0.25] (66) at (13.920691,59.747801) {};
    \node [circle,fill,scale=0.25] (65) at (21.287538,57.176040) {};
    \node [circle,fill,scale=0.25] (64) at (22.150323,45.503567) {};
    \node [circle,fill,scale=0.25] (63) at (32.097227,27.982412) {};
    \node [circle,fill,scale=0.25] (62) at (41.728886,21.312429) {};
    \node [circle,fill,scale=0.25] (61) at (23.884186,25.186661) {};
    \node [circle,fill,scale=0.25] (60) at (15.024059,29.882197) {};
    \node [circle,fill,scale=0.25] (59) at (0.000002,50.000000) {};
    \node [circle,fill,scale=0.25] (58) at (14.393561,71.561306) {};
    \node [circle,fill,scale=0.25] (57) at (25.576571,74.904595) {};
    \node [circle,fill,scale=0.25] (56) at (28.305955,65.861954) {};
    \node [circle,fill,scale=0.25] (55) at (29.492284,46.499087) {};
    \node [circle,fill,scale=0.25] (54) at (34.386925,40.899286) {};
    \node [circle,fill,scale=0.25] (53) at (36.635140,33.806206) {};
    \node [circle,fill,scale=0.25] (52) at (52.804047,23.104365) {};
    \node [circle,fill,scale=0.25] (51) at (40.202421,13.555668) {};
    \node [circle,fill,scale=0.25] (50) at (28.364029,14.028538) {};
    \node [circle,fill,scale=0.25] (49) at (6.694872,24.995853) {};
    \node [circle,fill,scale=0.25] (48) at (6.694872,74.995851) {};
    \node [circle,fill,scale=0.25] (47) at (29.533763,83.549027) {};
    \node [circle,fill,scale=0.25] (46) at (38.136717,83.101044) {};
    \node [circle,fill,scale=0.25] (45) at (43.603781,76.547203) {};
    \node [circle,fill,scale=0.25] (44) at (33.797909,59.639953) {};
    \node [circle,fill,scale=0.25] (43) at (43.562302,46.158951) {};
    \node [circle,fill,scale=0.25] (42) at (50.199102,30.952381) {};
    \node [circle,fill,scale=0.25] (41) at (59.374480,16.343124) {};
    \node [circle,fill,scale=0.25] (40) at (50.920855,8.337482) {};
    \node [circle,fill,scale=0.25] (39) at (24.995851,6.694874) {};
    \node [circle,fill,scale=0.25] (38) at (24.995851,93.296830) {};
    \node [circle,fill,scale=0.25] (37) at (47.212542,91.811846) {};
    \node [circle,fill,scale=0.25] (36) at (57.325367,87.796580) {};
    \node [circle,fill,scale=0.25] (35) at (53.683423,80.529284) {};
    \node [circle,fill,scale=0.25] (34) at (46.648414,68.425417) {};
    \node [circle,fill,scale=0.25] (33) at (40.783141,58.437033) {};
    \node [circle,fill,scale=0.25] (32) at (52.729383,37.597478) {};
    \node [circle,fill,scale=0.25] (31) at (68.856808,15.455453) {};
    \node [circle,fill,scale=0.25] (30) at (49.999998,0.000000) {};
    \node [circle,fill,scale=0.25] (29) at (49.999998,99.999999) {};
    \node [circle,fill,scale=0.25] (28) at (70.374977,87.605774) {};
    \node [circle,fill,scale=0.25] (27) at (61.348928,76.198771) {};
    \node [circle,fill,scale=0.25] (26) at (57.541063,68.566449) {};
    \node [circle,fill,scale=0.25] (25) at (46.399534,60.992201) {};
    \node [circle,fill,scale=0.25] (24) at (57.814831,41.106686) {};
    \node [circle,fill,scale=0.25] (23) at (72.755928,23.145844) {};
    \node [circle,fill,scale=0.25] (22) at (74.995849,6.694874) {};
    \node [circle,fill,scale=0.25] (21) at (74.995849,93.296830) {};
    \node [circle,fill,scale=0.25] (20) at (71.976106,79.500579) {};
    \node [circle,fill,scale=0.25] (19) at (66.426081,65.613073) {};
    \node [circle,fill,scale=0.25] (18) at (55.458767,52.969968) {};
    \node [circle,fill,scale=0.25] (17) at (64.418449,37.771694) {};
    \node [circle,fill,scale=0.25] (16) at (69.719593,31.060229) {};
    \node [circle,fill,scale=0.25] (15) at (84.801721,26.663349) {};
    \node [circle,fill,scale=0.25] (14) at (93.296827,24.995853) {};
    \node [circle,fill,scale=0.25] (13) at (93.296827,74.995851) {};
    \node [circle,fill,scale=0.25] (12) at (76.771193,72.565122) {};
    \node [circle,fill,scale=0.25] (11) at (64.078312,58.013937) {};
    \node [circle,fill,scale=0.25] (10) at (69.960177,47.171064) {};
    \node [circle,fill,scale=0.25] (9) at (78.239585,37.838062) {};
    \node [circle,fill,scale=0.25] (8) at (86.295004,37.365190) {};
    \node [circle,fill,scale=0.25] (7) at (99.999997,50.000000) {};
    \node [circle,fill,scale=0.25] (6) at (85.822130,70.623858) {};
    \node [circle,fill,scale=0.25] (5) at (71.818481,56.329848) {};
    \node [circle,fill,scale=0.25] (4) at (78.463577,46.714783) {};
    \node [circle,fill,scale=0.25] (3) at (92.749292,48.788784) {};
    \node [circle,fill,scale=0.25] (2) at (82.993193,61.846689) {};
    \node [circle,fill,scale=0.25] (1) at (86.560476,54.239256) {};
    \draw [black] (68) to (67);
    \draw [black] (68) to (64);
    \draw [black] (68) to (60);
    \draw [black] (67) to (59);
    \draw [black] (67) to (66);
    \draw [black] (66) to (58);
    \draw [black] (66) to (65);
    \draw [black] (65) to (56);
    \draw [black] (65) to (64);
    \draw [black] (64) to (55);
    \draw [black] (63) to (62);
    \draw [black] (63) to (61);
    \draw [black] (63) to (53);
    \draw [black] (62) to (52);
    \draw [black] (62) to (51);
    \draw [black] (61) to (60);
    \draw [black] (61) to (50);
    \draw [black] (60) to (49);
    \draw [black] (59) to (48);
    \draw [black] (59) to (49);
    \draw [black] (58) to (48);
    \draw [black] (58) to (57);
    \draw [black] (57) to (47);
    \draw [black] (57) to (56);
    \draw [black] (56) to (44);
    \draw [black] (55) to (54);
    \draw [black] (55) to (44);
    \draw [black] (54) to (43);
    \draw [black] (54) to (53);
    \draw [black] (53) to (42);
    \draw [black] (52) to (41);
    \draw [black] (52) to (42);
    \draw [black] (51) to (50);
    \draw [black] (51) to (40);
    \draw [black] (50) to (39);
    \draw [black] (49) to (39);
    \draw [black] (48) to (38);
    \draw [black] (47) to (38);
    \draw [black] (47) to (46);
    \draw [black] (46) to (37);
    \draw [black] (46) to (45);
    \draw [black] (45) to (34);
    \draw [black] (45) to (35);
    \draw [black] (44) to (33);
    \draw [black] (43) to (32);
    \draw [black] (43) to (33);
    \draw [black] (42) to (32);
    \draw [black] (41) to (31);
    \draw [black] (41) to (40);
    \draw [black] (40) to (30);
    \draw [black] (39) to (30);
    \draw [black] (38) to (29);
    \draw [black] (37) to (29);
    \draw [black] (37) to (36);
    \draw [black] (36) to (28);
    \draw [black] (36) to (35);
    \draw [black] (35) to (27);
    \draw [black] (34) to (25);
    \draw [black] (34) to (26);
    \draw [black] (33) to (25);
    \draw [black] (32) to (24);
    \draw [black] (31) to (23);
    \draw [black] (31) to (22);
    \draw [black] (30) to (22);
    \draw [black] (29) to (21);
    \draw [black] (28) to (20);
    \draw [black] (28) to (21);
    \draw [black] (27) to (26);
    \draw [black] (27) to (20);
    \draw [black] (26) to (19);
    \draw [black] (25) to (18);
    \draw [black] (24) to (17);
    \draw [black] (24) to (18);
    \draw [black] (23) to (15);
    \draw [black] (23) to (16);
    \draw [black] (22) to (14);
    \draw [black] (21) to (13);
    \draw [black] (20) to (12);
    \draw [black] (19) to (11);
    \draw [black] (19) to (12);
    \draw [black] (18) to (11);
    \draw [black] (17) to (10);
    \draw [black] (17) to (16);
    \draw [black] (16) to (9);
    \draw [black] (15) to (8);
    \draw [black] (15) to (14);
    \draw [black] (14) to (7);
    \draw [black] (13) to (6);
    \draw [black] (13) to (7);
    \draw [black] (12) to (6);
    \draw [black] (11) to (5);
    \draw [black] (10) to (4);
    \draw [black] (10) to (5);
    \draw [black] (9) to (8);
    \draw [black] (9) to (4);
    \draw [black] (8) to (3);
    \draw [black] (7) to (3);
    \draw [black] (6) to (2);
    \draw [black] (5) to (2);
    \draw [black] (4) to (1);
    \draw [black] (3) to (1);
    \draw [black] (2) to (1);
\end{tikzpicture}
\quad
\begin{tikzpicture}[scale=0.045]
    \definecolor{marked}{rgb}{0.25,0.5,0.25}
    \node [circle,fill,scale=0.25] (70) at (29.478908,45.798180) {};
    \node [circle,fill,scale=0.25] (69) at (42.241522,47.890818) {};
    \node [circle,fill,scale=0.25] (68) at (35.632754,41.546732) {};
    \node [circle,fill,scale=0.25] (67) at (22.133994,43.928866) {};
    \node [circle,fill,scale=0.25] (66) at (19.412737,55.392886) {};
    \node [circle,fill,scale=0.25] (65) at (30.148882,59.404466) {};
    \node [circle,fill,scale=0.25] (64) at (40.024813,59.487179) {};
    \node [circle,fill,scale=0.25] (63) at (51.811414,45.748552) {};
    \node [circle,fill,scale=0.25] (62) at (38.643506,34.623655) {};
    \node [circle,fill,scale=0.25] (61) at (33.763441,28.172043) {};
    \node [circle,fill,scale=0.25] (60) at (15.996692,38.420182) {};
    \node [circle,fill,scale=0.25] (59) at (7.973530,47.849462) {};
    \node [circle,fill,scale=0.25] (58) at (12.481388,58.196856) {};
    \node [circle,fill,scale=0.25] (57) at (24.400330,64.557485) {};
    \node [circle,fill,scale=0.25] (56) at (34.532671,66.765922) {};
    \node [circle,fill,scale=0.25] (55) at (44.400330,65.252274) {};
    \node [circle,fill,scale=0.25] (54) at (50.752688,61.985111) {};
    \node [circle,fill,scale=0.25] (53) at (54.822167,54.516128) {};
    \node [circle,fill,scale=0.25] (52) at (57.626137,37.998345) {};
    \node [circle,fill,scale=0.25] (51) at (52.125723,32.349047) {};
    \node [circle,fill,scale=0.25] (50) at (43.052109,21.852769) {};
    \node [circle,fill,scale=0.25] (49) at (24.946235,24.789080) {};
    \node [circle,fill,scale=0.25] (48) at (15.822991,29.139784) {};
    \node [circle,fill,scale=0.25] (47) at (0.000000,49.999999) {};
    \node [circle,fill,scale=0.25] (46) at (12.779157,70.636889) {};
    \node [circle,fill,scale=0.25] (45) at (22.092637,74.921423) {};
    \node [circle,fill,scale=0.25] (44) at (34.433416,80.421836) {};
    \node [circle,fill,scale=0.25] (43) at (44.929694,76.583954) {};
    \node [circle,fill,scale=0.25] (42) at (52.365591,78.858561) {};
    \node [circle,fill,scale=0.25] (41) at (55.277088,70.959470) {};
    \node [circle,fill,scale=0.25] (40) at (62.018197,60.860214) {};
    \node [circle,fill,scale=0.25] (39) at (64.201819,43.325061) {};
    \node [circle,fill,scale=0.25] (38) at (54.309346,24.226633) {};
    \node [circle,fill,scale=0.25] (37) at (40.959471,13.813066) {};
    \node [circle,fill,scale=0.25] (36) at (28.982630,13.945409) {};
    \node [circle,fill,scale=0.25] (35) at (6.699752,24.995863) {};
    \node [circle,fill,scale=0.25] (34) at (6.699752,74.995865) {};
    \node [circle,fill,scale=0.25] (33) at (28.047972,84.995866) {};
    \node [circle,fill,scale=0.25] (32) at (41.521918,85.103393) {};
    \node [circle,fill,scale=0.25] (31) at (56.923077,86.997517) {};
    \node [circle,fill,scale=0.25] (30) at (62.233250,71.687345) {};
    \node [circle,fill,scale=0.25] (29) at (66.989246,58.643506) {};
    \node [circle,fill,scale=0.25] (28) at (69.048800,41.976840) {};
    \node [circle,fill,scale=0.25] (27) at (71.464020,33.085193) {};
    \node [circle,fill,scale=0.25] (26) at (60.148883,17.154673) {};
    \node [circle,fill,scale=0.25] (25) at (51.381306,8.676591) {};
    \node [circle,fill,scale=0.25] (24) at (25.004134,6.691481) {};
    \node [circle,fill,scale=0.25] (23) at (25.004134,93.300248) {};
    \node [circle,fill,scale=0.25] (22) at (49.429280,92.688172) {};
    \node [circle,fill,scale=0.25] (21) at (65.161291,82.357320) {};
    \node [circle,fill,scale=0.25] (20) at (73.928868,72.382135) {};
    \node [circle,fill,scale=0.25] (19) at (75.740281,54.408601) {};
    \node [circle,fill,scale=0.25] (18) at (81.290323,51.042183) {};
    \node [circle,fill,scale=0.25] (17) at (79.751862,40.074441) {};
    \node [circle,fill,scale=0.25] (16) at (73.490487,24.086019) {};
    \node [circle,fill,scale=0.25] (15) at (69.230769,15.947064) {};
    \node [circle,fill,scale=0.25] (14) at (50.000000,0.000000) {};
    \node [circle,fill,scale=0.25] (13) at (50.000000,99.999999) {};
    \node [circle,fill,scale=0.25] (12) at (72.249794,84.301076) {};
    \node [circle,fill,scale=0.25] (11) at (78.800663,68.114144) {};
    \node [circle,fill,scale=0.25] (10) at (87.890820,57.733663) {};
    \node [circle,fill,scale=0.25] (9) at (86.765922,38.039702) {};
    \node [circle,fill,scale=0.25] (8) at (84.954509,26.947890) {};
    \node [circle,fill,scale=0.25] (7) at (75.004136,6.691481) {};
    \node [circle,fill,scale=0.25] (6) at (75.004136,93.300248) {};
    \node [circle,fill,scale=0.25] (5) at (86.691482,70.339122) {};
    \node [circle,fill,scale=0.25] (4) at (93.085194,48.784118) {};
    \node [circle,fill,scale=0.25] (3) at (93.308518,24.995863) {};
    \node [circle,fill,scale=0.25] (2) at (93.308518,74.995865) {};
    \node [circle,fill,scale=0.25] (1) at (99.999999,49.999999) {};
    \draw [black] (70) to (67);
    \draw [black] (70) to (65);
    \draw [black] (70) to (68);
    \draw [black] (69) to (68);
    \draw [black] (69) to (64);
    \draw [black] (69) to (63);
    \draw [black] (68) to (62);
    \draw [black] (67) to (60);
    \draw [black] (67) to (66);
    \draw [black] (66) to (58);
    \draw [black] (66) to (57);
    \draw [black] (65) to (56);
    \draw [black] (65) to (57);
    \draw [black] (64) to (55);
    \draw [black] (64) to (56);
    \draw [black] (63) to (52);
    \draw [black] (63) to (53);
    \draw [black] (62) to (51);
    \draw [black] (62) to (61);
    \draw [black] (61) to (49);
    \draw [black] (61) to (50);
    \draw [black] (60) to (59);
    \draw [black] (60) to (48);
    \draw [black] (59) to (47);
    \draw [black] (59) to (58);
    \draw [black] (58) to (46);
    \draw [black] (57) to (45);
    \draw [black] (56) to (44);
    \draw [black] (55) to (43);
    \draw [black] (55) to (54);
    \draw [black] (54) to (53);
    \draw [black] (54) to (41);
    \draw [black] (53) to (40);
    \draw [black] (52) to (51);
    \draw [black] (52) to (39);
    \draw [black] (51) to (38);
    \draw [black] (50) to (37);
    \draw [black] (50) to (38);
    \draw [black] (49) to (48);
    \draw [black] (49) to (36);
    \draw [black] (48) to (35);
    \draw [black] (47) to (34);
    \draw [black] (47) to (35);
    \draw [black] (46) to (34);
    \draw [black] (46) to (45);
    \draw [black] (45) to (33);
    \draw [black] (44) to (32);
    \draw [black] (44) to (33);
    \draw [black] (43) to (32);
    \draw [black] (43) to (42);
    \draw [black] (42) to (31);
    \draw [black] (42) to (41);
    \draw [black] (41) to (30);
    \draw [black] (40) to (29);
    \draw [black] (40) to (30);
    \draw [black] (39) to (28);
    \draw [black] (39) to (29);
    \draw [black] (38) to (26);
    \draw [black] (37) to (36);
    \draw [black] (37) to (25);
    \draw [black] (36) to (24);
    \draw [black] (35) to (24);
    \draw [black] (34) to (23);
    \draw [black] (33) to (23);
    \draw [black] (32) to (22);
    \draw [black] (31) to (21);
    \draw [black] (31) to (22);
    \draw [black] (30) to (21);
    \draw [black] (29) to (20);
    \draw [black] (28) to (19);
    \draw [black] (28) to (27);
    \draw [black] (27) to (16);
    \draw [black] (27) to (17);
    \draw [black] (26) to (25);
    \draw [black] (26) to (15);
    \draw [black] (25) to (14);
    \draw [black] (24) to (14);
    \draw [black] (23) to (13);
    \draw [black] (22) to (13);
    \draw [black] (21) to (12);
    \draw [black] (20) to (11);
    \draw [black] (20) to (12);
    \draw [black] (19) to (18);
    \draw [black] (19) to (11);
    \draw [black] (18) to (10);
    \draw [black] (18) to (17);
    \draw [black] (17) to (9);
    \draw [black] (16) to (8);
    \draw [black] (16) to (15);
    \draw [black] (15) to (7);
    \draw [black] (14) to (7);
    \draw [black] (13) to (6);
    \draw [black] (12) to (6);
    \draw [black] (11) to (5);
    \draw [black] (10) to (4);
    \draw [black] (10) to (5);
    \draw [black] (9) to (8);
    \draw [black] (9) to (4);
    \draw [black] (8) to (3);
    \draw [black] (7) to (3);
    \draw [black] (6) to (2);
    \draw [black] (5) to (2);
    \draw [black] (4) to (1);
    \draw [black] (3) to (1);
    \draw [black] (2) to (1);
\end{tikzpicture}
\quad
\begin{tikzpicture}[scale=0.045]
    \definecolor{marked}{rgb}{0.25,0.5,0.25}
    \node [circle,fill,scale=0.25] (74) at (17.031211,70.787251) {};
    \node [circle,fill,scale=0.25] (73) at (8.798233,52.813958) {};
    \node [circle,fill,scale=0.25] (72) at (6.701075,74.997949) {};
    \node [circle,fill,scale=0.25] (71) at (28.491849,85.663961) {};
    \node [circle,fill,scale=0.25] (70) at (40.255591,86.696155) {};
    \node [circle,fill,scale=0.25] (69) at (44.138609,79.544521) {};
    \node [circle,fill,scale=0.25] (68) at (26.370115,75.342015) {};
    \node [circle,fill,scale=0.25] (67) at (18.055216,62.718111) {};
    \node [circle,fill,scale=0.25] (66) at (13.418530,43.516015) {};
    \node [circle,fill,scale=0.25] (65) at (13.393956,30.777422) {};
    \node [circle,fill,scale=0.25] (64) at (0.000000,49.995903) {};
    \node [circle,fill,scale=0.25] (63) at (25.002050,93.298925) {};
    \node [circle,fill,scale=0.25] (62) at (50.135168,92.610795) {};
    \node [circle,fill,scale=0.25] (61) at (59.990169,86.556893) {};
    \node [circle,fill,scale=0.25] (60) at (55.795855,79.446217) {};
    \node [circle,fill,scale=0.25] (59) at (37.871713,72.622264) {};
    \node [circle,fill,scale=0.25] (58) at (25.878595,58.982550) {};
    \node [circle,fill,scale=0.25] (57) at (21.913657,48.578683) {};
    \node [circle,fill,scale=0.25] (56) at (22.462522,31.670353) {};
    \node [circle,fill,scale=0.25] (55) at (6.701075,25.002048) {};
    \node [circle,fill,scale=0.25] (54) at (50.004096,99.999997) {};
    \node [circle,fill,scale=0.25] (53) at (71.770295,85.303511) {};
    \node [circle,fill,scale=0.25] (52) at (61.546654,72.409271) {};
    \node [circle,fill,scale=0.25] (51) at (49.668223,68.796590) {};
    \node [circle,fill,scale=0.25] (50) at (36.479070,60.481689) {};
    \node [circle,fill,scale=0.25] (49) at (27.287622,42.811501) {};
    \node [circle,fill,scale=0.25] (48) at (25.976900,25.690178) {};
    \node [circle,fill,scale=0.25] (47) at (25.002050,6.701074) {};
    \node [circle,fill,scale=0.25] (46) at (74.997951,93.298925) {};
    \node [circle,fill,scale=0.25] (45) at (73.146554,74.784957) {};
    \node [circle,fill,scale=0.25] (44) at (48.005243,64.356516) {};
    \node [circle,fill,scale=0.25] (43) at (36.290653,50.520192) {};
    \node [circle,fill,scale=0.25] (42) at (32.284756,29.679692) {};
    \node [circle,fill,scale=0.25] (41) at (27.500615,14.671910) {};
    \node [circle,fill,scale=0.25] (40) at (50.004096,0.000002) {};
    \node [circle,fill,scale=0.25] (39) at (74.997951,6.701074) {};
    \node [circle,fill,scale=0.25] (38) at (93.298927,74.997949) {};
    \node [circle,fill,scale=0.25] (37) at (82.772178,70.082738) {};
    \node [circle,fill,scale=0.25] (36) at (57.278611,60.776602) {};
    \node [circle,fill,scale=0.25] (35) at (44.441714,51.224706) {};
    \node [circle,fill,scale=0.25] (34) at (37.929058,36.954206) {};
    \node [circle,fill,scale=0.25] (33) at (37.994594,25.485377) {};
    \node [circle,fill,scale=0.25] (32) at (38.027362,14.524453) {};
    \node [circle,fill,scale=0.25] (31) at (50.544769,6.856722) {};
    \node [circle,fill,scale=0.25] (30) at (62.972065,12.918816) {};
    \node [circle,fill,scale=0.25] (29) at (73.171130,13.508642) {};
    \node [circle,fill,scale=0.25] (28) at (93.298927,25.002048) {};
    \node [circle,fill,scale=0.25] (27) at (99.999999,49.995903) {};
    \node [circle,fill,scale=0.25] (26) at (80.650446,61.694108) {};
    \node [circle,fill,scale=0.25] (25) at (70.410420,59.416726) {};
    \node [circle,fill,scale=0.25] (24) at (48.636029,48.873596) {};
    \node [circle,fill,scale=0.25] (23) at (44.425330,43.286638) {};
    \node [circle,fill,scale=0.25] (22) at (45.424756,35.741787) {};
    \node [circle,fill,scale=0.25] (21) at (45.580405,30.515278) {};
    \node [circle,fill,scale=0.25] (20) at (42.106988,20.684853) {};
    \node [circle,fill,scale=0.25] (19) at (46.882936,12.648479) {};
    \node [circle,fill,scale=0.25] (18) at (65.683624,21.725239) {};
    \node [circle,fill,scale=0.25] (17) at (79.495371,24.535103) {};
    \node [circle,fill,scale=0.25] (16) at (88.170719,31.785041) {};
    \node [circle,fill,scale=0.25] (15) at (89.776356,53.280903) {};
    \node [circle,fill,scale=0.25] (14) at (66.060457,53.797001) {};
    \node [circle,fill,scale=0.25] (13) at (56.238224,44.933234) {};
    \node [circle,fill,scale=0.25] (12) at (54.624396,40.018022) {};
    \node [circle,fill,scale=0.25] (11) at (54.468748,31.752273) {};
    \node [circle,fill,scale=0.25] (10) at (53.371017,26.280004) {};
    \node [circle,fill,scale=0.25] (9) at (58.179733,21.209141) {};
    \node [circle,fill,scale=0.25] (8) at (74.899646,29.704269) {};
    \node [circle,fill,scale=0.25] (7) at (85.991643,44.269681) {};
    \node [circle,fill,scale=0.25] (6) at (68.460718,49.430654) {};
    \node [circle,fill,scale=0.25] (5) at (61.300892,38.731874) {};
    \node [circle,fill,scale=0.25] (4) at (61.907102,28.876874) {};
    \node [circle,fill,scale=0.25] (3) at (78.528712,41.050216) {};
    \node [circle,fill,scale=0.25] (2) at (70.647989,44.720242) {};
    \node [circle,fill,scale=0.25] (1) at (72.327353,39.936102) {};
    \draw [black] (74) to (67);
    \draw [black] (74) to (72);
    \draw [black] (74) to (68);
    \draw [black] (73) to (64);
    \draw [black] (73) to (67);
    \draw [black] (73) to (66);
    \draw [black] (72) to (63);
    \draw [black] (72) to (64);
    \draw [black] (71) to (63);
    \draw [black] (71) to (70);
    \draw [black] (71) to (68);
    \draw [black] (70) to (62);
    \draw [black] (70) to (69);
    \draw [black] (69) to (59);
    \draw [black] (69) to (60);
    \draw [black] (68) to (59);
    \draw [black] (67) to (58);
    \draw [black] (66) to (57);
    \draw [black] (66) to (65);
    \draw [black] (65) to (55);
    \draw [black] (65) to (56);
    \draw [black] (64) to (55);
    \draw [black] (63) to (54);
    \draw [black] (62) to (54);
    \draw [black] (62) to (61);
    \draw [black] (61) to (60);
    \draw [black] (61) to (53);
    \draw [black] (60) to (52);
    \draw [black] (59) to (51);
    \draw [black] (58) to (50);
    \draw [black] (58) to (57);
    \draw [black] (57) to (49);
    \draw [black] (56) to (48);
    \draw [black] (56) to (49);
    \draw [black] (55) to (47);
    \draw [black] (54) to (46);
    \draw [black] (53) to (45);
    \draw [black] (53) to (46);
    \draw [black] (52) to (51);
    \draw [black] (52) to (45);
    \draw [black] (51) to (44);
    \draw [black] (50) to (44);
    \draw [black] (50) to (43);
    \draw [black] (49) to (43);
    \draw [black] (48) to (41);
    \draw [black] (48) to (42);
    \draw [black] (47) to (40);
    \draw [black] (47) to (41);
    \draw [black] (46) to (38);
    \draw [black] (45) to (37);
    \draw [black] (44) to (36);
    \draw [black] (43) to (35);
    \draw [black] (42) to (33);
    \draw [black] (42) to (34);
    \draw [black] (41) to (32);
    \draw [black] (40) to (31);
    \draw [black] (40) to (39);
    \draw [black] (39) to (29);
    \draw [black] (39) to (28);
    \draw [black] (38) to (37);
    \draw [black] (38) to (27);
    \draw [black] (37) to (26);
    \draw [black] (36) to (25);
    \draw [black] (36) to (35);
    \draw [black] (35) to (24);
    \draw [black] (34) to (22);
    \draw [black] (34) to (23);
    \draw [black] (33) to (20);
    \draw [black] (33) to (21);
    \draw [black] (32) to (19);
    \draw [black] (32) to (20);
    \draw [black] (31) to (30);
    \draw [black] (31) to (19);
    \draw [black] (30) to (18);
    \draw [black] (30) to (29);
    \draw [black] (29) to (17);
    \draw [black] (28) to (27);
    \draw [black] (28) to (16);
    \draw [black] (27) to (15);
    \draw [black] (26) to (25);
    \draw [black] (26) to (15);
    \draw [black] (25) to (14);
    \draw [black] (24) to (14);
    \draw [black] (24) to (23);
    \draw [black] (23) to (13);
    \draw [black] (22) to (21);
    \draw [black] (22) to (12);
    \draw [black] (21) to (11);
    \draw [black] (20) to (10);
    \draw [black] (19) to (9);
    \draw [black] (18) to (8);
    \draw [black] (18) to (9);
    \draw [black] (17) to (8);
    \draw [black] (17) to (16);
    \draw [black] (16) to (7);
    \draw [black] (15) to (7);
    \draw [black] (14) to (6);
    \draw [black] (13) to (12);
    \draw [black] (13) to (6);
    \draw [black] (12) to (5);
    \draw [black] (11) to (10);
    \draw [black] (11) to (5);
    \draw [black] (10) to (4);
    \draw [black] (9) to (4);
    \draw [black] (8) to (3);
    \draw [black] (7) to (3);
    \draw [black] (6) to (2);
    \draw [black] (5) to (2);
    \draw [black] (4) to (1);
    \draw [black] (3) to (1);
    \draw [black] (2) to (1);
\end{tikzpicture}

\caption{All cubic planar $K_2$-hypohamiltonian graphs on 68, 70, and 74 
vertices, respectively.}
\label{fig:cubic_planar}
\end{center}
\end{figure}

\begin{corollary}
The family of cubic planar $K_2$-hypohamiltonian graphs has a Holton gap at 
$72$.
\end{corollary}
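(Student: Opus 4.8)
The plan is to read the result straight off Table~\ref{table:counts_cubic_planar} and the Observation preceding the corollary. Recall the definition given in the excerpt: for an infinite graph family $\mathcal{F}$, a \emph{Holton gap at $b$} exists precisely when there are integers $a < b < c$ such that $\mathcal{F}$ contains members of orders $a$ and $c$ but none of order $b$. So it suffices to produce one such triple with $b = 72$ for the family of cubic planar $K_2$-hypohamiltonian graphs.

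First I would take $a = 70$ and $c = 74$. By the Observation following Table~\ref{table:counts_cubic_planar}---equivalently, by the exhaustive generation with \texttt{plantri} of all cyclically $4$-edge-connected cubic planar graphs of girth $5$ up to $78$ vertices summarised there, together with the subsequent $K_2$-hypohamiltonicity test---there is exactly one cubic planar $K_2$-hypohamiltonian graph of order $70$ and exactly one of order $74$, both of which are exhibited in Figure~\ref{fig:cubic_planar}, whereas there is no cubic planar $K_2$-hypohamiltonian graph of order $72$. Here one invokes the standard structural facts already recorded in the section: a $K_2$-hypohamiltonian graph is $3$-connected and cyclically $4$-edge-connected, a cubic $K_2$-hypohamiltonian graph has girth at least $5$, and a $3$-connected planar graph has girth at most $5$, so that cubic planar $K_2$-hypohamiltonian graphs have girth exactly $5$; hence the above generation is exhaustive for this class and the nonexistence at order $72$ is genuine, not an artefact of the search scope.

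This gives the triple $(a,b,c) = (70,72,74)$ and therefore the Holton gap at $72$; if a second witness is desired, $a = 68$, $c = 74$ works equally well, since there is also exactly one cubic planar $K_2$-hypohamiltonian graph of order $68$.

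I do not anticipate any real obstacle here: the only substantive ingredient is the completeness of the computer search, which is already asserted (and, per the text, independently verified by a second implementation) earlier in Section~\ref{subsect:planar_cubic}; once that is granted, the corollary is an immediate one-line consequence of the definition of a Holton gap.
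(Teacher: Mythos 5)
Your proposal is correct and matches the paper's (implicit) argument exactly: the corollary is read directly off Table~\ref{table:counts_cubic_planar} and the preceding Observation, using the triple $(a,b,c)=(70,72,74)$ (or $a=68$), with the exhaustiveness of the \texttt{plantri} search justified by the same structural facts the paper records (cyclic $4$-edge-connectivity and girth exactly $5$ for cubic planar $K_2$-hypohamiltonian graphs). The only remark worth adding is that the definition of a Holton gap is phrased for an infinite family, and the infinitude of this family is supplied by the theorem immediately following the corollary — but the paper itself relies on that same ordering, so this is not a gap in your argument relative to the paper's.
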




Araya and the third author~\cite{AW11} discovered a cubic planar 
hypohamiltonian graph of order 70 (a smaller such graph, if it exists, must 
have at least 54~vertices~\cite{GZ17}), and it was shown subsequently that such 
graphs of order $n$ exist for every even $n \ge 74$, see~\cite{Za15}. Thus, one 
is tempted to believe that the family of cubic planar hypohamiltonian graphs 
\emph{also} has a Holton gap at $72$, but this remains unknown.


\begin{theorem}
There exist infinitely many cubic planar $K_2$-hypohamiltonian graphs.
\end{theorem}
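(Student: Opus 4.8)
The plan is to start from one of the three small cubic planar $K_2$-hypohamiltonian graphs exhibited in Figure~\ref{fig:cubic_planar} and to repeatedly apply the gluing operation of Lemma~\ref{lem:ext5cycle}. That lemma increases the order by exactly $20$ and preserves cubicity, non-hamiltonicity, $K_2$-hamiltonicity, girth~$5$, and---provided the relevant $5$-cycle is facial---planarity, while again producing a graph equipped with an extendable (facial) $5$-cycle. Hence a single suitable seed graph yields an infinite family of pairwise non-isomorphic cubic planar $K_2$-hypohamiltonian graphs, of orders $n_0, n_0+20, n_0+40, \dots$, which proves the theorem.

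First I would locate an extendable $5$-cycle that bounds a face in, say, the $68$-vertex graph of Figure~\ref{fig:cubic_planar}; call it $G_{68}$. Since $G_{68}$ is cubic of girth~$5$, every face of size~$5$ is a $5$-cycle all of whose vertices are cubic, so it suffices to scan the pentagonal faces of the displayed embedding and test each of them for extendability. Recall that for a $5$-cycle $C = v_0\ldots v_4v_0$ with third neighbours $v_i'$ this means checking, for every $i$ modulo $5$, that $G_{68}-v_i$ contains a hamiltonian cycle avoiding the (always present) edge $v_{i-2}v_{i+2}$ of $C$, and that $G_{68}-v_i'$ contains a hamiltonian cycle whose intersection with $C$ is exactly the path $v_{i-2}v_{i-1}v_iv_{i+1}v_{i+2}$. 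This is a finite verification that can be carried out by the program for detecting extendable $5$-cycles already used in the proof of Theorem~\ref{thm:cubK2hypoham}; the witnessing hamiltonian cycles would be listed explicitly and deposited in the repository~\cite{Re22}, so that the claim can also be checked by hand. Should $G_{68}$ contain no facial extendable $5$-cycle, I would instead test the $70$- and $74$-vertex graphs of Figure~\ref{fig:cubic_planar}, or a larger graph from the \verb|plantri| output; by analogy with the cubic case of Theorem~\ref{thm:cubK2hypoham} and the planar case of Theorem~\ref{thm:planar_orders}, a suitable seed is expected to exist.

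With a facial extendable $5$-cycle in the seed graph in hand, the proof is completed by a trivial induction: applying Lemma~\ref{lem:ext5cycle} $k$ times yields a cubic planar $K_2$-hypohamiltonian graph of order $n_0+20k$ carrying a facial extendable $5$-cycle, and graphs of distinct orders are non-isomorphic. The main obstacle is exactly the first step: one has to \emph{guarantee} that one of the known small cubic planar $K_2$-hypohamiltonian graphs possesses an extendable $5$-cycle that is also facial. This does not follow formally from $K_2$-hypohamiltonicity and must be established by explicit computation on the generated graphs; everything afterwards is a routine application of the machinery already built up in this section.
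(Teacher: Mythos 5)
Your proposal matches the paper's proof: the authors exhibit exactly the $68$-vertex cubic planar $K_2$-hypohamiltonian graph with an extendable (facial) $5$-cycle, certify extendability by explicitly drawn hamiltonian cycles (Figure~\ref{fig:68-ext-cyc} in the Appendix), and then iterate Lemma~\ref{lem:ext5cycle}. So your approach is essentially the same, and the verification step you flag as the only real obstacle is precisely what the paper supplies.
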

\begin{proof}
In Figure~\ref{fig:68-ext-cyc} in the Appendix we give a 68-vertex cubic 
planar $K_2$-hypohamiltonian graph $G$ containing an extendable cycle; the 
proof is contained in the aforementioned figure. By Lemma~\ref{lem:ext5cycle}, 
the proof is complete.
\end{proof}


Finally, we point out that among all cubic planar $K_2$-hypohamiltonian graphs 
that we were able to investigate, exceptional vertices always occurred (i.e.\ 
we do not know whether cubic planar hypohamiltonian $K_2$-hamiltonian graphs 
exist):



\begin{observation}
All cubic planar $K_2$-hypohamiltonian graphs up to and including $78$ vertices
have at least two exceptional vertices; there exists a cubic planar
$K_2$-hypohamiltonian graph of order~$76$ containing exactly five exceptional
vertices.
\end{observation}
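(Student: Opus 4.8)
The approach is purely computational: the statement is a finite check against the complete classification of small cubic planar $K_2$-hypohamiltonian graphs. By Table~\ref{table:counts_cubic_planar} there are exactly nine cubic planar $K_2$-hypohamiltonian graphs of order at most $78$ — one each of orders $68$, $70$, and $74$, three each of orders $76$ and $78$, and none of smaller order nor of order $72$. Recall (as noted just before the observation) that a cubic $K_2$-hypohamiltonian graph is cyclically $4$-edge-connected and of girth at least $5$, while a $3$-connected planar cubic graph has girth at most $5$; hence every such graph has girth exactly $5$, and the exhaustive generation of all cyclically $4$-edge-connected cubic planar graphs of girth $5$ up to $78$ vertices with \verb|plantri|, followed by filtering with our $K_2$-hypohamiltonicity test (whose output was independently cross-checked as described in Section~\ref{subsect:planar_cubic}), does produce the full list.

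Given this list, the proof reduces to computing, for each of the nine graphs $G$, the set ${\rm exc}(G)$ of exceptional vertices. For a fixed $G$ this is done by running the hamiltonicity subroutine from the beginning of Section~\ref{sect:orders} on $G - v$ for every $v \in V(G)$: the vertex $v$ lies in ${\rm exc}(G)$ precisely when $G - v$ is non-hamiltonian. The first assertion is then the statement that $|{\rm exc}(G)| \ge 2$ holds for all nine graphs, and the second is obtained by exhibiting, among the three graphs of order $76$, the one with $|{\rm exc}(G)| = 5$. The relevant graphs, with their exceptional vertices marked, are displayed in Figures~\ref{fig:cubic_planar} and~\ref{fig:cubic_planar_76_78} and can be downloaded from the House of Graphs~\cite{CDG23}, so a reader may redo this bounded verification by hand.

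There is no genuine obstacle here apart from trusting the exhaustiveness of the \verb|plantri| generation and the correctness of our hamiltonicity code, which is exactly why the statement is phrased as an observation rather than a theorem and why the independent re-checks of Section~\ref{subsect:planar_cubic} are carried out. We are not aware of any structural reason forcing $|{\rm exc}(G)| \ge 2$ for cubic planar $K_2$-hypohamiltonian graphs — equivalently, we do not know whether a cubic planar $K_2$-hypohamiltonian graph with at most one exceptional vertex (in particular, a cubic planar hypohamiltonian $K_2$-hamiltonian graph) can exist — so whether the first assertion persists for all orders is left open.
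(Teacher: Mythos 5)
Your proposal is correct and matches the paper's justification exactly: the observation rests on the exhaustive \verb|plantri| generation of cyclically $4$-edge-connected cubic planar graphs of girth~$5$ up to $78$ vertices, the $K_2$-hypohamiltonicity filter (independently cross-checked), and a per-vertex hamiltonicity test on each of the nine resulting graphs to determine ${\rm exc}(G)$. One small correction: Figures~\ref{fig:cubic_planar} and~\ref{fig:cubic_planar_76_78} do \emph{not} mark the exceptional vertices (unlike Figure~\ref{fig:planar_graphs}), so the bounded verification relies on the computations and the House of Graphs data rather than on annotations in those drawings.
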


\section{Concluding remarks}
\label{sect:last}

%

\noindent \textbf{1. Errata.} In the last author's paper~\cite{Za21}, the first
part of this series of articles, there is one minor issue and one more
significant issue. We now explain these omissions and present corrections.

The minor issue concerns Lemma~3 in Section~3.1 of~\cite{Za21}. Let ${\cal
H}_k$ be the set of all 2-connected $n$-vertex graphs of circumference $n - 1$
that contain exactly $k$ exceptional vertices. The aforementioned lemma states:
``Let $i, j$ be non-negative integers and consider disjoint graphs $G \in {\cal
H}_i$ and $H \in {\cal H}_j$. We require $G$ and $H$ to contain cubic vertices
$x$ and $y$, respectively, such that $N(x) \cap {\rm exc}(G)  = \emptyset$ and
$N(y) \cap {\rm exc}(H) = \emptyset$. Let $F_G$ ($F_H$) be the non-trivial
$N(x)$-fragment ($N(y)$-fragment) of $G$ ($H$). Then $(F_G,N(x)) \: (F_H,N(y))
\in {\cal H}_{i+j}$.''

The issue here is that if $x$ or $y$ are exceptional, the statement is false.
So in the lemma's second sentence, $N(x)$ must be replaced by $N[x]$, i.e.\ the
closed neighbourhood of $x$, and $N(y)$ by $N[y]$. After performing this
correction, we obtain precisely the statement of Theorem 6 from the last author's paper~\cite{Za15}.

In~\cite{Za21}, Lemma~3 is used in the proof of Theorem~1, whose statement
remains valid. The only change that must be made in the proof of Theorem~1 is
that in its last paragraph, the sentence ``By construction, $\Gamma_k$ contains
a cubic vertex $y_k$ such that no vertex of $N(y_k)$ is exceptional.'' must be
replaced with: By construction, $\Gamma_k$ contains a cubic vertex $y_k$ such
that no vertex of $N[y_k]$ is exceptional.

The major issue concerns Proposition~6 in Section~3.2 of~\cite{Za21}, which
discusses the dot product in the context of $K_2$-hamiltonicity.
Unfortunately, the third and fourth paragraph of its ``proof'' contain errors.
In order to correct these, we need to change, in the proposition's statement,
condition (v) to (v)' and add a sixth condition as described below:\\[1mm]
(v)' For any $vw \in E(H)$ with $v,w \not\in \{ x, y\}$ there exists in $H - x - y - v - w$ a Hamiltonian $st$-path with $s \in \{ a', b' \}$ and $t \in \{ c', d' \}$.\\[1mm]
(vi) $H - x - a'$, $H - x - b'$, $H - y - c'$, and $H - y - d'$ are hamiltonian.\\[1mm]
The third (and penultimate) paragraph of the proposition's ``proof'' can now be saved as we get a hamiltonian cycle in $G - a - a'$ since there now exists a hamiltonian $c'd'$-path in $H - x - y - a'$ by condition (vi); one proceeds analogously for the hamiltonicity of $G - b - b'$, $G - c - c'$, and $G - d - d'$. In the fourth paragraph of the proposition's ``proof'' it is stated that by (v) there exists in $H - v - w$ a hamiltonian $s't'$-path ${\frak p}$ with $s' \in \{ a', b' \}$ and $t' \in \{ c',d' \}$. The issue is that such a path may start in $a'$, visit $x, y, d'$ (in this order), then continue through $H$ to $b'$, and, again through $H$, end in $c'$. Such a path cannot be extended to the desired cycle by the arguments given in the ``proof''. By replacing condition (v) with (v'), such a path might still exist, but we are now guaranteed to have a path which \emph{can} be extended, as described in~\cite{Za21}, to the desired cycle.

\smallskip

\noindent \textbf{2. Bipartite graphs.} Hypohamiltonian graphs cannot be 
bipartite. However, it follows from an operation of 
Thomassen~\cite[p.~41]{Th81} that for any $\varepsilon > 0$ there exists a 
bipartite graph $H$ and a hypohamiltonian graph $G$ such that $H$ is an induced 
subgraph of $G$ and $\lvert V(H) \rvert / \lvert V(G) \rvert > 1-\varepsilon$. 
By an 
operation of the last author \cite{Za21}, this statement is also true for 
$K_2$-hypohamiltonian graphs. 
 Gr\"otschel asked whether there is a 
bipartite graph admitting no hamiltonian path, but in which every 
vertex-deleted subgraph does contain a hamiltonian path, 
see~\cite[Problem~4.56]{GGL95}. Related to this, Ozeki~\cite{Oz19} asked 
whether there is a non-hamiltonian bipartite graph with bipartition $(X,Y)$ 
such that $G - x - y$ is hamiltonian for all $x \in X$ and all $y \in Y$.

Here we address, computationally, a relaxation of this problem in tune with the
article's main theme, $K_2$-hamiltonicity. (Note that every bipartite
$K_2$-hamiltonian graph must be balanced.) We found no 
$K_2$-hypohamiltonian bipartite graphs up to and including order 16, and no
$K_2$-hypohamiltonian bipartite graphs of girth at least~6 up to and including
order 25. (None of the other graphs we had already generated in 
Table~\ref{table:counts_general} were
bipartite.) Recently, Brinkmann, McKay, and the first author~\cite{BGM22} 
showed 
that the smallest 3-connected non-hamiltonian cubic bipartite graph has 50 
vertices and they also generated a sample of 238~531 such graphs up to 64 
vertices. We verified that none of these graphs are $K_2$-hypohamiltonian. 
Among planar $3$-connected bipartite graphs there are no $K_2$-hypohamiltonian 
graphs 
up to and including $32$ vertices.
Barnette conjectured that every 3-connected cubic planar bipartite graph is 
hamiltonian and this conjecture was verified up to 90 vertices 
in~\cite{BGM22}, 
so there are no 3-connected cubic planar $K_2$-hypohamiltonian bipartite
graphs 
up to at least 90 vertices.

\smallskip



\noindent \textbf{3. Toward Gr\"unbaum's conjecture.} Going back to 
Gr\"unbaum's problem, one goal is to find $K_2$-hypohamiltonian graphs with as 
few hamiltonian vertex-deleted subgraphs as possible. Let ${\cal G}_3$ be the 
family of all cubic $K_2$-hypohamiltonian graphs and
$$\rho_3 := \sup_{G \in {\cal G}_3} \frac{|{\rm exc}(G)|}{|V(G)|}.$$
In~\cite{Za21}, the last author showed that $\rho_3 \ge \frac14$. Any improvement of this bound would be very welcome.

\smallskip

\noindent \textbf{4. Girth.} It is natural to wonder what the smallest 
$K_2$-hypohamiltonian graph of a certain girth is. For an integer~$k \ge 3$, 
let $g_k$ denote the order of a smallest $K_2$-hypohamiltonian graph of 
girth~$k$. Our computations imply that $g_3 \ge 14$ and $g_4 \ge 17$. We do not 
know whether $K_2$-hypohamiltonian graphs of girth 3 or 4 exist. The smallest 
$K_2$-hypohamiltonian graph of girth $5$ is Petersen's, so $g_5 = 10$. There 
exists a $K_2$-hypohamiltonian graph of girth $6$ and order~$25$---it is the 
smallest hypohamiltonian graph of girth~6, first described in~\cite{GZ17}---and 
none on fewer vertices, so $g_6 = 25$. We have computed that $g_k \ge 26$ for 
all $k \ge 7$, but we do not know whether $K_2$-hypohamiltonian graphs of girth 
greater than 6~exist. The smallest cubic $K_2$-hypohamiltonian graph of girth~6 
is the flower snark $J_7$. 
As mentioned in Theorem~2.9 of~\cite{GZ17}, the 28-vertex Coxeter graph is the only non-hamiltonian cubic graph with girth 7 up to at least 42 vertices (and Coxeter's graph is not $K_2$-hamiltonian), the smallest non-hamiltonian cubic graph with girth 8 has at least 50 vertices, and the smallest non-hamiltonian cubic graph with girth 9 has at least 66 vertices.

\section*{Acknowledgements}
Several of the computations for this work were carried out using the 
supercomputer infrastructure provided by the VSC (Flemish Supercomputer 
Center), funded by the Research Foundation Flanders (FWO) and the Flemish 
Government. The research of Jan Goedgebeur and Jarne Renders was supported by 
Internal Funds of KU Leuven. The research of G\'abor Wiener was supported by 
the National Research, Development and Innovation Office (NKFIH), Hungary, 
Grant no.\ K-124171 and is part of project 
no.\ BME-NVA-02, implemented with the support provided by the Ministry of 
Innovation and Technology of Hungary from the National Research, Development 
and Innovation Fund, financed under the TKP2021 funding scheme.
The research of Carol T.~Zamfirescu was supported by a Postdoctoral Fellowship of the Research Foundation Flanders (FWO).


\appendix

\section{Appendix}

\subsection{Figures used in the proof of Lemma~\ref{lem:egy}}

\begin{figure}[H]
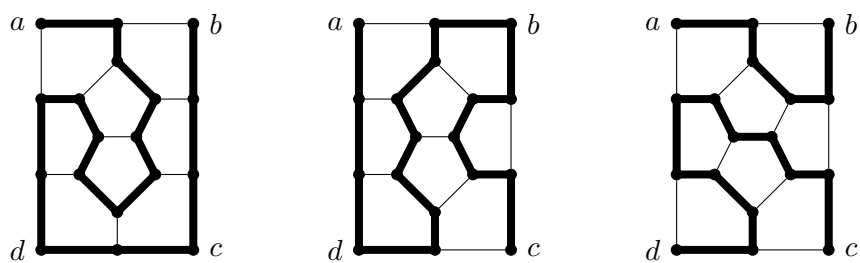

\begin{center}
\tikz[fo/.style={draw, fill=black, circle, minimum size={0.16cm}, inner sep=0cm, font=\bf, align=center, scale=0.88},
scale=0.5]
{
\node [fo, label=left:$d$] (1) at (0,0) {};
\node [fo] (2) at (2,0) {};
\node [fo, label=right:$c$] (3) at (4,0) {};
\node [fo] (4) at (0,2) {};
\node [fo] (5) at (0,4) {};
\node [fo, label=left:$a$] (6) at (0,6) {};
\node [fo] (7) at (4,2) {};
\node [fo] (8) at (4,4) {};
\node [fo, label=right:$b$] (9) at (4,6) {};
\node [fo] (10) at (2,6) {};
\node [fo] (11) at (2,1) {};
\node [fo] (12) at (2,5) {};
\node [fo] (13) at (1,2) {};
\node [fo] (14) at (1,4) {};
\node [fo] (15) at (3,2) {};
\node [fo] (16) at (3,4) {};
\node [fo] (17) at (1.5,3) {};
\node [fo] (18) at (2.5,3) {};
\draw
(1) edge node  {} (2)
    edge node {} (4)
(2) edge node {} (3)
    edge node {} (11)
(3) edge node {} (7)
(4) edge node {} (5)
    edge node {} (13)
(5) edge node {} (6)
    edge node {} (14)
(6) edge node {} (10)
(7) edge node {} (8)
    edge node {} (15)
(8) edge node {} (9)
    edge node {} (16)
(9) edge node {} (10)
(10) edge node {} (12)
(11) edge node {} (13)
     edge node {} (15)
(12) edge node {} (14)
     edge node {} (16)
(13) edge node {} (17)
(14) edge node {} (17)
(15) edge node {} (18)
(16) edge node {} (18)
(17) edge node {} (18);

\draw[line width=3pt] (6.center) -- (10.center) -- (12.center) -- (16.center) -- (18.center) -- (15.center) -- (11.center) -- (13.center) -- (17.center) -- (14.center) -- (5.center) -- (4.center) -- (1.center) -- (2.center) -- (3.center) -- (7.center) -- (8.center) -- (9.center)

;} \hskip 1cm \tikz[fo/.style={draw, fill=black, circle, minimum size={0.16cm}, inner sep=0cm, font=\bf, align=center, scale=0.88},
scale=0.5]
{
\node [fo, label=left:$d$] (1) at (0,0) {};
\node [fo] (2) at (2,0) {};
\node [fo, label=right:$c$] (3) at (4,0) {};
\node [fo] (4) at (0,2) {};
\node [fo] (5) at (0,4) {};
\node [fo, label=left:$a$] (6) at (0,6) {};
\node [fo] (7) at (4,2) {};
\node [fo] (8) at (4,4) {};
\node [fo, label=right:$b$] (9) at (4,6) {};
\node [fo] (10) at (2,6) {};
\node [fo] (11) at (2,1) {};
\node [fo] (12) at (2,5) {};
\node [fo] (13) at (1,2) {};
\node [fo] (14) at (1,4) {};
\node [fo] (15) at (3,2) {};
\node [fo] (16) at (3,4) {};
\node [fo] (17) at (1.5,3) {};
\node [fo] (18) at (2.5,3) {};
\draw
(1) edge node  {} (2)
    edge node {} (4)
(2) edge node {} (3)
    edge node {} (11)
(3) edge node {} (7)
(4) edge node {} (5)
    edge node {} (13)
(5) edge node {} (6)
    edge node {} (14)
(6) edge node {} (10)
(7) edge node {} (8)
    edge node {} (15)
(8) edge node {} (9)
    edge node {} (16)
(9) edge node {} (10)
(10) edge node {} (12)
(11) edge node {} (13)
     edge node {} (15)
(12) edge node {} (14)
     edge node {} (16)
(13) edge node {} (17)
(14) edge node {} (17)
(15) edge node {} (18)
(16) edge node {} (18)
(17) edge node {} (18);

\draw[line width=3pt] (6.center) -- (5.center) -- (4.center) -- (1.center) -- (2.center) -- (11.center) -- (13.center) -- (17.center) -- (14.center) -- (12.center) -- (10.center) -- (9.center) -- (8.center) -- (16.center) -- (18.center) -- (15.center) -- (7.center) -- (3.center)

;} \hskip 1cm \tikz[fo/.style={draw, fill=black, circle, minimum size={0.16cm}, inner sep=0cm, font=\bf, align=center, scale=0.88},
scale=0.5]
{
\node [fo, label=left:$d$] (1) at (0,0) {};
\node [fo] (2) at (2,0) {};
\node [fo, label=right:$c$] (3) at (4,0) {};
\node [fo] (4) at (0,2) {};
\node [fo] (5) at (0,4) {};
\node [fo, label=left:$a$] (6) at (0,6) {};
\node [fo] (7) at (4,2) {};
\node [fo] (8) at (4,4) {};
\node [fo, label=right:$b$] (9) at (4,6) {};
\node [fo] (10) at (2,6) {};
\node [fo] (11) at (2,1) {};
\node [fo] (12) at (2,5) {};
\node [fo] (13) at (1,2) {};
\node [fo] (14) at (1,4) {};
\node [fo] (15) at (3,2) {};
\node [fo] (16) at (3,4) {};
\node [fo] (17) at (1.5,3) {};
\node [fo] (18) at (2.5,3) {};
\draw
(1) edge node  {} (2)
    edge node {} (4)
(2) edge node {} (3)
    edge node {} (11)
(3) edge node {} (7)
(4) edge node {} (5)
    edge node {} (13)
(5) edge node {} (6)
    edge node {} (14)
(6) edge node {} (10)
(7) edge node {} (8)
    edge node {} (15)
(8) edge node {} (9)
    edge node {} (16)
(9) edge node {} (10)
(10) edge node {} (12)
(11) edge node {} (13)
     edge node {} (15)
(12) edge node {} (14)
     edge node {} (16)
(13) edge node {} (17)
(14) edge node {} (17)
(15) edge node {} (18)
(16) edge node {} (18)
(17) edge node {} (18);

\draw[line width=3pt] (6.center) -- (10.center) -- (12.center) -- (16.center) -- (8.center) -- (9.center)  (1.center) -- (2.center) -- (11.center) -- (13.center) -- (4.center) -- (5.center) -- (14.center) -- (17.center) -- (18.center) -- (15.center) -- (7.center) -- (3.center)

;}

\caption{Paths spanning $J_{18}$: a hamiltonian $ab$-path, a hamiltonian $ac$-path, and a disjoint $ab$-path and $cd$-path which together span $J_{18}$.}
\label{fig:J18spanningpaths}
\end{center}
\end{figure}

\begin{figure}[H]
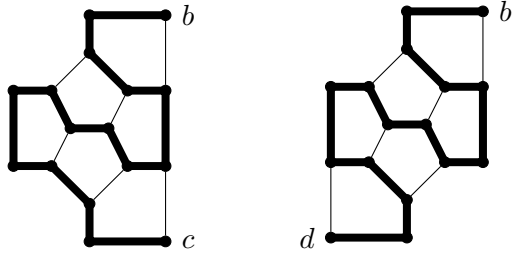

\begin{center}
\tikz[fo/.style={draw, fill=black, circle, minimum size={0.16cm}, inner sep=0cm, font=\bf, align=center, scale=0.88},
scale=0.5]
{

\node [fo] (2) at (2,0) {};
\node [fo, label=right:$c$] (3) at (4,0) {};
\node [fo] (4) at (0,2) {};
\node [fo] (5) at (0,4) {};

\node [fo] (7) at (4,2) {};
\node [fo] (8) at (4,4) {};
\node [fo, label=right:$b$] (9) at (4,6) {};
\node [fo] (10) at (2,6) {};
\node [fo] (11) at (2,1) {};
\node [fo] (12) at (2,5) {};
\node [fo] (13) at (1,2) {};
\node [fo] (14) at (1,4) {};
\node [fo] (15) at (3,2) {};
\node [fo] (16) at (3,4) {};
\node [fo] (17) at (1.5,3) {};
\node [fo] (18) at (2.5,3) {};
\draw

(2) edge node {} (3)
    edge node {} (11)
(3) edge node {} (7)
(4) edge node {} (5)
    edge node {} (13)
(5)
    edge node {} (14)

(7) edge node {} (8)
    edge node {} (15)
(8) edge node {} (9)
    edge node {} (16)
(9) edge node {} (10)
(10) edge node {} (12)
(11) edge node {} (13)
     edge node {} (15)
(12) edge node {} (14)
     edge node {} (16)
(13) edge node {} (17)
(14) edge node {} (17)
(15) edge node {} (18)
(16) edge node {} (18)
(17) edge node {} (18);

\draw[line width=3pt] (3.center) -- (2.center) -- (11.center) -- (13.center) -- (4.center) -- (5.center) -- (14.center) -- (17.center) -- (18.center) -- (15.center) -- (7.center) -- (8.center) -- (16.center) -- (12.center) -- (10.center) -- (9.center)

;} \hskip 1cm \tikz[fo/.style={draw, fill=black, circle, minimum size={0.16cm}, inner sep=0cm, font=\bf, align=center, scale=0.88},
scale=0.5]
{
\node [fo, label=left:$d$] (1) at (0,0) {};
\node [fo] (2) at (2,0) {};

\node [fo] (4) at (0,2) {};
\node [fo] (5) at (0,4) {};

\node [fo] (7) at (4,2) {};
\node [fo] (8) at (4,4) {};
\node [fo, label=right:$b$] (9) at (4,6) {};
\node [fo] (10) at (2,6) {};
\node [fo] (11) at (2,1) {};
\node [fo] (12) at (2,5) {};
\node [fo] (13) at (1,2) {};
\node [fo] (14) at (1,4) {};
\node [fo] (15) at (3,2) {};
\node [fo] (16) at (3,4) {};
\node [fo] (17) at (1.5,3) {};
\node [fo] (18) at (2.5,3) {};
\draw
(1) edge node  {} (2)
    edge node {} (4)
(2)
    edge node {} (11)

(4) edge node {} (5)
    edge node {} (13)
(5)
    edge node {} (14)

(7) edge node {} (8)
    edge node {} (15)
(8) edge node {} (9)
    edge node {} (16)
(9) edge node {} (10)
(10) edge node {} (12)
(11) edge node {} (13)
     edge node {} (15)
(12) edge node {} (14)
     edge node {} (16)
(13) edge node {} (17)
(14) edge node {} (17)
(15) edge node {} (18)
(16) edge node {} (18)
(17) edge node {} (18);

\draw[line width=3pt] (1.center) -- (2.center) -- (11.center) -- (13.center) -- (4.center) -- (5.center) -- (14.center) -- (17.center) -- (18.center) -- (15.center) -- (7.center) -- (8.center) -- (16.center) -- (12.center) -- (10.center) -- (9.center)

;}

\caption{A hamiltonian $bc$-path in $J_{18} - a - d$ and a hamiltonian $bd$-path in $J_{18} - a - c$.}
\label{fig:J18bcpath}
\end{center}
\end{figure}


\subsection{Certificates to verify the computer-aided proof of Lemma~\ref{lem:j18}}
\label{app:j18}

\begin{figure}[H]
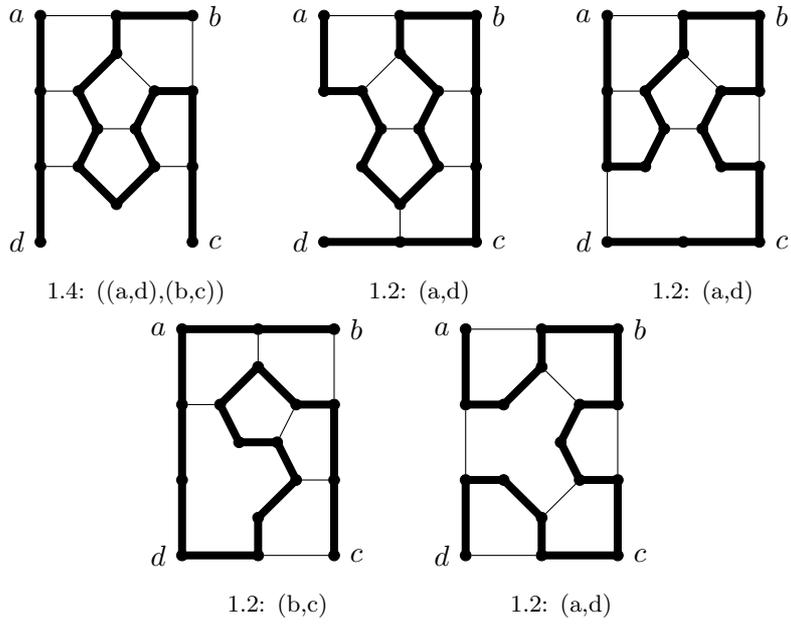

	\centering
	{\begin{subfigure}{0.24\linewidth}
\tikz[fo/.style={draw, fill=black, circle, minimum size={0.16cm}, inner 
sep=0cm, font=\bf, align=center, scale=0.88},scale=0.5] {
\node [fo, label=left:$d$] (1) at (0,0) {};
\node [fo, label=right:$c$] (3) at (4,0) {};
\node [fo] (4) at (0,2) {};
\node [fo] (5) at (0,4) {};
\node [fo, label=left:$a$] (6) at (0,6) {};
\node [fo] (7) at (4,2) {};
\node [fo] (8) at (4,4) {};
\node [fo, label=right:$b$] (9) at (4,6) {};
\node [fo] (10) at (2,6) {};
\node [fo] (11) at (2,1) {};
\node [fo] (12) at (2,5) {};
\node [fo] (13) at (1,2) {};
\node [fo] (14) at (1,4) {};
\node [fo] (15) at (3,2) {};
\node [fo] (16) at (3,4) {};
\node [fo] (17) at (1.5,3) {};
\node [fo] (18) at (2.5,3) {};

\draw

(1) edge node {} (4)

(3) edge node {} (7)

(4) edge node {} (5)
    edge node {} (13)

(5) edge node {} (6)
    edge node {} (14)
	
(6) edge node {} (10)

(7) edge node {} (8)
    edge node {} (15)
		
(8) edge node {} (9)
    edge node {} (16)
		
(9) edge node {} (10)

(10) edge node {} (12)
     		
(11) edge node {} (13)
     edge node {} (15)

(12) edge node {} (14)
     edge node {} (16)
		
(13) edge node {} (17)

(14) edge node {} (17)

(15) edge node {} (18)

(16) edge node {} (18)

(17) edge node {} (18)
;
\draw[line width = 3pt] (3.center) -- (7.center) -- (8.center) -- (16.center) 
-- (18.center) -- (15.center) -- (11.center) -- (13.center) -- (17.center) -- 
(14.center) -- (12.center) -- (10.center) -- (9.center);
;
\draw[line width = 3pt] (1.center) -- (4.center) -- (5.center) -- (6.center);
}
\subcaption*{1.4: ((a,d),(b,c))}
\end{subfigure}
\begin{subfigure}{0.24\linewidth}
\tikz[fo/.style={draw, fill=black, circle, minimum size={0.16cm}, inner 
sep=0cm, font=\bf, align=center, scale=0.88},scale=0.5] {
\node [fo, label=left:$d$] (1) at (0,0) {};
\node [fo] (2) at (2,0) {};
\node [fo, label=right:$c$] (3) at (4,0) {};
\node [fo] (5) at (0,4) {};
\node [fo, label=left:$a$] (6) at (0,6) {};
\node [fo] (7) at (4,2) {};
\node [fo] (8) at (4,4) {};
\node [fo, label=right:$b$] (9) at (4,6) {};
\node [fo] (10) at (2,6) {};
\node [fo] (11) at (2,1) {};
\node [fo] (12) at (2,5) {};
\node [fo] (13) at (1,2) {};
\node [fo] (14) at (1,4) {};
\node [fo] (15) at (3,2) {};
\node [fo] (16) at (3,4) {};
\node [fo] (17) at (1.5,3) {};
\node [fo] (18) at (2.5,3) {};

\draw

(1) edge node  {} (2)
(2) edge node {} (3)
    edge node {} (11)
		
(3) edge node {} (7)

(5) edge node {} (6)
    edge node {} (14)
	
(6) edge node {} (10)

(7) edge node {} (8)
    edge node {} (15)
		
(8) edge node {} (9)
    edge node {} (16)
		
(9) edge node {} (10)

(10) edge node {} (12)
     		
(11) edge node {} (13)
     edge node {} (15)

(12) edge node {} (14)
     edge node {} (16)
		
(13) edge node {} (17)

(14) edge node {} (17)

(15) edge node {} (18)

(16) edge node {} (18)

(17) edge node {} (18)
;
\draw[line width = 3pt] (1.center) -- (2.center) -- (3.center) -- (7.center) -- (8.center) -- (9.center) -- (10.center) -- (12.center) -- (16.center) -- (18.center) -- (15.center) -- (11.center) -- (13.center) -- (17.center) -- (14.center) -- (5.center) -- (6.center);
}
\subcaption*{1.2: (a,d)}
\end{subfigure}
\begin{subfigure}{0.24\linewidth}
\tikz[fo/.style={draw, fill=black, circle, minimum size={0.16cm}, inner 
sep=0cm, font=\bf, align=center, scale=0.88},scale=0.5] {
\node [fo, label=left:$d$] (1) at (0,0) {};
\node [fo] (2) at (2,0) {};
\node [fo, label=right:$c$] (3) at (4,0) {};
\node [fo] (4) at (0,2) {};
\node [fo] (5) at (0,4) {};
\node [fo, label=left:$a$] (6) at (0,6) {};
\node [fo] (7) at (4,2) {};
\node [fo] (8) at (4,4) {};
\node [fo, label=right:$b$] (9) at (4,6) {};
\node [fo] (10) at (2,6) {};
\node [fo] (12) at (2,5) {};
\node [fo] (13) at (1,2) {};
\node [fo] (14) at (1,4) {};
\node [fo] (15) at (3,2) {};
\node [fo] (16) at (3,4) {};
\node [fo] (17) at (1.5,3) {};
\node [fo] (18) at (2.5,3) {};

\draw

(1) edge node  {} (2)
    edge node {} (4)

(2) edge node {} (3)
(3) edge node {} (7)

(4) edge node {} (5)
    edge node {} (13)

(5) edge node {} (6)
    edge node {} (14)
	
(6) edge node {} (10)

(7) edge node {} (8)
    edge node {} (15)
		
(8) edge node {} (9)
    edge node {} (16)
		
(9) edge node {} (10)

(10) edge node {} (12)
     		
(12) edge node {} (14)
     edge node {} (16)
		
(13) edge node {} (17)

(14) edge node {} (17)

(15) edge node {} (18)

(16) edge node {} (18)

(17) edge node {} (18)
;
\draw[line width = 3pt] (1.center) -- (2.center) -- (3.center) -- (7.center) -- (15.center) -- (18.center) -- (16.center) -- (8.center) -- (9.center) -- (10.center) -- (12.center) -- (14.center) -- (17.center) -- (13.center) -- (4.center) -- (5.center) -- (6.center);
}
\subcaption*{1.2: (a,d)}
\end{subfigure}\\
\begin{subfigure}{0.24\linewidth}
\tikz[fo/.style={draw, fill=black, circle, minimum size={0.16cm}, inner 
sep=0cm, font=\bf, align=center, scale=0.88},scale=0.5] {
\node [fo, label=left:$d$] (1) at (0,0) {};
\node [fo] (2) at (2,0) {};
\node [fo, label=right:$c$] (3) at (4,0) {};
\node [fo] (4) at (0,2) {};
\node [fo] (5) at (0,4) {};
\node [fo, label=left:$a$] (6) at (0,6) {};
\node [fo] (7) at (4,2) {};
\node [fo] (8) at (4,4) {};
\node [fo, label=right:$b$] (9) at (4,6) {};
\node [fo] (10) at (2,6) {};
\node [fo] (11) at (2,1) {};
\node [fo] (12) at (2,5) {};
\node [fo] (14) at (1,4) {};
\node [fo] (15) at (3,2) {};
\node [fo] (16) at (3,4) {};
\node [fo] (17) at (1.5,3) {};
\node [fo] (18) at (2.5,3) {};

\draw

(1) edge node  {} (2)
    edge node {} (4)

(2) edge node {} (3)
    edge node {} (11)
		
(3) edge node {} (7)

(4) edge node {} (5)
(5) edge node {} (6)
    edge node {} (14)
	
(6) edge node {} (10)

(7) edge node {} (8)
    edge node {} (15)
		
(8) edge node {} (9)
    edge node {} (16)
		
(9) edge node {} (10)

(10) edge node {} (12)
     		
(11) edge node {} (15)

(12) edge node {} (14)
     edge node {} (16)
		
(14) edge node {} (17)

(15) edge node {} (18)

(16) edge node {} (18)

(17) edge node {} (18)
;
\draw[line width = 3pt] (3.center) -- (7.center) -- (8.center) -- (16.center) -- (12.center) -- (14.center) -- (17.center) -- (18.center) -- (15.center) -- (11.center) -- (2.center) -- (1.center) -- (4.center) -- (5.center) -- (6.center) -- (10.center) -- (9.center);
}
\subcaption*{1.2: (b,c)}
\end{subfigure}
\begin{subfigure}{0.24\linewidth}
\tikz[fo/.style={draw, fill=black, circle, minimum size={0.16cm}, inner 
sep=0cm, font=\bf, align=center, scale=0.88},scale=0.5] {
\node [fo, label=left:$d$] (1) at (0,0) {};
\node [fo] (2) at (2,0) {};
\node [fo, label=right:$c$] (3) at (4,0) {};
\node [fo] (4) at (0,2) {};
\node [fo] (5) at (0,4) {};
\node [fo, label=left:$a$] (6) at (0,6) {};
\node [fo] (7) at (4,2) {};
\node [fo] (8) at (4,4) {};
\node [fo, label=right:$b$] (9) at (4,6) {};
\node [fo] (10) at (2,6) {};
\node [fo] (11) at (2,1) {};
\node [fo] (12) at (2,5) {};
\node [fo] (13) at (1,2) {};
\node [fo] (14) at (1,4) {};
\node [fo] (15) at (3,2) {};
\node [fo] (16) at (3,4) {};
\node [fo] (18) at (2.5,3) {};

\draw

(1) edge node  {} (2)
    edge node {} (4)

(2) edge node {} (3)
    edge node {} (11)
		
(3) edge node {} (7)

(4) edge node {} (5)
    edge node {} (13)

(5) edge node {} (6)
    edge node {} (14)
	
(6) edge node {} (10)

(7) edge node {} (8)
    edge node {} (15)
		
(8) edge node {} (9)
    edge node {} (16)
		
(9) edge node {} (10)

(10) edge node {} (12)
     		
(11) edge node {} (13)
     edge node {} (15)

(12) edge node {} (14)
     edge node {} (16)
		
(15) edge node {} (18)

(16) edge node {} (18)

;
\draw[line width = 3pt] (1.center) -- (4.center) -- (13.center) -- (11.center) -- (2.center) -- (3.center) -- (7.center) -- (15.center) -- (18.center) -- (16.center) -- (8.center) -- (9.center) -- (10.center) -- (12.center) -- (14.center) -- (5.center) -- (6.center);
}
\subcaption*{1.2: (a,d)}
\end{subfigure}

}
	\caption{Proof that $J_{18}$ is a $K_1$-cell.}
	\label{fig:j18k1goodpairs}
\end{figure}

\begin{figure}[H]
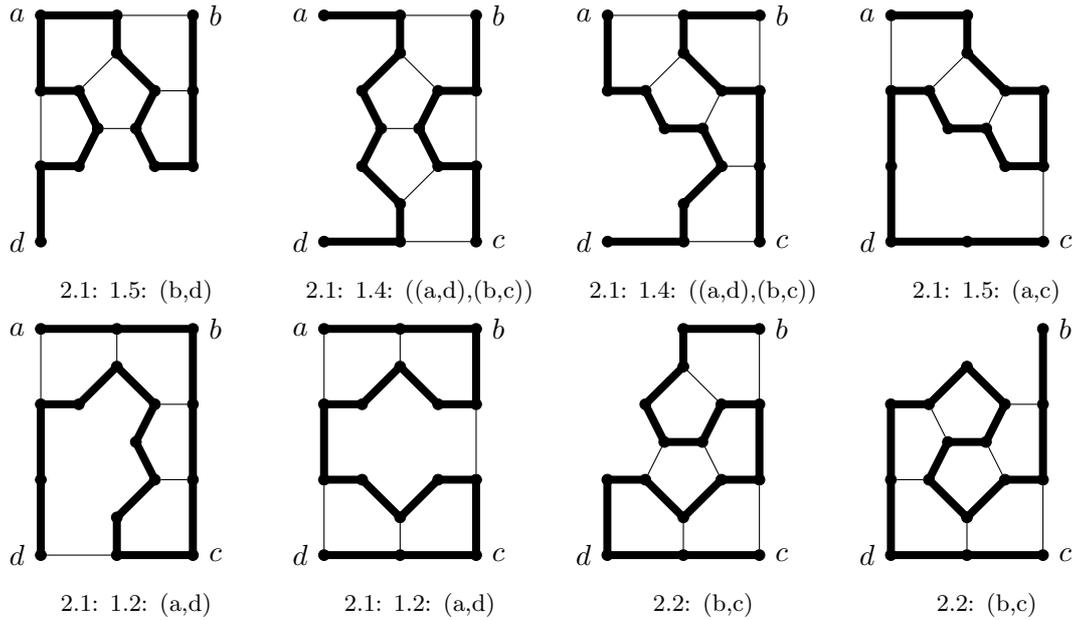

		\centering
\begin{subfigure}{0.24\linewidth}
\tikz[fo/.style={draw, fill=black, circle, minimum size={0.16cm}, inner 
sep=0cm, font=\bf, align=center, scale=0.88},scale=0.5] {
\node [fo, label=left:$d$] (1) at (0,0) {};
\node [fo] (4) at (0,2) {};
\node [fo] (5) at (0,4) {};
\node [fo, label=left:$a$] (6) at (0,6) {};
\node [fo] (7) at (4,2) {};
\node [fo] (8) at (4,4) {};
\node [fo, label=right:$b$] (9) at (4,6) {};
\node [fo] (10) at (2,6) {};
\node [fo] (12) at (2,5) {};
\node [fo] (13) at (1,2) {};
\node [fo] (14) at (1,4) {};
\node [fo] (15) at (3,2) {};
\node [fo] (16) at (3,4) {};
\node [fo] (17) at (1.5,3) {};
\node [fo] (18) at (2.5,3) {};

\draw

(1) edge node {} (4)

(4) edge node {} (5)
    edge node {} (13)

(5) edge node {} (6)
    edge node {} (14)
	
(6) edge node {} (10)

(7) edge node {} (8)
    edge node {} (15)
		
(8) edge node {} (9)
    edge node {} (16)
		
(9) edge node {} (10)

(10) edge node {} (12)
     		
(12) edge node {} (14)
     edge node {} (16)
		
(13) edge node {} (17)

(14) edge node {} (17)

(15) edge node {} (18)

(16) edge node {} (18)

(17) edge node {} (18)
;
\draw[line width = 3pt] (1.center) -- (4.center) -- (13.center) -- (17.center) -- (14.center) -- (5.center) -- (6.center) -- (10.center) -- (12.center) -- (16.center) -- (18.center) -- (15.center) -- (7.center) -- (8.center) -- (9.center);
}
\subcaption*{2.1: 1.5: (b,d)}
\end{subfigure}
\begin{subfigure}{0.24\linewidth}
\tikz[fo/.style={draw, fill=black, circle, minimum size={0.16cm}, inner 
sep=0cm, font=\bf, align=center, scale=0.88},scale=0.5] {
\node [fo, label=left:$d$] (1) at (0,0) {};
\node [fo] (2) at (2,0) {};
\node [fo, label=right:$c$] (3) at (4,0) {};
\node [fo, label=left:$a$] (6) at (0,6) {};
\node [fo] (7) at (4,2) {};
\node [fo] (8) at (4,4) {};
\node [fo, label=right:$b$] (9) at (4,6) {};
\node [fo] (10) at (2,6) {};
\node [fo] (11) at (2,1) {};
\node [fo] (12) at (2,5) {};
\node [fo] (13) at (1,2) {};
\node [fo] (14) at (1,4) {};
\node [fo] (15) at (3,2) {};
\node [fo] (16) at (3,4) {};
\node [fo] (17) at (1.5,3) {};
\node [fo] (18) at (2.5,3) {};

\draw

(1) edge node  {} (2)
(2) edge node {} (3)
    edge node {} (11)
		
(3) edge node {} (7)

(6) edge node {} (10)

(7) edge node {} (8)
    edge node {} (15)
		
(8) edge node {} (9)
    edge node {} (16)
		
(9) edge node {} (10)

(10) edge node {} (12)
     		
(11) edge node {} (13)
     edge node {} (15)

(12) edge node {} (14)
     edge node {} (16)
		
(13) edge node {} (17)

(14) edge node {} (17)

(15) edge node {} (18)

(16) edge node {} (18)

(17) edge node {} (18)
;
\draw[line width = 3pt] (3.center) -- (7.center) -- (15.center) -- (18.center) -- (16.center) -- (8.center) -- (9.center);
;
\draw[line width = 3pt] (1.center) -- (2.center) -- (11.center) -- (13.center) -- (17.center) -- (14.center) -- (12.center) -- (10.center) -- (6.center);
}
\subcaption*{2.1: 1.4: ((a,d),(b,c))}
\end{subfigure}
\begin{subfigure}{0.24\linewidth}
\tikz[fo/.style={draw, fill=black, circle, minimum size={0.16cm}, inner 
sep=0cm, font=\bf, align=center, scale=0.88},scale=0.5] {
\node [fo, label=left:$d$] (1) at (0,0) {};
\node [fo] (2) at (2,0) {};
\node [fo, label=right:$c$] (3) at (4,0) {};
\node [fo] (5) at (0,4) {};
\node [fo, label=left:$a$] (6) at (0,6) {};
\node [fo] (7) at (4,2) {};
\node [fo] (8) at (4,4) {};
\node [fo, label=right:$b$] (9) at (4,6) {};
\node [fo] (10) at (2,6) {};
\node [fo] (11) at (2,1) {};
\node [fo] (12) at (2,5) {};
\node [fo] (14) at (1,4) {};
\node [fo] (15) at (3,2) {};
\node [fo] (16) at (3,4) {};
\node [fo] (17) at (1.5,3) {};
\node [fo] (18) at (2.5,3) {};

\draw

(1) edge node  {} (2)
(2) edge node {} (3)
    edge node {} (11)
		
(3) edge node {} (7)

(5) edge node {} (6)
    edge node {} (14)
	
(6) edge node {} (10)

(7) edge node {} (8)
    edge node {} (15)
		
(8) edge node {} (9)
    edge node {} (16)
		
(9) edge node {} (10)

(10) edge node {} (12)
     		
(11) edge node {} (15)

(12) edge node {} (14)
     edge node {} (16)
		
(14) edge node {} (17)

(15) edge node {} (18)

(16) edge node {} (18)

(17) edge node {} (18)
;
\draw[line width = 3pt] (3.center) -- (7.center) -- (8.center) -- (16.center) -- (12.center) -- (10.center) -- (9.center);
;
\draw[line width = 3pt] (1.center) -- (2.center) -- (11.center) -- (15.center) -- (18.center) -- (17.center) -- (14.center) -- (5.center) -- (6.center);
}
\subcaption*{2.1: 1.4: ((a,d),(b,c))}
\end{subfigure}
\begin{subfigure}{0.24\linewidth}
\tikz[fo/.style={draw, fill=black, circle, minimum size={0.16cm}, inner 
sep=0cm, font=\bf, align=center, scale=0.88},scale=0.5] {
\node [fo, label=left:$d$] (1) at (0,0) {};
\node [fo] (2) at (2,0) {};
\node [fo, label=right:$c$] (3) at (4,0) {};
\node [fo] (4) at (0,2) {};
\node [fo] (5) at (0,4) {};
\node [fo, label=left:$a$] (6) at (0,6) {};
\node [fo] (7) at (4,2) {};
\node [fo] (8) at (4,4) {};
\node [fo] (10) at (2,6) {};
\node [fo] (12) at (2,5) {};
\node [fo] (14) at (1,4) {};
\node [fo] (15) at (3,2) {};
\node [fo] (16) at (3,4) {};
\node [fo] (17) at (1.5,3) {};
\node [fo] (18) at (2.5,3) {};

\draw

(1) edge node  {} (2)
    edge node {} (4)

(2) edge node {} (3)
(3) edge node {} (7)

(4) edge node {} (5)
(5) edge node {} (6)
    edge node {} (14)
	
(6) edge node {} (10)

(7) edge node {} (8)
    edge node {} (15)
		
(8) edge node {} (16)
		
(10) edge node {} (12)
     		
(12) edge node {} (14)
     edge node {} (16)
		
(14) edge node {} (17)

(15) edge node {} (18)

(16) edge node {} (18)

(17) edge node {} (18)
;
\draw[line width = 3pt] (3.center) -- (2.center) -- (1.center) -- (4.center) -- (5.center) -- (14.center) -- (17.center) -- (18.center) -- (15.center) -- (7.center) -- (8.center) -- (16.center) -- (12.center) -- (10.center) -- (6.center);
}
\subcaption*{2.1: 1.5: (a,c)}
\end{subfigure}
\begin{subfigure}{0.24\linewidth}
\tikz[fo/.style={draw, fill=black, circle, minimum size={0.16cm}, inner 
sep=0cm, font=\bf, align=center, scale=0.88},scale=0.5] {
\node [fo, label=left:$d$] (1) at (0,0) {};
\node [fo] (2) at (2,0) {};
\node [fo, label=right:$c$] (3) at (4,0) {};
\node [fo] (4) at (0,2) {};
\node [fo] (5) at (0,4) {};
\node [fo, label=left:$a$] (6) at (0,6) {};
\node [fo] (7) at (4,2) {};
\node [fo] (8) at (4,4) {};
\node [fo, label=right:$b$] (9) at (4,6) {};
\node [fo] (10) at (2,6) {};
\node [fo] (11) at (2,1) {};
\node [fo] (12) at (2,5) {};
\node [fo] (14) at (1,4) {};
\node [fo] (15) at (3,2) {};
\node [fo] (16) at (3,4) {};
\node [fo] (18) at (2.5,3) {};

\draw

(1) edge node  {} (2)
    edge node {} (4)

(2) edge node {} (3)
    edge node {} (11)
		
(3) edge node {} (7)

(4) edge node {} (5)
(5) edge node {} (6)
    edge node {} (14)
	
(6) edge node {} (10)

(7) edge node {} (8)
    edge node {} (15)
		
(8) edge node {} (9)
    edge node {} (16)
		
(9) edge node {} (10)

(10) edge node {} (12)
     		
(11) edge node {} (15)

(12) edge node {} (14)
     edge node {} (16)
		
(15) edge node {} (18)

(16) edge node {} (18)

;
\draw[line width = 3pt] (1.center) -- (4.center) -- (5.center) -- (14.center) -- (12.center) -- (16.center) -- (18.center) -- (15.center) -- (11.center) -- (2.center) -- (3.center) -- (7.center) -- (8.center) -- (9.center) -- (10.center) -- (6.center);
}
\subcaption*{2.1: 1.2: (a,d)}
\end{subfigure}
\begin{subfigure}{0.24\linewidth}
\tikz[fo/.style={draw, fill=black, circle, minimum size={0.16cm}, inner 
sep=0cm, font=\bf, align=center, scale=0.88},scale=0.5] {
\node [fo, label=left:$d$] (1) at (0,0) {};
\node [fo] (2) at (2,0) {};
\node [fo, label=right:$c$] (3) at (4,0) {};
\node [fo] (4) at (0,2) {};
\node [fo] (5) at (0,4) {};
\node [fo, label=left:$a$] (6) at (0,6) {};
\node [fo] (7) at (4,2) {};
\node [fo] (8) at (4,4) {};
\node [fo, label=right:$b$] (9) at (4,6) {};
\node [fo] (10) at (2,6) {};
\node [fo] (11) at (2,1) {};
\node [fo] (12) at (2,5) {};
\node [fo] (13) at (1,2) {};
\node [fo] (14) at (1,4) {};
\node [fo] (15) at (3,2) {};
\node [fo] (16) at (3,4) {};

\draw

(1) edge node  {} (2)
    edge node {} (4)

(2) edge node {} (3)
    edge node {} (11)
		
(3) edge node {} (7)

(4) edge node {} (5)
    edge node {} (13)

(5) edge node {} (6)
    edge node {} (14)
	
(6) edge node {} (10)

(7) edge node {} (8)
    edge node {} (15)
		
(8) edge node {} (9)
    edge node {} (16)
		
(9) edge node {} (10)

(10) edge node {} (12)
     		
(11) edge node {} (13)
     edge node {} (15)

(12) edge node {} (14)
     edge node {} (16)
		
;
\draw[line width = 3pt] (1.center) -- (2.center) -- (3.center) -- (7.center) -- (15.center) -- (11.center) -- (13.center) -- (4.center) -- (5.center) -- (14.center) -- (12.center) -- (16.center) -- (8.center) -- (9.center) -- (10.center) -- (6.center);
}
\subcaption*{2.1: 1.2: (a,d)}
\end{subfigure}
\begin{subfigure}{0.24\linewidth}
\tikz[fo/.style={draw, fill=black, circle, minimum size={0.16cm}, inner 
sep=0cm, font=\bf, align=center, scale=0.88},scale=0.5] {
\node [fo, label=left:$d$] (1) at (0,0) {};
\node [fo] (2) at (2,0) {};
\node [fo, label=right:$c$] (3) at (4,0) {};
\node [fo] (4) at (0,2) {};
\node [fo] (7) at (4,2) {};
\node [fo] (8) at (4,4) {};
\node [fo, label=right:$b$] (9) at (4,6) {};
\node [fo] (10) at (2,6) {};
\node [fo] (11) at (2,1) {};
\node [fo] (12) at (2,5) {};
\node [fo] (13) at (1,2) {};
\node [fo] (14) at (1,4) {};
\node [fo] (15) at (3,2) {};
\node [fo] (16) at (3,4) {};
\node [fo] (17) at (1.5,3) {};
\node [fo] (18) at (2.5,3) {};

\draw

(1) edge node  {} (2)
    edge node {} (4)

(2) edge node {} (3)
    edge node {} (11)
		
(3) edge node {} (7)

(4) edge node {} (13)

(7) edge node {} (8)
    edge node {} (15)
		
(8) edge node {} (9)
    edge node {} (16)
		
(9) edge node {} (10)

(10) edge node {} (12)
     		
(11) edge node {} (13)
     edge node {} (15)

(12) edge node {} (14)
     edge node {} (16)
		
(13) edge node {} (17)

(14) edge node {} (17)

(15) edge node {} (18)

(16) edge node {} (18)

(17) edge node {} (18)
;
\draw[line width = 3pt] (3.center) -- (2.center) -- (1.center) -- (4.center) -- (13.center) -- (11.center) -- (15.center) -- (7.center) -- (8.center) -- (16.center) -- (18.center) -- (17.center) -- (14.center) -- (12.center) -- (10.center) -- (9.center);
;}
\subcaption*{2.2: (b,c)}
\end{subfigure}
\begin{subfigure}{0.24\linewidth}
\tikz[fo/.style={draw, fill=black, circle, minimum size={0.16cm}, inner 
sep=0cm, font=\bf, align=center, scale=0.88},scale=0.5] {
\node [fo, label=left:$d$] (1) at (0,0) {};
\node [fo] (2) at (2,0) {};
\node [fo, label=right:$c$] (3) at (4,0) {};
\node [fo] (4) at (0,2) {};
\node [fo] (5) at (0,4) {};
\node [fo] (7) at (4,2) {};
\node [fo] (8) at (4,4) {};
\node [fo, label=right:$b$] (9) at (4,6) {};
\node [fo] (11) at (2,1) {};
\node [fo] (12) at (2,5) {};
\node [fo] (13) at (1,2) {};
\node [fo] (14) at (1,4) {};
\node [fo] (15) at (3,2) {};
\node [fo] (16) at (3,4) {};
\node [fo] (17) at (1.5,3) {};
\node [fo] (18) at (2.5,3) {};

\draw

(1) edge node  {} (2)
    edge node {} (4)

(2) edge node {} (3)
    edge node {} (11)
		
(3) edge node {} (7)

(4) edge node {} (5)
    edge node {} (13)

(5) edge node {} (14)
	
(7) edge node {} (8)
    edge node {} (15)
		
(8) edge node {} (9)
    edge node {} (16)
		
(11) edge node {} (13)
     edge node {} (15)

(12) edge node {} (14)
     edge node {} (16)
		
(13) edge node {} (17)

(14) edge node {} (17)

(15) edge node {} (18)

(16) edge node {} (18)

(17) edge node {} (18)
;
\draw[line width = 3pt] (3.center) -- (2.center) -- (1.center) -- (4.center) -- (5.center) -- (14.center) -- (12.center) -- (16.center) -- (18.center) -- (17.center) -- (13.center) -- (11.center) -- (15.center) -- (7.center) -- (8.center) -- (9.center);
;}
\subcaption*{2.2: (b,c)}
\end{subfigure}
	\caption{Proof of properties 2.1 and 2.2-2.5.}
	\label{fig:j182.1-2.5}
\end{figure}

\subsection{Counts of \texorpdfstring{\boldmath${K_2}$}{K2}-hypohamiltonian graphs}
\label{app:counts_k2hypo}

\begin{table}[H]
\centering


\caption{The three cubic planar $K_2$-hypohamiltonian graphs on 76 vertices 
(top row) and the three such graphs on 78 vertices (bottom row), respectively. 
}
\label{fig:cubic_planar_76_78}
\end{center}
\end{figure}

\subsection{Extendable 5-cycles}



\begin{figure}[H]
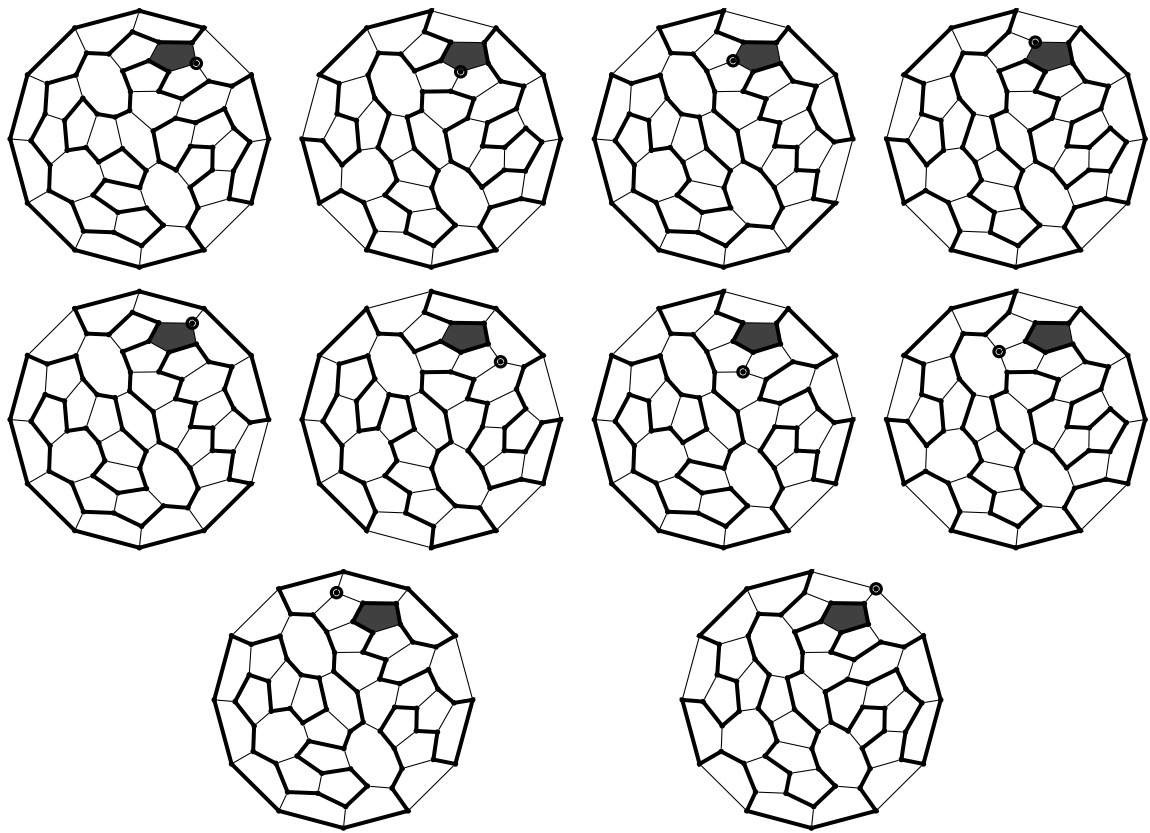

\begin{center}
{
\hfill
}
\caption{A cubic planar $K_2$-hypohamiltonian graph and the proof that it
contains an extendable 5-cycle.}
\label{fig:68-ext-cyc}
\end{center}
\end{figure}


\begin{thebibliography}{99}

\bibitem{ABFLZ14}
S. A. van Aardt, A. P. Burger, M. Frick, B. Llano, and R. Zuazua.
Infinite families of 2-hypohamiltonian/2-hypotraceable oriented graphs.
\emph{Graphs Combin.} \textbf{30} (2014) 783--800.

\bibitem{AMW97}
R. E. L. Aldred, B. D. McKay, and N. C. Wormald.
Small Hypohamiltonian Graphs.
\emph{J. Combin. Math. Combin. Comput.} \textbf{23} (1997) 143--152.

\bibitem{AW11}
M. Araya and G. Wiener.
On cubic planar hypohamiltonian and hypotraceable graphs.
\emph{Electron. J. Combin.} \textbf{18} (2011) \#P85.

\bibitem{Bo72}
J. A. Bondy.
Variations on the Hamiltonian Theme.
\emph{Canad. Math. Bull.} \textbf{15} (1972) 57--62.


\bibitem{BGHM13}
G. Brinkmann, J. Goedgebeur, J. H\"agglund, and K. Markstr\"om.
Generation and properties of Snarks.
\emph{J. Combin. Theory, Ser. B} \textbf{103} (2013) 468--488.

\bibitem{BG17}
G. Brinkmann and J. Goedgebeur.
Generation of cubic graphs and snarks with large girth.
\emph{J. Graph Theory} \textbf{86} (2017) 255--272.

\bibitem{BGM11}
G. Brinkmann, J. Goedgebeur, and B. D. McKay.
Generation of Cubic graphs.
\emph{Discrete Math. Theor. Comput. Sci.} \textbf{13} (2011)
69-80.

\bibitem{BGM22}
G. Brinkmann, J. Goedgebeur, and B. D. McKay.
The Minimality of the Georges--Kelmans Graph.
\emph{Math. Comp.} \textbf{91} (2022) 1483--1500.

\bibitem{BM07}
G. Brinkmann and B. D. McKay.
Fast generation of planar graphs.
\emph{MATCH Commun. Math. Comput. Chem.} {\bf 58} (2007) 323--357.

\bibitem{Ch73}
V. Chv\'atal.
Flip-Flops in Hypohamiltonian Graphs.
\emph{Canad. Math. Bull.} \textbf{16} (1973) 33--41.

\bibitem{CS77}
J. B. Collier and E. F. Schmeichel.
New flip-flop constructions for hypohamiltonian graphs.
\emph{Discrete Math.} \textbf{18} (1977) 265--271.

\bibitem{CDG23}
K. Coolsaet, S. D'hondt, and J. Goedgebeur.
House of Graphs 2.0: a database of interesting graphs and more.
\emph{Discret. Appl. Math.} \textbf{325} (2023) 97--107.
Available at \url{https://houseofgraphs.org}.

\bibitem{graph_theory_diestel}
R. Diestel.
\newblock {\em Graph Theory}, volume 173 of {\em Graduate Texts in
  Mathematics},
\newblock Springer, Heidelberg, fifth edition, 2017.

\bibitem{FY74}
G. B. Faulkner and D. H. Younger.
Non-hamiltonian cubic planar maps.
\emph{Discrete Math.} \textbf{7} (1974) 67--74.

\bibitem{GMZ20}
J. Goedgebeur, B. Meersman, and C. T. Zamfirescu.
Graphs with few Hamiltonian Cycles.
\emph{Math. Comp.} \textbf{89} (2020) 965--991.

\bibitem{GZ17}
J. Goedgebeur and C. T. Zamfirescu.
Improved bounds for hypohamiltonian graphs.
\emph{Ars Math. Contemp.} \textbf{13} (2017) 235--257.

\bibitem{GZ18}
J. Goedgebeur and C. T. Zamfirescu.
On hypohamiltonian snarks and a theorem of Fiorini.
\emph{Ars Math. Contemp.} \textbf{14}(2) (2018) 227--249.

\bibitem{GGL95}
R. L. Graham, M. Gr\"otschel, and L. Lov\'asz (editors).
Handbook of Combinatorics, Vol.~1.
Elsevier, 1995.

\bibitem{Gr68}
E. J. Grinberg.
Plane homogeneous graphs of degree three without Hamiltonian circuits.
\emph{Latvian Math. Yearbook, Izdat. Zinatne, Riga}  \textbf{4} (1968) 51--58. (In Russian)\\
English Translation by D.~Zeps: \url{https://arxiv.org/abs/0908.2563}

\bibitem{Gr74}
B. Gr\"{u}nbaum. Vertices Missed by Longest Paths or Circuits.
\emph{J. Combin. Theory, Ser. A} \textbf{17} (1974) 31--38.

\bibitem{Ho93}
D. A. Holton.
Two open problems in graph theory.
\emph{New Zealand J. Math.} \textbf{22} (1993) 67--78.

\bibitem{HS93}
D. A. Holton and J. Sheehan.
The Petersen Graph, Chapter 7: Hypohamiltonian graphs, Cambridge University Press, New York, 1993.

\bibitem{Is75}
R. Isaacs.
Infinite families of nontrivial trivalent graphs which are not Tait colorable.
\emph{Amer. Math. Monthly} \textbf{82}(3) (1975) 221--239.

\bibitem{JMOPZ17}
M. Jooyandeh, B. D. McKay, P. R. J. \"{O}sterg{\aa}rd, V. H. Pettersson, and C.~T.~Zamfirescu.
Planar hypohamiltonian graphs on 40 vertices.
\emph{J. Graph Theory} \textbf{84} (2017) 121--133.

\bibitem{Re22}
J. Goedgebeur, J. Renders, G. Wiener, and C. T. Zamfirescu.
K2-Hamiltonian Graphs (Version 1) [Computer software]. 
\url{https://github.com/JarneRenders/K2-Hamiltonian-Graphs}


\bibitem{KKPS89}
G. O. H. Katona, A. Kostochka, J. Pach, and B. S. Stechkin.
Locally Hamiltonian graphs.
\emph{Math. Notes Acad. Sci. USSR} \textbf{45} (1989) 25--29. (36--42 in the
original Math. Zametki \textbf{45}.)

\bibitem{M17}
B. D. McKay.
Hypohamiltonian cubic planar graphs with girth 5.
\emph{J. Graph Theory} \textbf{85} (2017) 7--11.

\bibitem{MP14}
B. D. McKay and A. Piperno.
Practical graph isomorphism, II.
\emph{J. Symb. Comput.} \textbf{60} (2014) 94--112.




\bibitem{Ne73}
D. A. Nelson.
Hamiltonian graphs.
M.Sc.~Thesis, Vanderbilt University, Nashville, Tennessee, U.S.A. (1973).

\bibitem{Oz19}
K. Ozeki.
Personal communication, May 2019.

\bibitem{Sa96}
D. P. Sanders.
On Hamiltonian cycles in certain planar graphs.
\emph{J. Graph Theory} \textbf{21} (1996) 43--50.


\bibitem{TY94}
R. Thomas and X. Yu.
4-Connected Projective-Planar Graphs are Hamiltonian.
\emph{J. Combin. Theory, Ser. B} \textbf{62} (1994) 114--132.

\bibitem{Th78-2}
A. G. Thomason.
Hamiltonian Cycles and Uniquely Edge Colourable Graphs.
\emph{Ann. Discrete Math.} \textbf{3} (1978) 259--268.

\bibitem{Th76}
C. Thomassen.
Planar and infinite hypohamiltonian and hypotraceable graphs.
\emph{Discrete Math.} \textbf{14} (1976) 377--389.

\bibitem{Th78}
C. Thomassen.
Hypohamiltonian graphs and digraphs.
Theory and Applications of Graphs, \emph{Lecture Notes Math.} \textbf{642}, Springer, Berlin (1978) 557--571.

\bibitem{Th81}
C. Thomassen.
Planar cubic hypohamiltonian and hypotraceable graphs.
\emph{J. Combin. Theory, Ser. B} \textbf{30} (1981) 36--44.

\bibitem{Tu56}
W. T. Tutte.
A theorem on planar graphs.
\emph{Trans. Amer. Math. Soc.} \textbf{82} (1956) 99--116.

\bibitem{Wi18}
G. Wiener.
New constructions of hypohamiltonian and hypotraceable graphs.
\emph{J. Graph Theory} \textbf{87} (2018) 526--535.

\bibitem{WA11}
G. Wiener and M. Araya.
On planar hypohamiltonian graphs.
\emph{J. Graph Theory} \textbf{67} (2011) 55--68.

\bibitem{Za15}
C. T. Zamfirescu.
Hypohamiltonian and almost hypohamiltonian graphs.
\emph{J. Graph Theory} \textbf{79} (2015) 63--81.

\bibitem{Za21}
C. T. Zamfirescu.
$K_2$-Hamiltonian Graphs: I.
\emph{SIAM J. Discrete Math.} \textbf{35} (2021) 1706--1728.

\end{thebibliography}
\end{document}